\newtheorem{define}{Definition}[section]
\newtheorem{pro}{Proposition}[section]
\newtheorem{Lem}{Lemma}[section]
\newtheorem{cor}{Corollary}[section]
\newtheorem{remark}{Remark}[section]
\theoremstyle{plain} \newtheorem{thm}{Theorem}[section]
\newcommand{\eqdef}{\stackrel{\mathrm{def}}{=}}   
\newcommand{\dotV}{\overset{\bullet}{V}}
\let\div\undefined
\DeclareMathOperator{\div}{div}
\DeclareMathOperator{\dist}{dist}
\def\R{\mathbb R}
\def\N{\mathbb N}
\begin{document}
\title{A Picone identity for variable exponent operators and applications.}
\author{
{ \sc Rakesh Arora, Jacques Giacomoni, Guillaume Warnault} \\
{\sc\footnotesize  LMAP (UMR E2S-UPPA CNRS 5142) Bat. IPRA,}\\
{\sc\footnotesize    Avenue de l'Universit\'e }\\
{\sc\footnotesize   64013 Pau, France}
\\
 {\sc\footnotesize  e-mail: rakesh.arora@univ-pau.fr},
{\sc\footnotesize  jacques.giacomoni@univ-pau.fr},\\
{\sc\footnotesize  guillaume.warnault@univ-pau.fr}\\
} 
\maketitle
\begin{abstract} In this work, we establish a new Picone identity for anisotropic quasilinear operators, such as the $p(x)$-Laplacian defined as $\mbox{div}(|\nabla u|^{p(x)-2} \nabla u).$ Our extension  provides a new version of the  Diaz-Saa inequality and new uniqueness results to some quasilinear elliptic equations with variable exponents. This new Picone identity can be also used to prove some accretivity property to a class of fast diffusion equations involving variable exponents.  Using this, we prove for this class of parabolic equations a new weak comparison principle.
\end{abstract}


\vspace{1cm}
\section{Introduction and main results}
The main aim of this paper is to prove a new version of the Picone identity involving quasilinear elliptic operators with variable exponent. The Picone identity is already known for homogeneous quasilinear elliptic as $p$-Laplacian with $1<p<\infty$.\\
In \cite{*11}, M. Picone considers the homogeneous second order linear differential system 
\begin{equation*}
     \left\{
         \begin{alignedat}{2} 
             {} (a_1(x) u')' + a_2(x) u
             & {}= 0
             \\
            (b_1(x) v')' + b_2(x) v & {}= 0
          \end{alignedat}
     \right.
\end{equation*}
and proved for differentiable functions $u ,v \neq 0$ the pointwise relation:
\begin{equation}\label{piconeder}
    \bigg(\dfrac{u}{v}(a_1 u'v- b_1 uv')\bigg)'= (b_2-a_2) u^2 + (a_1-b_1)u'^2+ b_1 \bigg(u'- \dfrac{v'u}{v}\bigg)^2
\end{equation}
and in \cite{*12}, extended \eqref{piconeder} to the Laplace operator,
{\it i.e.} for differentiable functions $u \geq 0 , v>0$ one has
\begin{equation}\label{piconelap}
    |\nabla u|^2 + \dfrac{u^2}{v^2} |\nabla v|^2 - 2 \dfrac{u}{v} \nabla u. \nabla v = |\nabla u|^2 - \nabla \bigg(\dfrac{u^2}{v}\bigg). \nabla v \geq 0.
\end{equation}
In \cite{*13}, Allegretto and Huang extended \eqref{piconelap} to the $p$-Laplacian operator with $1<p<\infty$. Precisely,  for differentiable functions  $v >0$ and $u \geq 0$ we have
\begin{equation*}
|\nabla u|^p -  |\nabla v|^{p-2} \nabla v. \nabla\bigg(\dfrac{u^p}{v^{p-1}} \bigg) \geq 0.
\end{equation*}
Picone identity plays an important role for proving qualitative properties of differential operators. In this regard, various attempts have been made to generalize Picone identity for different types of differential equations. At the same time, the study of differential equations and variational problems with variable exponents are getting more and more attention. Indeed, the mathematical problems related to nonstandard $p(x)$-growth conditions are connected to many different areas as the nonlinear elasticity theory and non-Newtonian fluids models (see \cite{*22,*15}). In particular the importance of investigating these kinds of problems lies in modelling various anisotropic features that occur in electrorheological fluids models, image restoration \cite{*14}, filtration process in complex media, stratigraphy problems \cite{*28} and heterogeneous biological interactions \cite{*27}. The mathematical framework to deal with these problems are the generalized Orlicz Space $ L^{p(x)}(\Omega)$ and the generalized Orlicz-Sobolev Space $W^{1,p(x)}(\Omega)$. We refer to \cite{*6,*2,*3,*10, VR, KR} for the existence and regularity of minimizers in variational problems. 

In \cite{*19,*17}, several applications of Picone-type identity for $p(\cdot)=\mathrm{constant}$ case have been obtained. This original identity is not further applicable for differential equations with $p(x)$-growth conditions. So, it is relevant to establish a new version of the Picone identity to include a large class of nonstandard $p(x)$-growth problems. In \cite{*10,GTW,*15}  convexity arguments to homogeneous functionals have been used to deal with quasilinear elliptic and parabolic problems with variable exponents. In the present paper,  taking advantage of our new  Picone pointwise  identity, we give further applications in the context of elliptic and parabolic problems.

Before giving the statement of our main results, we first introduce notations and function spaces. Let $\Omega \subset \mathbb{R}^N, N \geq 1$. 
We recall some definitions of variable exponent Lebesgue and Sobolev spaces. Let $\mathcal{P}(\Omega)$ be the set of all measurable function $p: \Omega \rightarrow [1,\infty[$ in $N$-dimensional Lebesgue measure. Define
$$\rho_p(u) \eqdef \int_{\Omega}|u|^{p(x)}~dx.$$
$$L^{p(x)}(\Omega) = \{u:\Omega \to \mathbb{R} \ | \ u \text{\ is measurable and } \rho_p(u) < \infty \}$$ endowed with the norm
$$\|u\|_{L^{p(x)}} =\inf\{\sigma >0 : \rho_p\bigg(\frac{u}{\sigma}\bigg)\leq 1 \}.$$
The corresponding Sobolev space is defined as follows:
 $$W^{1,p(x)}(\Omega) = \{ u \in L^{p(x)}(\Omega) : |\nabla u| \in L^{p(x)}(\Omega)\}$$ endowed with the norm
$$\|u\|_{W^{1,p(x)}} = \|u\|_{L^{p(x)}} + \|\nabla u\|_{L^{p(x)}}$$ and $ W_0^{1,p(x)}(\Omega) = W^{1,1}_0(\Omega)\cap{W^{1,p(x)}(\Omega)}.$\\
In the sequel, we assume that $\Omega$ satisfies:
\begin{itemize}
\item[${(\mathbf \Omega)}$] For $N=1$, $\Omega$ is a bounded open interval and for $N \geq 2$, $\Omega$ is a bounded domain whose the boundary $\partial \Omega$ is a compact manifold of class $C^{1, \gamma}$ for some $\gamma \in (0,1)$ and satisfies the interior sphere condition at every point of $\partial \Omega$.
\end{itemize}
Throughout the paper, we also assume that $p \in C^1(\overline{\Omega})$.  In addition, we suppose that $$1< p_{-} \eqdef  \min_{\overline{\Omega}} p(x) \leq p(x) \leq p_{+}\eqdef \max_{\overline{\Omega}} p(x) < \infty.$$ Then, $W_0^{1,p(x)}(\Omega) =\overline{C^\infty_0(\Omega)}^{W^{1,p(x)}(\Omega)}$.
We also recall some well-known properties on $L^{p(x)}$ spaces (see \cite{KR}).
\begin{pro}\label{P1}
Let $p\in L^\infty(\Omega)$. 
Then for any $u\in L^{p(x)}(\Omega)$ we have:
\begin{enumerate}[label={\it(\roman*)}]
\item $\rho_p(u/\|u\|_{L^{p(x)}})=1$.
\item $\|u\|_{L^{p(x)}}\rightarrow 0$\; \text{if and only if}\;  $\rho_p(u)\rightarrow 0$.
\item $L^{p_c(x)}(\Omega)$ is the dual space of $L^{p(x)}(\Omega)$ where we denote by $p_c$ the conjugate exponent of $p$ defined as
$$p_c(x)=\frac{p(x)}{p(x)-1}.$$
 \end{enumerate}
\end{pro}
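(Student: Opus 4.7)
My plan is to treat the three items separately, in each case appealing to the standard machinery of Musielak--Orlicz spaces. The starting point for (i) is to observe that, for fixed $u\in L^{p(x)}(\Omega)\setminus\{0\}$, the modular
\begin{equation*}
\Phi(\sigma)\eqdef\rho_p(u/\sigma)=\int_{\Omega}|u(x)|^{p(x)}\sigma^{-p(x)}\,dx
\end{equation*}
is continuous on $(0,\infty)$ by Lebesgue's dominated convergence theorem, strictly decreasing, and satisfies $\lim_{\sigma\to 0^+}\Phi(\sigma)=+\infty$ together with $\lim_{\sigma\to\infty}\Phi(\sigma)=0$. There is therefore a unique $\sigma^\star>0$ with $\Phi(\sigma^\star)=1$; by monotonicity and the definition of the Luxemburg norm one has $\|u\|_{L^{p(x)}}=\sigma^\star$, and evaluating $\Phi$ at this value yields (i).

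For (ii), the idea is to combine (i) with a direct comparison between norm and modular. Setting $\sigma\eqdef\|u\|_{L^{p(x)}}$, splitting $\Omega$ according to whether $|u(x)|/\sigma\geq 1$ or not, and using $\rho_p(u/\sigma)=1$ from (i), I would deduce
\begin{equation*}
\min(\sigma^{p_-},\sigma^{p_+})\leq\rho_p(u)\leq\max(\sigma^{p_-},\sigma^{p_+}).
\end{equation*}
Both implications of (ii) then follow: the forward one by letting $\sigma\to 0$, and the converse by noting that $\rho_p(u)<1$ forces $\sigma<1$ (thanks to (i)) and inverting the upper bound.

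The bulk of the argument is for (iii). Young's inequality $ab\leq a^{p(x)}/p(x)+b^{p_c(x)}/p_c(x)$, integrated over $\Omega$ and combined with (i), produces the variable-exponent Hölder inequality $\int_\Omega|uv|\,dx\leq 2\|u\|_{L^{p(x)}}\|v\|_{L^{p_c(x)}}$, and hence a continuous embedding $L^{p_c(x)}(\Omega)\hookrightarrow(L^{p(x)}(\Omega))^\ast$. The nontrivial direction is surjectivity: given $\Lambda\in(L^{p(x)}(\Omega))^\ast$, I would set $\nu(E)\eqdef\Lambda(\chi_E)$ on measurable $E\subset\Omega$, check countable additivity and absolute continuity with respect to Lebesgue measure (using boundedness of $\Lambda$ together with (ii) to control null sets), apply the Radon--Nikodym theorem to obtain $g$ with $\Lambda(\chi_E)=\int_E g\,dx$, and extend by linearity and density from simple functions to all of $L^{p(x)}(\Omega)$. \textbf{The main obstacle} I anticipate is the final step: showing that the resulting $g$ lies in $L^{p_c(x)}(\Omega)$ with norm controlled by $\|\Lambda\|$. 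In the constant-exponent case one uses truncations together with explicit homogeneity computations and test functions of the form $|g|^{p_c-2}g$; the variable-exponent setting breaks this homogeneity, so the test function must be carefully truncated and normalised using the modular/norm comparison from (i)--(ii) to recover the correct bound.
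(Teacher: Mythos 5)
The paper does not actually prove Proposition \ref{P1}: it is recalled as a known fact and cited to \cite{KR} (see also \cite{*6}), so there is no in-paper argument to compare with; your attempt has to stand on its own. Parts (i) and (ii) do: the continuity/strict monotonicity argument for $\Phi(\sigma)=\rho_p(u/\sigma)$ correctly identifies $\|u\|_{L^{p(x)}}$ as the unique $\sigma$ with $\Phi(\sigma)=1$ (for $u\not\equiv 0$; the case $u\equiv 0$ is tacitly excluded), and your two-sided bound $\min(\sigma^{p_-},\sigma^{p_+})\leq\rho_p(u)\leq\max(\sigma^{p_-},\sigma^{p_+})$ with $\sigma=\|u\|_{L^{p(x)}}$ is exactly the pair \eqref{a}--\eqref{b} that the paper itself deduces from (i), and it yields both implications in (ii). Both steps use $p\in L^\infty$, i.e.\ $p_+<\infty$, which is available.

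Part (iii), however, is a plan rather than a proof, and the step you flag as ``the main obstacle'' is precisely where the content of the theorem lies; as written, (iii) is incomplete. The good news is that the repair you anticipate is the right one and can be made concrete. Assume $p_->1$ (needed for (iii) as stated, since otherwise $p_c$ is unbounded; this is the paper's standing hypothesis), let $g\in L^1(\Omega)$ be the Radon--Nikodym density, set $g_n\eqdef\min(|g|,n)$ and $s_n\eqdef g_n^{\,p_c(x)-1}\operatorname{sgn}g$, which is bounded on the bounded set $\Omega$, hence lies in $L^{p(x)}(\Omega)$; the identity $\Lambda(s_n)=\int_\Omega s_n g\,dx$ follows by approximating $s_n$ by simple functions dominated by it (density of simple functions and (ii) give convergence in $L^{p(x)}$, dominated convergence handles the integral). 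Since $|g|\geq g_n$ and $(p_c(x)-1)p(x)=p_c(x)$, one gets
\begin{equation*}
\rho_{p_c}(g_n)\leq\int_\Omega s_n g\,dx=\Lambda(s_n)\leq\|\Lambda\|\,\|s_n\|_{L^{p(x)}}\leq\|\Lambda\|\max\bigl\{\rho_p(s_n)^{1/p_-},\rho_p(s_n)^{1/p_+}\bigr\},\qquad \rho_p(s_n)=\rho_{p_c}(g_n),
\end{equation*}
so absorbing gives $\rho_{p_c}(g_n)\leq\max\{1,\|\Lambda\|^{p_-/(p_--1)}\}$ uniformly in $n$; monotone convergence then yields $g\in L^{p_c(x)}(\Omega)$ with $\|g\|_{L^{p_c(x)}}$ controlled by $\|\Lambda\|$ via \eqref{a}--\eqref{b}. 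This, together with your H\"older embedding (whose use of (i) for the exponent $p_c$ again needs $p_->1$) and the countable additivity of $\nu$ (which indeed follows from (ii) since $\rho_p(\chi_E)=|E|$), closes the argument; until it is written out, the duality claim is not yet established.
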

\noindent Proposition \ref{P1} {\it (i)} implies that: if $\|u\|_{L^{p(x)}}\geq 1$,
\begin{equation}\label{a}
\|u\|_{L^{p(x)}}^{p_{-}}\leq \rho_p(u)\leq\|u\|_{L^{p(x)}}^{p_{+}}
\end{equation}
and if $\|u\|_{L^{p(x)}}\leq 1$
\begin{equation}\label{b}
\|u\|_{L^{p(x)}}^{p_{+}}\leq \rho_p(u)\leq\|u\|_{L^{p(x)}}^{p_{-}}.
\end{equation}
Moreover, we have also the generalized H\"older inequality: for $p$ measurable function in $\Omega$, there exists a constant $C=C(p_+,p_-)\geq 1$ such that for any $f\in L^{p(x)}(\Omega)$ and $g\in L^{p_c(x)}(\Omega)$
\begin{equation}\label{hi}
 \int_\Omega |f(x)g(x)|~dx\leq C\|f\|_{L^{p(x)}}\|g\|_{L^{p_c(x)}}.
 \end{equation}
 
 In Section 2, we prove the Picone identity for a general class of nonlinear operator. More precisely, we consider a continuous operator $A:\  \Omega \times \mathbb{R}^N \to \mathbb{R}$ such that $(x,\xi) \rightarrow A(x, \xi)$ is differentiable with respect to variable $\xi $ and satisfies:
\begin{itemize}
     \item[$(A1)$] $\xi \to A(x,\xi)$ is positively $p(x)$-homogeneous {\it i.e.} $A(x,t \xi)= t^{p(x)} A(x,\xi),\ \forall\ t \\ \in \mathbb{R}^+,\  \xi \in \mathbb{R}^N \text{\ and a.e.\ } x \in \Omega.$
      \item[$(A2)$]   $\xi \to A(x,\xi) \ \text{is strictly convex}$ for any $x \in \Omega.$
\end{itemize}
\begin{remark}
From the assumptions of $A$, we deduce $A(x,\xi) > 0$ for $\xi \neq 0$ and the symmetry $A(x, \xi)= A(x, -\xi)$ for any $x \in \Omega$ and $\xi \in \mathbb{R}^N.$ 
\end{remark} 
By using  the convexity and  the $p(x)$-homogeneity of the operator $A$, we prove the following extension of the Picone identity: 

 \begin{thm}[Picone identity]\label{picone}
Let $A: \Omega \times \mathbb{R}^N \to \mathbb{R}$ is a continuous and differentiable function satisfying $(A1)$ and $(A2)$. Let $v_0,v \in L^\infty(\Omega)$ belonging to $\dot{V}_+^r \eqdef \{v:\Omega \to (0,+\infty) \ | \ v^{\frac1r} \in W^{1,p(x)}_0(\Omega)\}$ for  some $r \geq 1$. Then
\begin{equation*}
\dfrac{1}{p(x)} \bigg< \partial_\xi A(x,\nabla v_0^{1/r}),\nabla\bigg({\dfrac{v}{v_0^{(r-1)/r}}}\bigg) \bigg> \leq A^{\frac{r}{p(x)}}(x,\nabla v^{1/r}) \  A^{\frac{(p(x)-r)}{p(x)}}(x,\nabla v_0^{1/r})
\end{equation*}
where $\langle .,. \rangle$ is the inner scalar product and the above inequality is strict if $r >1$ or $\frac{v}{v_0} \not\equiv \mathrm{Const}>0.$ 
 \end{thm}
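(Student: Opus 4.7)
The plan is to reduce the inequality to a combination of convexity of $A^{1/p(x)}$ with a weighted AM--GM inequality, after a convenient change of variables. Setting $w=v^{1/r}$ and $w_{0}=v_{0}^{1/r}$, I would introduce the companion function $B(x,\xi)\eqdef A(x,\xi)^{1/p(x)}$. Assumption $(A1)$ makes $B(x,\cdot)$ positively $1$-homogeneous, and together with $(A2)$ and $p(x)>1$ it is convex; it is differentiable for $\xi\neq 0$ since $A$ is.

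First I would differentiate the two homogeneity relations to obtain the Euler identities
\[
\langle \partial_\xi A(x,\xi),\xi\rangle = p(x)\,A(x,\xi),\qquad \langle \partial_\xi B(x,\xi),\xi\rangle = B(x,\xi),
\]
together with the chain-rule relation $\partial_\xi A = p(x)\,B^{p(x)-1}\,\partial_\xi B$. Next I would expand
\[
\nabla\!\left(\frac{w^{r}}{w_{0}^{r-1}}\right) = r\!\left(\frac{w}{w_{0}}\right)^{\!r-1}\!\nabla w \;-\; (r-1)\!\left(\frac{w}{w_{0}}\right)^{\!r}\!\nabla w_{0}
\]
and apply the second Euler identity to the $\nabla w_{0}$ term. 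This rewrites the left-hand side of the target inequality as
\[
B(x,\nabla w_{0})^{p(x)-1}\Bigl[\,r(w/w_{0})^{r-1}\,\langle \partial_\xi B(x,\nabla w_{0}),\nabla w\rangle \;-\; (r-1)(w/w_{0})^{r}\,B(x,\nabla w_{0})\,\Bigr].
\]

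Then I would use two ingredients: (i) the subgradient inequality for the convex function $B(x,\cdot)$, which combined with Euler's identity gives $\langle \partial_\xi B(x,\nabla w_{0}),\nabla w\rangle \leq B(x,\nabla w)$; and (ii) the weighted AM--GM inequality $r\,u\,v^{r-1} \leq u^{r} + (r-1)\,v^{r}$ valid for all $r\geq 1$ and $u,v\geq 0$. With $u = B(x,\nabla w)$ and $v = (w/w_{0})\,B(x,\nabla w_{0})$, the bracket above is dominated by $u^{r}\,B(x,\nabla w_{0})^{1-r}$, and the upper bound collapses to
\[
B(x,\nabla w_{0})^{p(x)-r}\,B(x,\nabla w)^{r} \;=\; A^{(p(x)-r)/p(x)}(x,\nabla w_{0})\,A^{r/p(x)}(x,\nabla w),
\]
which is exactly the desired right-hand side.

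For the strict-inequality criterion, I would trace the two equality cases: AM--GM equality (nontrivial only for $r>1$) forces $B(x,\nabla w)\,w_{0} = w\,B(x,\nabla w_{0})$, while strict convexity of $A(x,\cdot)$ implies that $B(x,\cdot)$ is strictly convex transverse to rays through the origin, so the subgradient inequality is strict unless $\nabla w$ is a nonnegative multiple of $\nabla w_{0}$. Combining the two conditions yields $\nabla(w/w_{0})\equiv 0$, i.e.\ $v/v_{0}\equiv \mathrm{const}$. The main technical obstacle I expect is handling the null set $\{\nabla w_{0}=0\}$, on which $B$ need not be smooth, and correctly deducing the cross-ray strict convexity of the $1$-homogeneous $B$ from the strict convexity of $A$ on $\mathbb{R}^{N}$.
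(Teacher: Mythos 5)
Your argument is correct, but it follows a genuinely different route from the paper. The paper first proves (Proposition \ref{1}) that $N_r(x,\xi)=A(x,\xi)^{r/p(x)}$ is positively $r$-homogeneous and strictly convex for $r>1$ (ray-strictly for $r=1$), and then obtains the identity in one stroke by writing the gradient inequality for $N_r$ at the rescaled points $\xi/v$, $\xi_0/v_0$ and using Euler's identity and the $(r-1)$-homogeneity of $\partial_\xi N_r$; the power $r$ is absorbed into the convexity of $N_r$ itself. You instead work only with the $1$-homogeneous gauge $B=A^{1/p(x)}$, expand $\nabla(w^r/w_0^{r-1})$ explicitly, and split the estimate into the generalized Cauchy--Schwarz inequality $\langle\partial_\xi B(x,\nabla w_0),\nabla w\rangle\le B(x,\nabla w)$ plus the scalar Young/AM--GM inequality $r\,uv^{r-1}\le u^r+(r-1)v^r$ -- this is the classical Allegretto--Huang mechanism transported to the anisotropic setting, and your equality analysis (AM--GM equality plus equality in the subgradient inequality forcing $\nabla w=(w/w_0)\nabla w_0$, i.e.\ $\nabla(v/v_0)=0$) matches the equality case identified in the paper's proof. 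What the paper's route buys is that the convexity of $N_r$ is exactly what is reused later (Proposition \ref{1bis}, Diaz--Saa); what your route buys is a more elementary, two-ingredient pointwise computation in which the role of $r$ is isolated in a scalar inequality.

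One point you should not leave as an assertion: the convexity (and, for the strictness, the ray-strict convexity) of the $1$-homogeneous function $B(x,\cdot)=A(x,\cdot)^{1/p(x)}$ is not a formal consequence of $(A1)$--$(A2)$ by composition, since $t\mapsto t^{1/p(x)}$ is concave; it is precisely the $r=1$ case of the paper's Proposition \ref{1}, proved there by the Minkowski-gauge argument (quasiconvexity of $B$ from convexity of $A$ and positivity off the origin, then $1$-homogeneity gives subadditivity, hence convexity, with the ray-strict refinement tracked through). Your sketch correctly flags the cross-ray strictness and the set $\{\nabla w_0=0\}$ (where $\partial_\xi A(x,0)=0$ by $(p(x)-1)$-homogeneity, so the inequality holds trivially, as it implicitly does in the paper) as the technical points; filling in the gauge argument for $B$ is the only substantive addition needed to make your proof complete.
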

\noindent From the above Picone identity, we can show an extension of the famous Diaz-Saa inequality to the class of operators with variable exponent. This inequality is strongly linked to the strict convexity of some associated homogeneous energy type functional. 
 \begin{thm}[Diaz-Saa inequality]{\label{Diaz-Saa}}
Let $A: \Omega \times \mathbb{R}^N \to \mathbb{R}$ is a continuous and differentiable function satisfying $(A1)$ and $(A2)$ and define  $a(x,\xi)=(a_i(x,\xi))_i \eqdef  \left(\dfrac{1}{p(x)} \partial_{\xi_i} A(x,\xi)\right)_i$. Assume in addition that there exists $\Lambda >0$ such that \\ 
\begin{equation*}
\begin{split}
a \in C^1(\Omega\times (\R^N\backslash\{0\}))^N\ \  \text{and} \ \ \sum_{i,j=1}^N \left|\dfrac{\partial a_i(x,\xi)}{\partial \xi_j}\right| \leq \Lambda |\xi|^{p(x)-2}
\end{split}
\end{equation*}
for all $(x,\xi)\in \Omega\times\R^N\backslash\{0\}$. Then, we have in the sense of distributions, for any $r \in [1,p_-]$ 
\begin{equation}\label{*}
   \int_\Omega  \bigg(- \frac{\div(a(x,\nabla w_1))}{w_1^{r-1}} + \frac{\div (a(x,\nabla w_2))}{w_2^{r-1}}\bigg) (w_1^r-w_2^r) ~dx \geq 0
\end{equation}
 for any $w_1,w_2 \in W_0^{1,p(x)}(\Omega)$, positive in $\Omega$ such that $\dfrac{w_1}{w_2},\dfrac{w_2}{w_1} \in L^{\infty}(\Omega).$
 Moreover, if the equality occurs in \eqref{*}, then  $w_1/w_2$ is constant in $\Omega$.
 If $p(x) \not\equiv r$ in  $\Omega$ then even $w_1=w_2$ holds in $\Omega.$
 \end{thm}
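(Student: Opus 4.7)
My plan is to apply the Picone identity of Theorem \ref{picone} twice, in the two symmetric forms obtained by exchanging the roles of $w_1$ and $w_2$, and then combine the resulting estimates via Young's inequality and a distributional integration by parts so that the cross pairings collapse against the diagonal ones.

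Taking $v_0=w_1^{r}$ and $v=w_2^{r}$ (admissible elements of $\dotV_+^r$ by the hypotheses on $w_1,w_2$), the substitutions $v_0^{1/r}=w_1$ and $v/v_0^{(r-1)/r}=w_2^{r}/w_1^{r-1}$, together with $a=\frac{1}{p(x)}\partial_\xi A$, turn Theorem \ref{picone} into
\begin{equation*}
\bigl\langle a(x,\nabla w_1),\nabla\phi_1\bigr\rangle\leq A^{r/p(x)}(x,\nabla w_2)\,A^{(p(x)-r)/p(x)}(x,\nabla w_1),\qquad \phi_1:=\frac{w_2^{r}}{w_1^{r-1}}.
\end{equation*}
Since $r\leq p_-\leq p(x)$, Young's inequality with conjugate exponents $p(x)/r$ and $p(x)/(p(x)-r)$ linearizes the right-hand side as
\begin{equation*}
\bigl\langle a(x,\nabla w_1),\nabla\phi_1\bigr\rangle\leq\frac{r}{p(x)}A(x,\nabla w_2)+\frac{p(x)-r}{p(x)}A(x,\nabla w_1),
\end{equation*}
and exchanging indices produces the companion bound for $\phi_2:=w_1^{r}/w_2^{r-1}$.

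Next I would expand the integrand of \eqref{*} into four terms and interpret each as a distributional pairing of $\div(a(x,\nabla w_i))$ against a test function in $W_0^{1,p(x)}(\Omega)$. The diagonal pairings simplify, through Euler's identity $\langle a(x,\xi),\xi\rangle=A(x,\xi)$ (a direct consequence of the $p(x)$-homogeneity of $A$), to
\begin{equation*}
-\int_\Omega\div(a(x,\nabla w_i))\,w_i\,dx=\int_\Omega A(x,\nabla w_i)\,dx,
\end{equation*}
whereas the cross pairings equal $\int_\Omega\langle a(x,\nabla w_i),\nabla\phi_i\rangle\,dx$ and are bounded above by the two linearized Picone inequalities. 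Summing the bounds, the factors $\frac{r}{p(x)}+\frac{p(x)-r}{p(x)}=1$ collapse the right-hand side to $\int_\Omega\bigl(A(x,\nabla w_1)+A(x,\nabla w_2)\bigr)\,dx$, which precisely cancels the diagonal contribution and delivers \eqref{*}.

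For the equality statement, tracking equality in each of the steps above shows that equality in \eqref{*} forces equality a.e.\ in both Picone inequalities and in both applications of Young; the strictness criterion of Theorem \ref{picone} then compels $w_1/w_2$ to be a constant $c>0$, and when $p(x)\not\equiv r$ the residual Young equality $A(x,\nabla w_1)=A(x,\nabla w_2)$ on $\{p(x)>r\}$, combined with $\nabla w_1=c\nabla w_2$, yields $c^{p(x)}=1$ and hence $c=1$, so $w_1\equiv w_2$. The main technical obstacle I anticipate is the rigorous justification of the distributional integration by parts: one must verify that $\phi_1,\phi_2\in W_0^{1,p(x)}(\Omega)$ (through a chain-rule argument exploiting $w_1,w_2\in W_0^{1,p(x)}(\Omega)$ and the $L^\infty$ bounds on $w_1/w_2$ and $w_2/w_1$) and that $a(x,\nabla w_i)$ lies in $L^{p_c(x)}(\Omega;\mathbb{R}^N)$ so that every pairing is well-defined, which follows from the growth estimate $|a(x,\xi)|\lesssim|\xi|^{p(x)-1}$ extracted from the Hessian-type bound assumed on $a$.
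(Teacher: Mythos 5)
Your proposal follows essentially the same route as the paper's proof: apply the Picone identity of Theorem \ref{picone} in the two symmetric forms, linearize the right-hand side with Young's inequality for the exponents $p(x)/r$ and $p(x)/(p(x)-r)$, use the Euler identity $\langle a(x,\xi),\xi\rangle=A(x,\xi)$ coming from the $p(x)$-homogeneity, and sum and integrate to obtain \eqref{*}. The only difference is cosmetic: for the equality case the paper invokes the ray-strict (resp.\ strict) convexity statement of Proposition \ref{1bis}, whereas you track equality in the Picone and Young steps directly, which is precisely how that proposition is established.
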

\noindent 
In sections 3, 4 and 5, we derive some applications of the new Picone identity. Precisely, we investigate the solvability of some boundary problems involving quasilinear elliptic operators with variable exponent.\\ 
In section 3, we consider the following nonlinear problem: 
\begin{equation}\label{10}
     \left\{
         \begin{alignedat}{2} 
             {} -\Delta_{p(x)} u+ g(x,u)
             & {}= f(x,u)  
             && \quad\mbox{ in }\,  \Omega;
             \\
             u & {}> 0
             && \quad\mbox{ in }\,  \Omega;
             \\
             u & {}= 0
             && \quad\mbox{ on }\,  \partial\Omega .
          \end{alignedat}
     \right.
\end{equation}
The extended Picone identity can be reformulated as in  Lemma \ref{implem} below.  Together with the strong maximum principle and elliptic regularity, this identity can be used to prove the uniqueness of weak solutions to elliptic equations as \eqref{10}. 
 In particular, we establish the following result:
\begin{thm}\label{Application 2}
Let $f, g: \Omega\times [0,\infty)\to \mathbb{R}^+$ be defined as $f(x,t)= h(x)t^{q(x)-1}$ and $g(x,t)= l(x)t^{s(x)-1}$ with $1\leq q,s\in C(\overline{\Omega})$ such that
\begin{itemize}
\item $q_+ < p_- <s_-$ and $q_- \geq 1$;\vspace{-0.3cm}
\item $h,l \in L^{\infty}(\Omega)$, positive functions such that $x\to \dfrac{h(x)}{l(x)} \in L^{\infty}(\Omega).$
\end{itemize}
Then, there exists a weak solution $u$ to \eqref{10}, i.e. $u$ belongs to $W^{1,p(x)}_0(\Omega)\cap L^{s(x)}(\Omega)$ and satisfies for any $\phi\in W^{1,p(x)}_0(\Omega)\cap L^{s(x)}(\Omega)$:
$$\int_\Omega |\nabla u|^{p(x)-2}\nabla u.\nabla \phi ~dx=\int_\Omega (f(x,u)-g(x,u))\phi ~dx.$$ 
Furthermore $u\in C^{1,\alpha}(\overline{\Omega})$ for some $\alpha \in (0,1)$ and $0 \leq u^{s_--q_+}\leq \max\{\|\frac{h}{l}\|_{L^{\infty}},1\}$ {\it a.e.} in $\Omega$.\\
Assume in addition that $x\to \dfrac{l(x)}{h(x)}$ belongs to $L^\infty(\Omega)$, then $u \in C^0_d(\overline\Omega)^+ \eqdef \{ v\in C_0(\overline\Omega)\ | \ \exists \ c_1,\ c_2\in \R^+_*:\ c_1 \leq \dfrac{v}{\dist(x, \partial \Omega)} \leq c_2\}$ and is the unique weak solution to \eqref{10}.
\end{thm}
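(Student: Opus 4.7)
My plan is to combine a variational existence argument with a truncation-type $L^\infty$ bound, to use standard $p(x)$-Laplacian regularity and a barrier construction for the boundary estimates, and finally to extract uniqueness from Theorem \ref{Diaz-Saa}.

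\emph{Existence, $L^\infty$ bound, regularity.} I would minimize
\begin{equation*}
J(u) = \int_\Omega \frac{|\nabla u|^{p(x)}}{p(x)}\, dx + \int_\Omega \frac{l(x)}{s(x)} |u|^{s(x)}\, dx - \int_\Omega \frac{h(x)}{q(x)} |u|^{q(x)}\, dx
\end{equation*}
on $X := W^{1,p(x)}_0(\Omega) \cap L^{s(x)}(\Omega)$. The hypothesis $q_+ < p_- \leq s_-$ makes the subhomogeneous $L^{q(x)}$-term absorbable by the other two via Young's inequality in the modular setting, so $J$ is coercive on $X$; convexity of $|\xi|^{p(x)}/p(x)$ and of $l|u|^{s(x)}/s(x)$ together with the compact embedding $W^{1,p(x)}_0 \hookrightarrow L^{q(x)}$ (valid because $q_+ < p_-$) give weak lower semi-continuity, hence a minimizer exists. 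Replacing $u$ by $|u|$ preserves $J$, and $J(\varepsilon \varphi) < 0$ for small $\varepsilon>0$ and $\varphi \in C_c^\infty(\Omega)^+$ (since $q_+ < p_-$) ensures that the nonnegative minimizer $u$ is nontrivial. Set $M := \max\{\|h/l\|_\infty, 1\}^{1/(s_- - q_+)} \geq 1$ and test the equation against $(u - M)^+ \in X$: on $\{u > M\}$ one has $u \geq 1$, so $u^{s(x)-1} \geq M^{s(x)-q(x)}u^{q(x)-1} \geq (h/l)u^{q(x)-1}$, whence $l u^{s(x)-1} - h u^{q(x)-1}\geq 0$; this forces $\int_\Omega |\nabla(u-M)^+|^{p(x)}\, dx \leq 0$ and hence $u\leq M$. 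Boundedness of the right-hand side then yields $u \in C^{1,\alpha}(\overline\Omega)$ via the standard $C^{1,\alpha}$ theory for the $p(x)$-Laplacian.

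\emph{Strict positivity and boundary decay (main obstacle).} The Vazquez-type strong maximum principle, applied to $-\Delta_{p(x)} u \geq -C$ with $C$ depending on $\|u\|_\infty$ and $\|l\|_\infty$, gives $u > 0$ in $\Omega$, and $u \leq c_2 \dist(\cdot, \partial \Omega)$ follows from $u \in C^1(\overline\Omega)$ with $u|_{\partial\Omega} = 0$. The delicate step is the matching lower bound $u \geq c_1 \dist(\cdot, \partial \Omega)$, which is exactly where the additional assumption $l/h \in L^\infty$ enters. I would build a subsolution of the form $\underline u = \varepsilon \psi \in C^0_d(\overline\Omega)^+$, where $\psi$ solves an auxiliary torsion-type problem $-\Delta_{p(x)} \psi = 1$ in $\Omega$, $\psi|_{\partial\Omega}=0$, and verify that for $\varepsilon > 0$ small enough the pointwise inequality $-\Delta_{p(x)}\underline u + l \underline u^{s(x)-1} \leq h \underline u^{q(x)-1}$ holds; the boundedness of $l/h$ is what makes the competing boundary decay rates compatible across both the $q(x)$- and $s(x)$-terms. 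A weak comparison argument (for instance, testing the difference $u_1 - u_2$ against $(\underline u - u)^+$) then yields $u \geq \underline u$.

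\emph{Uniqueness via Diaz-Saa.} For two weak solutions $u_1, u_2$ the previous step places them in $C^0_d(\overline\Omega)^+$, so $u_i/u_j \in L^\infty(\Omega)$ and Theorem \ref{Diaz-Saa} applies with $A(x,\xi) = |\xi|^{p(x)}$. Using $q_+ < p_-$ pick $r \in (q_+, p_-)$; then automatically $r < s_-$, and the map $t \mapsto F(x,t) := h(x) t^{q(x)-r} - l(x) t^{s(x)-r}$ is strictly decreasing on $(0,\infty)$ for every $x \in \Omega$. Dividing the equation by $u_i^{r-1}$ and invoking Theorem \ref{Diaz-Saa} with $w_i = u_i$ gives
\begin{equation*}
0 \;\leq\; \int_\Omega \bigl( F(x,u_1) - F(x,u_2) \bigr)(u_1^r - u_2^r)\, dx \;\leq\; 0,
\end{equation*}
the upper bound following because $(F(x,u_1)-F(x,u_2))$ and $(u_1^r-u_2^r)$ have opposite signs pointwise. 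Equality in Theorem \ref{Diaz-Saa} forces $u_1/u_2$ to be constant in $\Omega$; since $r < p_- \leq p(x)$, in particular $p(x)\not\equiv r$, and the "moreover" clause of Theorem \ref{Diaz-Saa} finally yields $u_1 = u_2$.
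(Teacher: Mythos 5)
Your existence step (minimizing the same functional), the $L^\infty$ bound via the test function $(u-M)^+$, the $C^{1,\alpha}$ regularity, and the uniqueness argument via Theorem \ref{Diaz-Saa} (the paper does the same thing with $r=p_-$ through Lemma \ref{implem}) all match the paper and are sound. The genuine gap is in the positivity/boundary step. First, the strong maximum principle you invoke does not exist: from $-\Delta_{p(x)}u\geq -C$ with a positive constant $C$ one cannot conclude $u>0$ (already for $p=2$, $N=1$, the function $u(x)=((x-1)^+)^2$ satisfies $-u''\geq -2$, is nonnegative and nontrivial, yet vanishes on an open set). A strong maximum principle needs the negative zero-order contribution to be controlled by a power of $u$ near $u=0$, and this is exactly where the hypothesis $l/h\in L^\infty$ enters: it guarantees $h u^{q(x)-1}-l u^{s(x)-1}\geq 0$ wherever $u$ is small, which is what the paper's Lemma \ref{SMPl} (condition \textbf{(c1)}, with the truncation $\chi_{u\geq\lambda}$ and Zhang's exponential annulus barrier) exploits. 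So you have misplaced the extra hypothesis: it is needed already to get $u>0$ in $\Omega$, not only for the lower distance bound.

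Second, the barrier you propose cannot work as stated. With $-\Delta_{p(x)}\psi=1$ one has, by non-homogeneity, $-\Delta_{p(x)}(\varepsilon\psi)=\varepsilon^{p(x)-1}\bigl(1-\ln\varepsilon\,|\nabla\psi|^{p(x)-2}\nabla\psi\cdot\nabla p\bigr)$, which contains a sign-indefinite term of size $\varepsilon^{p(x)-1}|\ln\varepsilon|$; more importantly, this quantity does not vanish as $\dist(x,\partial\Omega)\to 0$, whereas $h(\varepsilon\psi)^{q(x)-1}\lesssim \dist(x,\partial\Omega)^{q(x)-1}\to 0$ at the boundary whenever $q(x)>1$ (which the theorem allows, since only $q_+<p_-$ is assumed). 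Hence the pointwise inequality $-\Delta_{p(x)}\underline u+l\,\underline u^{s(x)-1}\leq h\,\underline u^{q(x)-1}$ fails in a boundary layer for every $\varepsilon>0$; in the constant-exponent case one avoids this by scaling the first eigenfunction, whose $p$-Laplacian decays like $\dist^{p-1}$, but no such homogeneity is available for $p(x)$. Moreover, the "weak comparison" you would use to conclude $u\geq\underline u$ is not the naive one, since $t\mapsto h t^{q(x)-1}-l t^{s(x)-1}$ is not monotone; the paper's Lemma \ref{CP} is stated for the truncated monotone operator precisely for this reason. The paper circumvents all of this by proving interior positivity (Lemma \ref{SMPl}) and a Hopf boundary lemma (Lemma \ref{HMP}) with the explicit exponential barrier on an annulus, and then obtains $c_1\dist(x,\partial\Omega)\leq u$ from $u\in C^1(\overline\Omega)$ and compactness of $\partial\Omega$; some argument of this type is needed to close your step.
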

Regarding the current literature, Theorem \ref{Application 2} does not require any subcritical growth condition for $g$ to establish existence and uniqueness of the weak solution to \eqref{10}.

In section $4$, we study a nonlinear fast diffusion equation (F.D.E. for short) driven by $p(x)$-Laplacian.  From the physical Fick's law, the diffusion coefficient of our problem is then proportional to $|\nabla u(x,t)|^{p(x)-2}$. 
It naturally leads to investigate the following F.D.E. type problem:
\begin{equation}\label{P}
     \left\{
         \begin{alignedat}{2} 
             {} \frac{q}{2q-1} \partial_t (u^{2q-1}) -\Delta_{p(x)} u
             & {}= f(x,u) + h(t,x) u^{q-1}  
             && \quad\mbox{ in } \, Q_T;
             \\
             u & {}> 0
             && \quad\mbox{ in }\,  Q_T;
             \\
             u & {}= 0
             && \quad\mbox{ on }\, \Gamma;
             \\
             u(0,.) & {}= u_0
             && \quad\mbox{ in }\, \Omega 
          \end{alignedat}
     \right.
\end{equation}
where $q\in (1,p_-)$, $Q_T = (0,T) \times \Omega$ and $\Gamma = (0,T) \times \partial\Omega$ for some $T>0$. We suppose that $h\in L^\infty(Q_T)$ and nonnegative. The assumptions on  $f$ are given by
\begin{enumerate}
 \item[$(f_1)$] $f : \Omega \times \mathbb{R^+} \to \mathbb{R^+}$ is a function such that $f(x,0)=0$ for all $x \in \Omega$ and $f \not\equiv 0$;
 \item[$(f_2)$] $\lim_{s \to 0^+}\dfrac{f(x,s)}{s^{2q -1}} = +\infty $ uniformly in $x$;
 \item[$(f_3)$] for any $x \in \Omega, s\to \dfrac{f(x,s)}{s^{q-1}}$ is nonincreasing in $\mathbb{R^+}\backslash \{0\}$.
\end{enumerate}
\begin{remark}
\noindent Conditions $(f_1)$ and $(f_3)$ imply 
there exist positive constant $C_1, C_2$ such that for any  $(x,s) \in \Omega \times \mathbb{R}^+$:
\begin{equation*}
0 \leq f(x,s) \leq C_1 + C_2 s^{q -1},
\end{equation*}
{ \it i.e.} $f$ has a strict subhomogeneous growth.
\end{remark}
\noindent We set $\mathcal R $ the operator defined by $\mathcal Rv= \dfrac{-\Delta_{p(x)}(v^{1/q})}{v^{(q-1)/q}}- \dfrac{f(x,v^{1/q})}{v^{(q-1)/q}}$ and the associated domain
$$\mathcal{D}(\mathcal R)= \{v: \Omega \to (0,\infty): v^{1/q} \in W_0^{1,p(x)}(\Omega), v \in L^{2}(\Omega), \mathcal Rv \in L^2(\Omega)\}.$$
Note that $\mathcal{D}(\mathcal R)$ contains for instance solutions to \eqref{subsolu}. One can also easily check that solutions to \eqref{supprob} belong to $\overline{\mathcal{D}(\mathcal R)}^{L^2(\Omega)}$.
In the sequel, we denote $X^+\eqdef \{x\in X\,|\, x\geq 0\}$  the associated positive cone of a given real vector space $X$.\\
In order to establish existence and properties of weak solutions to \eqref{P}, we investigate the following related parabolic problem:
 \begin{equation}\label{WeakS1}
     \left\{
         \begin{alignedat}{2} 
             {} v^{q-1} \partial_t (v^q) - \Delta_{p(x)} v
             & {}= h(t,x) v^{q-1} + f(x,v)
             && \quad\mbox{ in }\, Q_T\,;
             \\
             v & {}>0
             && \quad\mbox{ in }\, Q_T\,;
             \\
             v & {}= 0
             && \quad\mbox{ on }\, \Gamma \,;
             \\
             v(0,.) & {}= v_0(x) > 0
             && \quad\mbox{ in }\, \Omega \,.
          \end{alignedat}
     \right.
\end{equation}
 The notion of weak solution for \eqref{WeakS1} is given as follows:
\begin{define}
A weak solution to \eqref{WeakS1} is any positive function \newline $v \in L^{\infty}(0,T;W_0^{1,p(x)}(\Omega))\cap L^{\infty}(Q_T) \cap C(0,T; L^r(\Omega))$ for any $r \geq 1$ such that $\partial_t(v^q) \in L^{2}(Q_T)$ and for any $\phi \in C_0^{\infty}(Q_T)$ satisfies
\begin{equation}\label{weaksodef}
\begin{split}
\int_0^T \int_{\Omega} \partial_t(v^q) v^{q-1} \phi ~dxdt &+ \int_0^T \int_{\Omega} |\nabla v|^{p(x)-2} \nabla v. \nabla \phi ~dxdt \\
&= \int_0^T \int_{\Omega} h(t,x) v^{q-1} \phi ~dxdt + \int_0^T \int_{\Omega} f(x,v) \phi ~dxdt.
\end{split}
\end{equation} 
\end{define}
\noindent Concerning \eqref{WeakS1},  we prove the following results:
\begin{thm}\label{wea}
Let $T > 0, v_0 \in C^{0}_{d}(\overline\Omega)^+ \cap W_{0}^{1,p(x)}(\Omega)$. In addition, there exists $h_0 \in L^{\infty}(\Omega)$, $h_0\not\equiv 0$ and $h(t,x) \geq h_0(x)\geq0$ for a.e $x \in \Omega$, for a.e. $t\geq 0$. Assume in addition $q \leq \min\{\frac{N}{2}+1, p_-\}$ and $f$ satisfies $(f_1)$-$(f_3)$. Then there exists a weak solution to \eqref{WeakS1}. 
\end{thm}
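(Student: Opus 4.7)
The plan is to construct a weak solution to \eqref{WeakS1} via time discretization, in the spirit of the nonlinear semigroup (Crandall--Liggett) approach, exploiting the fact that the Picone/Diaz-Saa identity of Theorem~\ref{Diaz-Saa} makes the underlying elliptic operator $T$-accretive in $L^2$. Performing the change of unknown $w=v^q$, the equation can be rewritten as $\partial_t w + \mathcal R w = h$ with $\mathcal R$ the operator introduced just after \eqref{P}. For $n\geq 1$, I would set $\tau = T/n$, $t_k = k\tau$, $h_k(x) = \tau^{-1}\int_{t_{k-1}}^{t_k} h(t,x)\,dt$, and, starting from $w_0 = v_0^q$, iteratively define $w_k$ as the positive solution of
\[
  \frac{w_k - w_{k-1}}{\tau} + \mathcal R w_k = h_k.
\]
Written in terms of $v_k = w_k^{1/q}$, this is an elliptic problem of the type treated in Theorem~\ref{Application 2}, supplemented with a coercive zeroth-order contribution $v_k^{2q-1}/\tau$; existence of a positive $v_k \in W_0^{1,p(x)}(\Omega)\cap C^{1,\alpha}(\overline\Omega)\cap C_d^0(\overline\Omega)^+$ follows by minimization of the associated strictly convex functional, while uniqueness at each step comes from the Picone inequality of Theorem~\ref{picone}.

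For the uniform estimates, I would test the discrete equation against $w_k - w_{k-1}$, against $w_k$ itself, and against truncations. The first choice, combined with the monotonicity of $k\mapsto \int_\Omega \tfrac{1}{p(x)}|\nabla v_k|^{p(x)}\,dx$ (minus the potential coming from $f$) along the scheme, which is a direct consequence of the Diaz-Saa inequality \eqref{*}, yields an $L^2(Q_T)$ bound on $\partial_t w^{(n)}$, where $w^{(n)}$ is the piecewise-affine interpolant of $(w_k)$. The second one, after integration by parts, produces a uniform bound of $v^{(n)}$ in $L^\infty(0,T;W_0^{1,p(x)}(\Omega))$ thanks to the subhomogeneity built into $(f_3)$. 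An $L^\infty(Q_T)$ estimate is obtained by comparison with a super-solution constructed from the a priori bound on $h$ and from $(f_3)$, while positivity (and the boundary behaviour captured by $C^0_d(\overline\Omega)^+$) relies on the strong maximum principle together with the assumption $h\geq h_0\not\equiv 0$.

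The passage to the limit then follows from an Aubin--Lions-type compactness argument adapted to variable-exponent spaces: the uniform bounds on $\partial_t w^{(n)}$ in $L^2(Q_T)$ and on $v^{(n)}$ in $L^\infty(0,T;W_0^{1,p(x)}(\Omega))$ yield, up to a subsequence, strong convergence $v^{(n)}\to v$ in $L^r(Q_T)$ for every $r\geq 1$; strong convergence of the gradients, required in order to identify the weak limit of $|\nabla v^{(n)}|^{p(x)-2}\nabla v^{(n)}$, is then recovered by a Minty monotonicity trick applied to $-\Delta_{p(x)}$. I expect the main difficulty to lie in passing to the limit in the nonlinear time-derivative term $v^{q-1}\partial_t(v^q)$: this requires strong convergence of $v^{(n)}$ in a Lebesgue space of sufficiently high exponent, which is exactly what the hypothesis $q\leq\min\{N/2+1,p_-\}$ buys via the Sobolev embedding of $W_0^{1,p(x)}(\Omega)$. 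A careful chain-rule argument in the limit, coupled with lower semicontinuity, then identifies $\partial_t(v^q)$ as an element of $L^2(Q_T)$ and delivers the continuity $v\in C(0,T;L^r(\Omega))$ asserted in the definition of weak solution.
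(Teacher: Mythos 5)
Your overall skeleton---implicit Euler discretization in the variable $w=v^q$, solvability of each elliptic step by minimization together with Picone-based uniqueness, uniform positive barriers, energy estimates on the discrete derivative and the gradient, and a compactness/monotonicity passage to the limit---is the same as the paper's. However, the two a priori estimates on which everything rests are not justified as you state them. In the paper both bounds come from a single computation: testing \eqref{Itersch1} with $(v_n^q-v_{n-1}^q)/v_n^{q-1}$, telescoping the elliptic term by the Picone inequality of Lemma \ref{implem}, and---crucially---bounding the singular quotient $f(x,v_n)/v_n^{q-1}$ in $L^2(\Omega)$ uniformly in $n$ (estimate \eqref{require}), which uses the lower barrier $v_n\geq\underline w\gtrsim\dist(\cdot,\partial\Omega)$. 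Your substitute, ``monotonicity of the energy along the scheme as a direct consequence of the Diaz--Saa inequality,'' does not by itself control the term $\int_\Omega \frac{f(x,v_k)}{v_k^{q-1}}(w_k-w_{k-1})\,dx$; you would need either an \eqref{require}-type bound or a telescoping of the $f$-term via the concavity of $w\mapsto F(x,w^{1/q})$ that follows from $(f_3)$, and neither is spelled out. Moreover, your second test function, $w_k$ itself, only yields time-integrated gradient bounds, not the $L^\infty(0,T;W_0^{1,p(x)}(\Omega))$ bound you attribute to it; in the paper that uniform-in-time bound is extracted from the same telescoped $w_k-w_{k-1}$ estimate, again through Lemma \ref{implem} and \eqref{require}.

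Related to this, the hypothesis $q\leq\min\{\frac N2+1,p_-\}$ is misplaced in your plan. You invoke it, via Sobolev embedding, to pass to the limit in $v^{q-1}\partial_t(v^q)$; but once the barriers $\underline w\leq v_{\Delta_t}\leq\overline w$ are available, all the strong $L^r$ convergences follow from the $L^2(Q_T)$ bound on the discrete time derivative, interpolation with the uniform $L^\infty$ bound and an Ascoli/Aubin--Lions argument, for any $q\in(1,p_-]$. The restriction $q\leq\frac N2+1$ is used only in \eqref{require}, i.e. to make $\dist^{-2(q-1)}(\cdot,\partial\Omega)$ integrable so that $f(x,v_n)/v_n^{q-1}$ is bounded in $L^2$, while $q\leq p_-$ is what makes Lemma \ref{implem} and the hidden convexity applicable. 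A smaller inaccuracy: the elliptic problem at each time step is not of the form treated in Theorem \ref{Application 2}; it is \eqref{E_1}, handled by Theorems \ref{exis} and \ref{uniq3}, and it is precisely the comparison principle of Theorem \ref{uniq3} that produces sub- and supersolutions $\underline w,\overline w$ independent of the step index, which your outline needs but does not construct.
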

Based on the accretivity of ${\mathcal R}$ with domain ${\mathcal D}({\mathcal R})$, we show the following result providing a contraction property for weak solutions to \eqref{WeakS1} under suitable conditions on initial data:
\begin{thm}\label{contr}
Let $v_1$ and $v_2$ are weak solutions of \eqref{WeakS1} with initial data $ u_0, v_0 \in C^0_d(\overline\Omega)^+ \cap W_{0}^{1,p(x)}(\Omega)$ and such that $u_0^q$, $v_0^q\in\overline{\mathcal{D}(\mathcal R)}^{L^2(\Omega)} $ and $h, g\in L^\infty(Q_T)$, such that $h\geq h_0$, $g\geq g_0$ with $h_0, g_0$ as in Theorem \ref{wea}. Then, for any $0\leq t\leq T$, 
\begin{equation}\label{ntk}
\Vert (v_1^q(t)-v^q_2(t))^+\Vert_{L^2}\leq \Vert (u_0^q-v_0^q)^+\Vert_{L^2}+\int_0^t\Vert (h(s)-g(s))^+\Vert_{L^2}\,ds.
\end{equation}
\end{thm}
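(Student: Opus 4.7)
The plan is to recast the PDE as an abstract evolution $\partial_t w + \mathcal{R}w = h$ in $L^2$, subtract the two equations, test against the positive part of the difference, and exploit the $T$-accretivity of $\mathcal{R}$ (a consequence of Theorem \ref{Diaz-Saa} combined with hypothesis $(f_3)$) to obtain the needed differential inequality.

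Concretely, setting $w_i(t) = v_i(t)^q$, I recast \eqref{WeakS1} as $\partial_t w_i + \mathcal{R} w_i = h_i$ (with $h_1 = h$ and $h_2 = g$). This holds in $L^2(Q_T)$ because $\partial_t(v_i^q) \in L^2(Q_T)$ by the definition of weak solution, with $w_i(t) \in \overline{\mathcal{D}(\mathcal{R})}^{L^2(\Omega)}$ for a.e.\ $t$. Subtracting the two equations, pairing with the test $(w_1(t) - w_2(t))^+ \in W^{1,p(x)}_0(\Omega) \cap L^\infty(\Omega)$ at each time, and invoking the Mignot--Bamberger chain rule for $t \mapsto \|(w_1 - w_2)^+(t)\|_{L^2}^2$ produces, at almost every $t \in (0, T)$,
\[
\tfrac{1}{2}\tfrac{d}{dt}\|(w_1 - w_2)^+(t)\|^2_{L^2(\Omega)} + \mathcal I(t) = \int_\Omega (h - g)(w_1 - w_2)^+\,dx,
\]
with $\mathcal I(t) \eqdef \int_\Omega (\mathcal{R}w_1 - \mathcal{R}w_2)(w_1 - w_2)^+\,dx$.

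The central step is the nonnegativity $\mathcal I(t) \geq 0$. Decomposing $\mathcal{R}$ into its diffusion and reaction parts, for the diffusion part I apply Theorem \ref{Diaz-Saa} with $r = q \in [1, p_-]$ to the lattice pair $(v_1 \vee v_2,\, v_2)$: both factors belong to $W^{1,p(x)}_0(\Omega)$ and have mutual ratios in $L^\infty(\Omega)$ thanks to the $C^0_d$ lower bounds inherited from the initial data. On $\{v_1 \leq v_2\}$ both gradients and the difference $(v_1 \vee v_2)^q - v_2^q$ vanish, so the resulting integral localizes to $\{v_1 > v_2\}$ and coincides exactly with the diffusion contribution to $\mathcal I(t)$. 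For the reaction part, hypothesis $(f_3)$ says $s \mapsto f(x, s)/s^{q-1}$ is nonincreasing, so on $\{v_1 > v_2\}$ the integrand $\bigl(f(x, v_2)/v_2^{q-1} - f(x, v_1)/v_1^{q-1}\bigr)(v_1^q - v_2^q)^+$ is a product of nonnegative terms, and vanishes elsewhere.

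Combining Cauchy--Schwarz $\int_\Omega (h - g)(w_1 - w_2)^+\,dx \leq \|(h - g)^+\|_{L^2}\|(w_1 - w_2)^+\|_{L^2}$ with $\mathcal I(t) \geq 0$ gives $\tfrac{d}{dt}\|(w_1 - w_2)^+(t)\|_{L^2} \leq \|(h - g)^+(t)\|_{L^2}$ almost everywhere (the set where the left factor vanishes is handled trivially), and integration on $[0, t]$ yields \eqref{ntk}. The main hurdles I expect are (i) justifying the chain rule and the admissibility of $(w_1 - w_2)^+$ as a test function with only the regularity $\partial_t w_i \in L^2(Q_T)$ and $w_i \in L^\infty(0, T; W^{1,p(x)}_0(\Omega))$; and (ii) verifying that the pair $(v_1 \vee v_2,\, v_2)$ meets the Diaz--Saa admissibility conditions uniformly in $t$, which essentially amounts to propagating the $C^0_d$ regularity of the initial datum along the parabolic flow.
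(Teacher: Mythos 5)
Your plan is the natural formal argument, but it has a genuine gap exactly at the point where the paper is forced to take a different route. You assert that each weak solution can be recast as an abstract equation $\partial_t w_i+\mathcal R w_i=h_i$ holding in $L^2(Q_T)$, on the grounds that $\partial_t(v_i^q)\in L^2(Q_T)$ and $w_i(t)\in\overline{\mathcal D(\mathcal R)}^{L^2(\Omega)}$. This does not follow: membership in the $L^2$-closure of $\mathcal D(\mathcal R)$ (which is anyway only assumed for the initial data) gives no meaning to $\mathcal R w_i(t)$ as an $L^2$ function, and the definition of weak solution only provides the formulation \eqref{weaksodef} tested against $C_0^\infty(Q_T)$, with the weight $v_i^{q-1}$ in front of the time derivative. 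Consequently the identity $\frac12\frac{d}{dt}\|(w_1-w_2)^+\|_{L^2}^2+\mathcal I(t)=\int_\Omega(h-g)(w_1-w_2)^+dx$ is not available as stated; to reproduce it from \eqref{weaksodef} you must test with $(v_1^q-v_2^q)^+/v_i^{q-1}$, and the admissibility of such test functions (gradients involving $\nabla v_i/v_i^{q-1}$, degeneracy of $v_i^{q-1}$ at $\partial\Omega$) together with the Mignot--Bamberger chain rule are precisely the obstructions, not routine hurdles. The same applies to item (ii) of your plan: a weak solution in the sense of the paper is not known to satisfy uniform-in-time $C^0_d$ bounds; in the paper these bounds are obtained only for the solutions constructed by the implicit Euler scheme, via the sandwich $\underline w\le v_n\le\overline w$ of Step 3 of Theorem \ref{wea}, and even at the purely elliptic level (Theorem \ref{uniq3}) the analogous computation has to be regularized by replacing $v$ with $v+\epsilon$ and concluded by Fatou-type limits.

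The paper's proof avoids all of this by working at the level of the time discretization: it compares two implicit Euler schemes with steps $\epsilon$ and $\eta$ for the abstract problem \eqref{Itersch}, uses the resolvent contraction \eqref{accretive} (the $T$-accretivity of $\mathcal R$, proved in Corollary \ref{L^infty} and Remark \ref{rema} via the Picone identity, Lemma \ref{implem}, and $(f_3)$, in a setting where the elliptic solutions are $C^{1,\alpha}(\overline\Omega)\cap C^0_d(\overline\Omega)^+$ so the singular test functions are legitimate), and then runs the Crandall--Liggett/Barbu two-parameter comparison with the auxiliary functions $\phi$, $b$, $\psi$ solving \eqref{psi}, passing to the limit $\epsilon,\eta\to0$ to transfer the contraction to the limiting weak solutions. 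So your key convexity ingredients (Picone/Diaz--Saa plus $(f_3)$) are the right ones, but they must be applied to the resolvent problems, where the regularity is available, rather than directly to the time-dependent equation; as written, your argument does not close.
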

Furthermore, using a similar approach as in \cite{*27}, we consider for $\epsilon>0$ the perturbed operator 
$\mathcal R_\epsilon  v= \dfrac{-\Delta_{p(x)}(v^{1/q})}{(v+\epsilon)^{(q-1)/q}}- \dfrac{f(x,v^{1/q})}{(v+\epsilon)^{(q-1)/q}}$. If $p_-\geq 2$, we can prove (as in Proposition 2.6 in \cite{*27}) that 
$$\displaystyle\overline{\mathcal{D}(\mathcal R_\epsilon)}^{L^2(\Omega)}\supset \dot{V}_+^q\cap C^{0}_{d^q}(\overline\Omega)^+ .$$ Arguing as in Theorem \ref{contr} with the operator ${\mathcal R}_\epsilon$  instead of ${\mathcal R}$ and passing to the limit as $\epsilon\to 0^+$, we get:
\begin{cor}\label{pg2}
Assume $p_-\geq 2$. Let $v_1$ and $v_2$ are weak solutions of \eqref{WeakS1} with initial data $ u_0, v_0 \in C^0_d(\overline\Omega)^+ \cap W_{0}^{1,p(x)}(\Omega)$. Then Theorem \ref{contr} holds.
\end{cor}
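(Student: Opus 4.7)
The plan is to adapt the proof of Theorem \ref{contr} to the perturbed operator $\mathcal R_\epsilon$ and pass to the limit $\epsilon\to 0^+$, following the strategy sketched immediately before the statement. Under the hypothesis $p_-\ge 2$, the cited extension of Proposition 2.6 of \cite{*27} yields $\overline{\mathcal{D}(\mathcal R_\epsilon)}^{L^2(\Omega)}\supset \dot V^q_+\cap C^0_{d^q}(\overline\Omega)^+$. Since $u_0,v_0\in C^0_d(\overline\Omega)^+\cap W^{1,p(x)}_0(\Omega)$ gives $u_0^q,v_0^q\in \dot V^q_+\cap C^0_{d^q}(\overline\Omega)^+$, both initial data belong to $\overline{\mathcal{D}(\mathcal R_\epsilon)}^{L^2(\Omega)}$ for every $\epsilon>0$, so the hypotheses of the Theorem \ref{contr}-type statement are met for the $\mathcal R_\epsilon$-flow without any further restriction on the data.

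Next, one re-runs the accretivity computation of Theorem \ref{contr} with $\mathcal R_\epsilon$ in place of $\mathcal R$. The core step is to test the difference of the two equations by $(v_1^q-v_2^q)^+/(v_i+\epsilon)^{(q-1)/q}$, apply the Diaz--Saa inequality of Theorem \ref{Diaz-Saa} with $r=q$ on the $p(x)$-Laplacian contribution, and use the monotonicity assumption $(f_3)$ of $s\mapsto f(x,s)/s^{q-1}$ to absorb the source term. The pointwise Picone inequality underlying Theorem \ref{Diaz-Saa} is insensitive to the $\epsilon$-shift appearing only in the denominator of $\mathcal R_\epsilon$, so this yields
\begin{equation*}
\tfrac12\frac{d}{dt}\|(v_1^q-v_2^q)^+\|_{L^2}^2 \le \int_\Omega (h-g)^+(v_1^q-v_2^q)^+\,dx + \rho_\epsilon(t),
\end{equation*}
in the distributional sense in $t$, where $\rho_\epsilon(t)$ is an error produced by the $\epsilon$-regularisation. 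Integrating in time and applying Cauchy--Schwarz gives \eqref{ntk} up to an additive $\epsilon$-remainder; letting $\epsilon\to 0^+$ then yields the conclusion.

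The main obstacle is to control $\rho_\epsilon(t)$ uniformly on $[0,T]$ without sacrificing the sharp form of \eqref{ntk}. This requires both the $L^\infty$-upper bound and the strict positivity $v_i\ge c\,d(x,\partial\Omega)$ for the weak solutions $v_1,v_2$ provided by Theorem \ref{wea}: together they force $(v_i+\epsilon)^{(q-1)/q}\to v_i^{(q-1)/q}$ in an $L^\infty$-dominated fashion, so that Lebesgue dominated convergence applies and $\rho_\epsilon(t)\to 0$ uniformly in $t$. The assumption $p_-\ge 2$ is used essentially both to obtain the domain inclusion of the first step (as in \cite{*27}) and to propagate these quantitative pointwise bounds along the parabolic flow, so that the approximation argument of Theorem \ref{contr} passes through in the enlarged functional setting.
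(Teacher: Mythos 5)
Your opening step is the paper's: for $p_-\geq 2$ the density result modelled on Proposition 2.6 of \cite{*27} gives $\overline{\mathcal{D}(\mathcal R_\epsilon)}^{L^2(\Omega)}\supset \dot{V}_+^q\cap C^{0}_{d^q}(\overline\Omega)^+$, so $u_0^q,v_0^q$ become admissible initial data and the extra hypothesis of Theorem \ref{contr} disappears. But from there you leave the intended route, and the replacement has a genuine gap. ``Arguing as in Theorem \ref{contr} with $\mathcal R_\epsilon$'' means re-running the nonlinear semigroup argument: solvability of the resolvent problems $u+\lambda\mathcal R_\epsilon u=h_0$ (the analogue of Corollary \ref{L^infty}), the accretivity inequality \eqref{accretive} for $\mathcal R_\epsilon$ obtained through the Picone identity as in Theorem \ref{exis}/\ref{uniq3}, the implicit Euler double discretization with the Crandall--Liggett comparison functions $\Phi^{\epsilon,\eta}$, $\psi^{\epsilon,\eta}$, identification of the limits of the schemes with the weak solutions $v_1,v_2$ via Theorem \ref{wea}, and only then the limit $\epsilon\to 0^+$ in the resulting inequality. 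You replace all of this by a direct Gronwall-type computation: testing the two parabolic equations with $(v_1^q-v_2^q)^+/(v_i+\epsilon)^{(q-1)/q}$ and differentiating $\|(v_1^q-v_2^q)^+\|_{L^2}^2$ in time. That computation is precisely what the semigroup machinery is designed to avoid: the weak formulation \eqref{weaksodef} only admits smooth test functions, and the admissibility of these solution-dependent, time-dependent, merely Sobolev test functions together with the chain rule for $\frac{d}{dt}\|(v_1^q-v_2^q)^+\|_{L^2}^2$ is exactly the unproved technical core. Moreover, with the $\epsilon$-shift placed only in the denominator (numerator unshifted), the gradient terms no longer combine into the form covered by Lemma \ref{implem} or Theorem \ref{Diaz-Saa}; calling the Picone inequality ``insensitive to the shift'' hides real error terms in the elliptic part near $\partial\Omega$, where $v_i^q/(v_i^q+\epsilon)$ degenerates --- compare the careful shifted test functions $\bigl((v+\epsilon)^q-(\tilde v+\epsilon)^q\bigr)^{\pm}/(v+\epsilon)^{q-1}$ and the Fatou arguments needed in Theorem \ref{uniq3}.

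There is also an internal inconsistency that exposes the gap: in your actual estimate the domain inclusion of the first paragraph, and hence the hypothesis $p_-\geq 2$, are never used --- your differential inequality, if it were justified, would prove Theorem \ref{contr} for arbitrary data without the condition $u_0^q,v_0^q\in\overline{\mathcal{D}(\mathcal R)}^{L^2(\Omega)}$ and without $p_-\geq 2$, which is stronger than what the paper obtains and signals that the missing justifications are where the difficulty lives. In the paper $p_-\geq 2$ enters only through the density statement that makes the initial data reachable by the $\mathcal R_\epsilon$-resolvent scheme; your further claim that it ``propagates quantitative pointwise bounds along the parabolic flow'' has no counterpart and is not substantiated. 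Finally, be careful with invoking a uniform lower bound $v_i\geq c\,\dist(x,\partial\Omega)$ for arbitrary weak solutions: in the paper such bounds are available for the discretized approximations $v_{\Delta_t}$ (squeezed between $\underline w$ and $\overline w$ in Theorem \ref{wea}), not assumed for the limiting weak solutions themselves, and deducing them for $v_1,v_2$ directly would already require the comparison principle you are trying to prove.
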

\noindent From Theorem \ref{contr}, we derive the following comparison principle from which uniqueness of the weak solution to problem \eqref{WeakS1} follows:
\begin{cor}\label{compari}
Let $u$ and $v$ are the weak solutions of \eqref{WeakS1} with initial data $ u_0, v_0$ satisfying conditions in Theorem \ref{contr} or Corollary \ref{pg2}. Assume $u_0 \leq v_0$  and $h,\ g\in L^\infty(Q_T) $, $h_0 \in L^\infty(\Omega)$ such that  and $0 < h_0 \leq h \leq g$. Then $u \leq v.$ 
\end{cor}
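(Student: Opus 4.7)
The plan is to obtain the conclusion as a direct application of the $L^2$-contraction estimate \eqref{ntk} supplied by Theorem \ref{contr} (or by Corollary \ref{pg2} when $p_-\geq 2$). First, I identify $v_1=u$ with initial datum $u_0$ and source $h$, and $v_2=v$ with initial datum $v_0$ and source $g$. The hypotheses of Theorem \ref{contr} are satisfied under the assumptions of Corollary \ref{compari}: $u_0,v_0\in C^0_d(\overline\Omega)^+\cap W_0^{1,p(x)}(\Omega)$ with $u_0^q,v_0^q\in\overline{\mathcal{D}(\mathcal R)}^{L^2(\Omega)}$ is part of ``the conditions in Theorem \ref{contr} or Corollary \ref{pg2}'', while the lower bound required on the source attached to $v_2$ is provided by $g\geq h\geq h_0>0$, so one may take $g_0=h_0$.

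Applying \eqref{ntk} for arbitrary $t\in[0,T]$ then yields
\begin{equation*}
\|(u^q(t)-v^q(t))^+\|_{L^2}\leq \|(u_0^q-v_0^q)^+\|_{L^2}+\int_0^t \|(h(s)-g(s))^+\|_{L^2}\,ds.
\end{equation*}
I next argue that both terms on the right-hand side vanish under the hypotheses of the corollary. Since $q>1$, the map $\tau\mapsto \tau^q$ is strictly increasing on $[0,\infty)$; together with the positivity of $u_0,v_0$ and the assumption $u_0\leq v_0$ a.e.\ in $\Omega$, this gives $u_0^q\leq v_0^q$ a.e., hence $(u_0^q-v_0^q)^+\equiv 0$. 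Similarly, $h\leq g$ a.e.\ in $Q_T$ gives $(h-g)^+\equiv 0$. Consequently $\|(u^q(t)-v^q(t))^+\|_{L^2}=0$ for every $t\in[0,T]$, so $u^q\leq v^q$ a.e.\ in $Q_T$, and then monotonicity of $\tau\mapsto \tau^{1/q}$ yields $u\leq v$ a.e.\ in $Q_T$.

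There is no genuine obstacle in this argument: it is essentially a bookkeeping exercise reducing the comparison principle to the contraction estimate \eqref{ntk}, which has been designed precisely for this purpose. The only points worth a careful check are the matching of the source terms with the correct solutions and the verification that the strict positivity $g_0>0$ needed in Theorem \ref{contr} follows from $g\geq h\geq h_0>0$; both are immediate.
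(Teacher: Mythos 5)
Your proposal is correct and follows exactly the route the paper intends: the corollary is stated as an immediate consequence of the contraction estimate \eqref{ntk}, applied with $v_1=u$ (datum $u_0$, source $h$) and $v_2=v$ (datum $v_0$, source $g$), whose right-hand side vanishes since $u_0\leq v_0$ and $h\leq g$, giving $(u^q-v^q)^+\equiv 0$ and hence $u\leq v$. Your verification that $g_0=h_0>0$ works as the lower bound for $g$ is the right bookkeeping and matches the paper's hypotheses.
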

\begin{remark}\label{conversion}
If $v \in L^{\infty}(Q_T)^+$ then from Proposition $9.5$ in \normalfont{\cite{*4}} we obtain $\frac{q}{2q-1} \partial_t(v^{2q-1}) = v^{q-1} \partial_t(v^q)= q v^{2q-2} \partial_t\,v$ in weak sense. 
\end{remark}
\begin{remark}
From Theorem \ref{contr}, we can derive stabilization  results for the evolution equation \eqref{P} in  $L^q(\Omega)$ with $q\in [2,\infty)$ (see \cite{*10} in this regard).
\end{remark}
From the above remark, under assumptions given in Theorem \ref{wea}, we obtain the existence of weak solutions to \eqref{P} satisfying the monotonicity properties in Theorem \ref{contr} and Corollaries \ref{pg2}, \ref{compari}.
We highlight that in our knowledge there is no result available in the current literature  about F.D.E. with variable exponent. In this regard our results are completely new. 

In the previous applications, the condition $(A1)$ plays a crucial role to get suitable convexity property of energy functionals. In section 5, we study a quasilinear elliptic problem where this condition is not satisfied. Precisely, given $\epsilon>0$, we study 
the following nonhomogeneous quasilinear elliptic problem: 
\begin{equation}\label{2}
     \left\{
         \begin{alignedat}{2} 
             {} -\div((|\nabla u|^2+ \epsilon u^2)^{\frac{p(x)-2}{2}} \nabla u) - (|\nabla u|^2+ \epsilon u^2)^{\frac{p(x)-2}{2}} \epsilon u 
             & {}= g(x,u)
             && \quad\mbox{ in }\, \Omega \,;
             \\
             u & {}= 0
             && \quad\mbox{ on }\, \partial\Omega \,;
             \\
             u & {}>  0
             && \quad\mbox{ in }\, \Omega \,
          \end{alignedat}
     \right.
\end{equation}
where  $g$ satisfies $(f_1)$ and $(\tilde{g})$ for some $m \in [1, p_-]$:
\begin{enumerate}
 \item[$(\tilde{g})$] For any $x \in \Omega,\ s\to \dfrac{g(x,s)}{s^{m-1}}$ is decreasing in $\mathbb{R^+}\backslash \{0\}$ and a.e. in $\Omega$.
\end{enumerate}
 Then we prove the following result:
 \begin{thm}\label{ray-strict}
Assume that $g$ satisfies $(f_1)$ and $(\tilde{g})$. Then for any $\epsilon$, \eqref{2} admits one and only one positive weak solution. Furthermore, $u \in C^1(\overline{\Omega})$, $u > 0$ in $\Omega$ and $\dfrac{\partial u}{\partial \vec n} <0$ on $\partial \Omega.$
 \end{thm}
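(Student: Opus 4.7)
The strategy is to establish existence by the direct method of the calculus of variations applied to the energy
\begin{equation*}
J(u) \eqdef \int_\Omega \tfrac{1}{p(x)}\bigl(|\nabla u|^2+\epsilon u^2\bigr)^{p(x)/2}\,dx - \int_\Omega G(x,u^+)\,dx,
\qquad G(x,s)\eqdef\int_0^s g(x,t)\,dt,
\end{equation*}
and to derive uniqueness from a Diaz--Saa-type inequality adapted to the present operator. The integrand $F(x,s,\xi)=(|\xi|^2+\epsilon s^2)^{p(x)/2}/p(x)$ is strictly convex and positively $p(x)$-homogeneous in the pair $Y=(s,\xi)\in\mathbb{R}^{N+1}$ but not in $\xi$ alone, so condition $(A1)$ fails and Theorem~\ref{picone} cannot be invoked directly; this is the structural source of the difficulty that the section is designed to address.

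For existence, the plan is to check coercivity of $J$ on $W_0^{1,p(x)}(\Omega)$ from the pointwise bound $(|\nabla u|^2+\epsilon u^2)^{p(x)/2}\geq|\nabla u|^{p(x)}$ combined with the subhomogeneous growth $g(x,s)\leq C_1+C_2 s^{m-1}$ (deduced from $(\tilde g)$ and $m\leq p_-$ exactly as in the remark after $(f_3)$), and to obtain weak lower semicontinuity from joint convexity of $F$ in $(s,\xi)$ together with the compact embedding $W_0^{1,p(x)}(\Omega)\hookrightarrow L^{p(x)}(\Omega)$. A non-negative minimizer $u$ then exists and is non-trivial because $J(t\phi)<0$ for small $t>0$ and a positive test $\phi$, using that $(\tilde g)$ supplies the lower bound $g(x,s)\geq g(x,1)s^{m-1}$ on $s\in(0,1]$. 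The strong maximum principle for $p(x)$-Laplacian-type operators yields $u>0$ in $\Omega$, while the standard $C^{1,\alpha}$ regularity theory for the $p(x)$-Laplacian together with Hopf's lemma produce $u\in C^1(\overline\Omega)$ with $\partial u/\partial\vec n<0$ on $\partial\Omega$.

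The main obstacle is uniqueness. Given two positive solutions $u_1,u_2$, the regularity $u_i\in C^0_d(\overline\Omega)^+$ justifies testing the weak equation for $u_i$ against $\phi_i=(u_1^m-u_2^m)/u_i^{m-1}$, $i=1,2$, and subtracting. Hypothesis $(\tilde g)$ gives on one side
\begin{equation*}
\int_\Omega\Bigl(\tfrac{g(x,u_1)}{u_1^{m-1}}-\tfrac{g(x,u_2)}{u_2^{m-1}}\Bigr)(u_1^m-u_2^m)\,dx\leq 0,
\end{equation*}
strictly unless $u_1\equiv u_2$. The hard part, and the main obstacle, is showing that the remaining terms (the divergence term together with the zero-order term) contribute with the opposite sign, since Theorem~\ref{Diaz-Saa} does not apply as stated. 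My plan is to mimic the proof of Theorem~\ref{picone} with $Y=(s,\xi)\in\mathbb{R}^{N+1}$ playing the role of $\xi$: the map $u\mapsto(u,\nabla u)$ is linear, $F$ is strictly convex and positively $p(x)$-homogeneous in $Y$, and the same Young-inequality algebra should deliver the sought pointwise Picone-type inequality whose integrated version furnishes the needed Diaz--Saa estimate in the non-$p(x)$-homogeneous setting. Matching the two opposite signs then forces $u_1\equiv u_2$ and completes the proof.
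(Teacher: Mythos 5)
Your existence, regularity and positivity part coincides with the paper's own proof (direct minimization of the same functional $\mathcal{J}_{\epsilon}$, nonnegativity and nontriviality of the minimizer via small multiples of a test function, then $C^{1,\alpha}$ regularity, strong maximum principle and Hopf-type lemma to land in $C^0_d(\overline\Omega)^+$), so only the uniqueness step deserves comment. There the paper argues differently from you: it proves the hidden-convexity result of Theorem \ref{thm1} (writing the principal integrand as $\frac{1}{p(x)}\bigl(u\,K(x,\nabla u/(mu))\bigr)^{p(x)/m}$ with $K(x,\xi)=(\epsilon+|\xi|^2)^{m/2}$ and using a perspective-function computation), so that $u\mapsto \mathcal{J}_{\epsilon}(u^{1/m})$ is ray-strictly convex on $\dot V^m_+$; then, for two solutions, the function $t\mapsto \mathcal{J}_{\epsilon}\bigl(((1-t)u_0^m+tu_1^m)^{1/m}\bigr)$ has vanishing derivative at $t=0$ and $t=1$, which is incompatible with strict convexity unless $u_0\equiv u_1$. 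Your route---a Picone/Diaz--Saa inequality for the pair $Y_u=(u,\nabla u)$, tested with $(u_1^m-u_2^m)/u_i^{m-1}$ and combined with the strictness in $(\tilde g)$---rests on the same hidden convexity but implements it through the Section \ref{section2} machinery; it yields an explicit integral inequality for the nonhomogeneous operator (of independent interest) instead of the one-dimensional path argument, at the price of having to prove the pointwise Picone inequality for the pair, which the paper sidesteps via Theorem \ref{thm1}.

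One caveat on the step you describe as ``the same Young-inequality algebra'': a verbatim transcription of the proof of Theorem \ref{picone} with $Y=(s,\xi)$ in place of $\xi$ breaks down, because that proof uses $\nabla(v^{1/m})=\nabla v/(mv^{(m-1)/m})$, whereas $(v^{1/m},\nabla (v^{1/m}))\neq \frac{1}{mv^{(m-1)/m}}(v,\nabla v)$: the zero-order component does not rescale like the gradient under the power substitution. The inequality you actually need, stated directly for the two solutions, namely
\begin{equation*}
\bigl\langle a(x,Y_{u_2}),\,Y_{u_1^m/u_2^{m-1}}\bigr\rangle\ \leq\ A^{\frac{m}{p(x)}}(x,Y_{u_1})\,A^{\frac{p(x)-m}{p(x)}}(x,Y_{u_2}),\qquad A(x,s,\xi)=(|\xi|^2+\epsilon s^2)^{\frac{p(x)}{2}},\ \ a\eqdef\tfrac{1}{p(x)}\partial_{(s,\xi)}A,
\end{equation*}
is nevertheless true, but for a slightly different reason: for $w=u_1^m/u_2^{m-1}$ one has the exact decomposition $Y_w=m\,(u_1/u_2)^{m-1}Y_{u_1}-(m-1)(u_1/u_2)^{m}Y_{u_2}$ (the zero-order components match because $m-(m-1)=1$), and then the $1$-homogeneous case of Proposition \ref{1} applied on $\mathbb{R}^{N+1}$ gives $\langle a(x,Y_{u_2}),Y_{u_1}\rangle\leq A^{1/p(x)}(x,Y_{u_1})A^{(p(x)-1)/p(x)}(x,Y_{u_2})$, which combined with Euler's identity $\langle a(x,Y_{u_2}),Y_{u_2}\rangle=A(x,Y_{u_2})$ and the Young inequality $m\alpha^{m-1}\beta\leq(m-1)\alpha^m+\beta^m$ yields the display; the subsequent Diaz--Saa summation uses $A^{m/p}B^{(p-m)/p}\leq\frac{m}{p}A+\frac{p-m}{p}B$ and hence the restriction $m\leq p_-$ guaranteed by $(\tilde g)$. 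With this lemma supplied (or, equivalently, deduced from the convexity statement of Theorem \ref{thm1} written at two points), your subtraction argument does force $u_1\equiv u_2$, since the nonlinearity term is strictly negative on $\{u_1\neq u_2\}$ while the operator terms sum to something nonnegative.
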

To get the uniqueness result contained in Theorem \ref{ray-strict}, we exploit the hidden convexity property of the associated energy functional in the interior of positive cone of $C^1(\overline{\Omega})$.
\section{Picone identity and Diaz-Saa inequality }\label{section2}
\subsection{Picone identity}
First we recall the notion of strict ray-convexity.
\begin{define}
Let $X$ be a real vector space.  Let $\dotV$ be a non empty cone in $X$. A function $J: \dotV \to \mathbb{R}$ is ray-strictly convex if for all $v_1,v_2 \in \dotV$ and for all $\theta \in (0,1)$
$$J((1-\theta) v_1+ \theta v_2)\leq (1-\theta) J(v_1)+\theta J(v_2)$$  
where the inequality is always strict unless $v_1= Cv_2$ for some $C>0$.
\end{define}
\noindent Then we have the following result:
\begin{pro}\label{1} Let $A$ satisfying $(A1)$ and $(A2)$ and let $r\geq 1$. Then, for any $x\in \Omega$ the map $\xi\to N_r(x,\xi) \eqdef  A(x,\xi)^{r/p(x)}$ is positively $r$-homogeneous and ray-strictly convex. For $r >1$, $\xi \to N_r(x,\xi)$ is even strictly convex.
\end{pro}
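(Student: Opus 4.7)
The plan is to reduce the statement to the homogeneous ``gauge'' $B(x,\xi)\eqdef A(x,\xi)^{1/p(x)}$, which by $(A1)$ is positively $1$-homogeneous in $\xi$, and then lift the desired properties of $B$ to $N_r=B^r$.

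The first step is to show that $B(x,\cdot)$ is convex on $\mathbb{R}^N$, with strict inequality whenever $\xi_1$ is not a positive multiple of $\xi_2$. I would do this through a Minkowski-type rescaling: for $\xi_1,\xi_2\ne 0$, set $t_i=B(x,\xi_i)>0$ and $\eta_i=\xi_i/t_i$, so that $A(x,\eta_i)=1$. Writing
$(1-\theta)\xi_1+\theta\xi_2 = s\bigl(\lambda\eta_1+(1-\lambda)\eta_2\bigr)$
with $s=(1-\theta)t_1+\theta t_2>0$ and $\lambda=(1-\theta)t_1/s\in(0,1)$, convexity of $A(x,\cdot)$ yields $A(x,\lambda\eta_1+(1-\lambda)\eta_2)\le 1$, and positive $p(x)$-homogeneity of $A$ then gives $B(x,(1-\theta)\xi_1+\theta\xi_2)\le s = (1-\theta)B(x,\xi_1)+\theta B(x,\xi_2)$. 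The strict convexity hypothesis $(A2)$ upgrades the inner inequality to a strict one as soon as $\eta_1\ne\eta_2$, i.e.\ whenever $\xi_1$ is not a positive multiple of $\xi_2$; this already yields ray-strict convexity of $N_1=B$.

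Positive $r$-homogeneity of $N_r=B^r$ is then immediate from the $1$-homogeneity of $B$ and the identity $N_r(x,t\xi)=t^r N_r(x,\xi)$. Since $B\ge 0$ is convex and $t\mapsto t^r$ is convex and nondecreasing on $[0,\infty)$, $N_r$ is convex by composition. To obtain the strict statements I would split on whether $\xi_1$ is a positive multiple of $\xi_2$. If it is not, the Minkowski step already gives strict inequality at the level of $B$, and strict monotonicity of $t\mapsto t^r$ on $[0,\infty)$ transmits it to $N_r$. If $\xi_1=C\xi_2$ with $C>0$, equality holds in the $B$-convexity inequality, but $B(x,\xi_1)=CB(x,\xi_2)$, so the two values $B(x,\xi_1)$ and $B(x,\xi_2)$ differ as soon as $C\ne 1$; for $r>1$ the strict convexity of $t\mapsto t^r$ then gives a strict inequality for $N_r=B^r$, whereas for $r=1$ one only recovers the equality case allowed by ray-strict convexity.

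The only technical content lies in the Minkowski-style rescaling and the careful use of $(A2)$; after that, everything is bookkeeping with the elementary convex map $t\mapsto t^r$. I do not expect any serious obstacle beyond tracking the ``positive multiple'' case when passing from $B$ to $B^r$, which is precisely what separates ray-strict from strict convexity in the statement.
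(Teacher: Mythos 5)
Your proof is correct and rests on the same mechanism as the paper's: the positive $p(x)$-homogeneity of $A$ is used to rescale onto the level set $\{A(x,\cdot)=1\}$, where the (strict) convexity of $A$ yields ray-strict convexity of $N_1$, and the case $r>1$ is then handled by composing with the strictly convex increasing map $t\mapsto t^r$. The paper packages the first step slightly differently (quasiconvexity of $N_1$, then subadditivity via a special choice of the convex weight, then homogeneity), whereas you normalize both points directly in Minkowski-gauge fashion, but the underlying argument and the strictness bookkeeping are essentially identical.
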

\begin{proof}
We begin by the case $r=1$. For any $t \in \mathbb{R}^+$, we have $N_1(x,t \xi)= t N_1(x,\xi)$. Furthermore,
$$A(x,(1-t)\xi_1+t \xi_2) \leq (1-t) A(x,\xi_1) + t A(x,\xi_2) \leq \max\{A(x,\xi_1),A(x,\xi_2)\}$$
for any $x\in \Omega,\, \xi_1,\,\xi_2 \in \mathbb{R}^N$ and $t \in [0,1]$. Therefore 
\begin{equation}\label{**}
    N_1(x,(1-t)\xi_1+t \xi_2) \leq \max\{N_1(x,\xi_1),N_1(x,\xi_2)\}
\end{equation}
and this inequality is always strict unless $\xi_1 = \lambda \xi_2$, for some $\lambda >0.$\\
Now we prove that $N_1$ is subadditive. \\
Without loss of generality, we can assume that $ \xi_1 \neq 0$ and  $ \xi_2 \neq 0 $. Then we have $N_1(x,\xi_1) >0$ and $N_1(x,\xi_2) >0$.
Therefore, from \eqref{**} and $1$-homogeneity of $N_1(x, \xi)$ we obtain for any $t \in (0,1)$:
$$N_1\bigg(x,(1-t)\dfrac{\xi_1}{N_1(x,\xi_1)} + t \dfrac{\xi_2}{N_1(x,\xi_2)}\bigg) \leq 1. $$
We now fix $t$ such that  $$\dfrac{1-t}{N_1(x, \xi_1)}=\dfrac{t}{N_1(x, \xi_2)}\ \  {\it i.e.} \ \ t=\dfrac{N_1(x, \xi_2)}{N_1(x, \xi_1)+ N_1(x, \xi_2)}  \leq 1.$$
Then we get
$$N_1\bigg(x,\dfrac{\xi_1+\xi_2}{N_1(x,\xi_1)+N_1(x, \xi_2)}\bigg) \leq 1$$
and by $1$-homogeneity of $N_1$, we obtain  $$N_1(x, \xi_1+\xi_2) \leq N_1(x,\xi_1)+ N_1(x,\xi_2),\mbox{ {\it i.e.} $N_1$ is subadditive.}$$
Finally for $t \in (0,1),\ \xi_1 \neq \lambda \xi_2, \ \forall \lambda>0$ 
$$N_1(x, (1-t)\xi_1+ t \xi_2) < N_1(x,(1-t)\xi_1)+ N_1(x,t \xi_2)= (1-t) N_1(x,\xi_1)+ t N_1(x,\xi_2).$$
 This proves that $ \xi \to N_1(x, \xi)$ is ray-strictly convex.
 Now consider the case $r >1$. Since for any $x \in \Omega,\ \xi \to N_r^{1/r}(x,\xi)=N_1(x,\xi)$  is ray-strictly convex and thanks to the strict convexity of $t\to t^r$ on $\R^+$, we deduce that $\xi \to N_r(x, \xi)=N_1^r(x,\xi)$ is strictly convex when $r>1$.
\end{proof}
From Proposition \ref{1} and from the $r$-homogeneity of $N_r$, we easily deduce the following convexity property of the energy functional:
\begin{pro}\label{1bis}
Assume the hypothesis in Proposition \ref{1}. Then, for $1 \leq r < p_-$
\begin{equation*}
\dot{V}_+^r \cap L^{\infty}(\Omega) \ni v\to   \int_\Omega A(x,\nabla( v^{1/r}))\,dx
\end{equation*}
is ray-strictly convex (if $r>1$, it is even strictly convex).
\end{pro}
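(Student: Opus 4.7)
The strategy is to reduce the claim to a pointwise inequality. For $v_1,v_2\in\dot{V}_+^r\cap L^\infty(\Omega)$ and $\theta\in(0,1)$, set $v_\theta=(1-\theta)v_1+\theta v_2$, $w_i=v_i^{1/r}$, and $w_\theta=v_\theta^{1/r}$. The goal is to prove
$$A(x,\nabla w_\theta)\leq(1-\theta)\,A(x,\nabla w_1)+\theta\,A(x,\nabla w_2)\qquad\text{a.e.\ in }\Omega,$$
after which integration yields convexity of the functional. Using the factorization $A=N_r^{p(x)/r}$, I would first establish the analogous inequality for $N_r$ and then upgrade to $A$ via convexity of $t\mapsto t^{p(x)/r}$, which is well defined and strictly convex because the hypothesis $r<p_-$ forces $p(x)/r>1$.

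For the $N_r$-step, the chain rule gives
$$\nabla w_\theta=w_\theta^{1-r}\bigl[(1-\theta)w_1^{r-1}\nabla w_1+\theta w_2^{r-1}\nabla w_2\bigr],$$
and the bracketed quantity can be rewritten as $v_\theta\bigl(\alpha_1\nabla w_1/w_1+\alpha_2\nabla w_2/w_2\bigr)$ with $\alpha_1=(1-\theta)w_1^r/v_\theta$, $\alpha_2=\theta w_2^r/v_\theta$ summing to $1$. Applying the $r$-homogeneity of $N_r$, then its (ray-strict) convexity from Proposition~\ref{1} to the genuine convex combination, and once more $r$-homogeneity through $N_r(x,\nabla w_i/w_i)=w_i^{-r}N_r(x,\nabla w_i)$, the prefactors collapse by virtue of the identity $w_\theta^{r(1-r)}\cdot v_\theta^{r-1}=1$ (a consequence of $v_\theta=w_\theta^r$). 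One is left with $N_r(x,\nabla w_\theta)\leq(1-\theta)N_r(x,\nabla w_1)+\theta N_r(x,\nabla w_2)$; raising to the $p(x)/r$-power and using convexity of $t\mapsto t^{p(x)/r}$ on the right-hand side then delivers the pointwise bound for $A$.

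The strictness analysis is the main delicacy. For $r>1$, Proposition~\ref{1} supplies genuine (not merely ray-strict) convexity of $N_r$; tracing equality through the chain forces simultaneously $\nabla w_1/w_1=\nabla w_2/w_2$ a.e.\ and $N_r(x,\nabla w_1)=N_r(x,\nabla w_2)$ a.e.\ (the latter coming from strict convexity of $t\mapsto t^{p(x)/r}$), and the $r$-homogeneity of $N_r$ then pins $w_1/w_2\equiv 1$ outside the zero set of $\nabla w_i$; since $w_1,w_2$ share vanishing trace on $\partial\Omega$, this yields $v_1\equiv v_2$. For $r=1$ the weaker ray-strict convexity of $N_1$ only forces $\nabla v_1=\mu(x)\nabla v_2$ with $\mu(x)>0$, which together with the equality $N_1(\nabla v_1)=N_1(\nabla v_2)$ and the boundary condition yields the ray-strict conclusion $v_1=C v_2$ for some $C>0$. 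I expect the principal obstacle to be the careful bookkeeping of exponents in the collapsing identity and the handling of the degenerate set where $\nabla w_i=0$ in the equality discussion.
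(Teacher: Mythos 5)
Your proposal is correct and follows essentially the same route as the paper: write $\nabla(v_\theta^{1/r})$ (equivalently $\nabla v_\theta/v_\theta$ in the paper) as a genuine convex combination with weights $(1-\theta)v_1/v_\theta$ and $\theta v_2/v_\theta$, apply the $r$-homogeneity and (ray-)strict convexity of $N_r=A^{r/p(x)}$ from Proposition \ref{1}, then upgrade to $A$ via convexity of $t\mapsto t^{p(x)/r}$ (valid since $r<p_-$) and integrate, tracing the equality cases exactly as you describe. The only difference is cosmetic (working with $w_i=v_i^{1/r}$ and $\nabla w_i/w_i$ instead of $\nabla v_i/v_i$), and your equality analysis is, if anything, more detailed than the paper's.
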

\begin{proof}
We know that  $ \xi \to N_r(x,\xi) = A^{r/p(x)}(x,\xi)$ is  $r$-positively homogeneous and strictly convex if $r>1$ and for $r=1$ this function is ray-strictly convex. For $v_1, v_2 \in \dot{V}_+^r$ and $\theta \in (0,1)$ define $v=(1-\theta) v_1+ \theta v_2$   
 and we get 
 $$N_r\bigg(x,\dfrac{\nabla v}{v}\bigg) \leq (1-\theta) \dfrac{v_1}{v} N_r\bigg(x,\dfrac{\nabla v_1}{v_1}\bigg) + \theta \dfrac{v_2}{v} N_r\bigg(x,\dfrac{\nabla v_2}{v_2}\bigg).$$
  By homogeneity,
  $$N_r(x, \nabla (v^{1/r}) ) \leq (1-\theta) N_r(x,\nabla (v_1^{1/r})) + \theta N_r(x,\nabla (v_2^{1/r})) $$ and equality holds if and only if $v_1=\lambda v_2$ for some $\lambda>0$. Using the convexity of $t \to t^{p(x)/r}$  for $1 \leq r< p_-$
   we obtain 
   $$\int_{\Omega} A(x, \nabla v^{1/r}) ~dx \leq (1-\theta) \int_{\Omega} A(x, \nabla v_1^{1/r})  ~dx + \theta \int_{\Omega} A(x, \nabla v_2^{1/r}) ~dx. $$ 
   Moreover, if $p(x)\neq r$ equality holds if and only if $v_1=v_2$.
\end{proof}
\noindent From Proposition \ref{1}, we deduce the proof of Picone identity.\\
\textbf{Proof of Theorem \ref{picone}:}
Firstly, we deal with the case $r >1.$
Then from Proposition \ref{1}, for any $x\in \Omega$ the function $\xi \to N_r(x,\xi)=A(x,\xi)^{r/p(x)}$ is strictly convex.
 Let $\xi , \xi_0 \in \mathbb{R}^N \backslash \{0\}$ such that $\xi \neq \xi_0 $ then 
 $$N_r(x,\xi)-N_r(x,\xi_0) > \langle \partial_\xi N_r(x,\xi_0),\xi-\xi_0\rangle.$$
 Setting $\tilde a(x,\xi)=\dfrac{1}{r} \partial_\xi N_r(x,\xi)$, we obtain:
 $$N_r(x,\xi)-\langle \tilde a(x,\xi_0),\xi_0 \rangle >  r \langle \tilde a(x,\xi_0),\xi-\xi_0 \rangle.$$
 Let $v, v_0 >0$ and replacing $\xi, \xi_0$ by $\xi/v$ and $\xi_0/ v_0$ respectively in the above expression, we get
$$N_r\bigg(x,\dfrac{\xi}{v}\bigg) > r \bigg< \tilde a\bigg(x, \dfrac{\xi_0}{v_0}\bigg), \dfrac{\xi}{v}-\dfrac{r-1}{r} \dfrac{\xi_0}{v_0}\bigg>.$$
Taking $\xi =\nabla v $ and $\xi_0= \nabla v_0$ and using $(r-1)$-homogeneity of $\tilde a(x,.)$,
 $$N\bigg(x,\dfrac{\nabla v}{r v^{(r-1)/r}}\bigg) > \dfrac{1}{v_0^{(r-1)/r}} \bigg\langle\tilde a\bigg(x,\dfrac{\nabla v_0}{r v_0^{(r-1)/r}}\bigg), \nabla v- \dfrac{r-1}{r} \dfrac{\nabla v_0}{v_0}v\bigg\rangle$$ 
 where the inequality is strict unless $\dfrac{\nabla v}{v}= \dfrac{\nabla v_0}{v_0}.$\\
Since $v^{1/r},\, v_0^{1/r}\in W^{1,p(x)}(\Omega)\cap L^\infty(\Omega)$, we can write 
$$\nabla(v^{1/r}) = \dfrac{\nabla v}{ r v^{(r-1)/r}}\ \ \ \  \text{and}\ \ \ \  \nabla \bigg( \dfrac{v}{v_0^{(r-1)/r}}\bigg) =\dfrac{1}{v_0^{(r-1)/r}} \bigg( \nabla v - \dfrac{r-1}{r} \dfrac{\nabla v_0}{v_0} v \bigg)$$
and we obtain
\begin{equation}\label{newq}
    N(x,\nabla v^{1/r}) > \bigg\langle \tilde a(x,\nabla v_0^{1/r}), \nabla\bigg(\dfrac{v}{v_0^{(r-1)/r}}\bigg)\bigg\rangle.
\end{equation}
 We have  
\begin{align*}
     \tilde a(x, \nabla v_0^{1/r}) &=\dfrac{1}{r} \partial_\xi N(x,\nabla v_0^{1/r}) =\dfrac{1}{r} \partial_\xi A^{r/p(x)}(x,\nabla v_0^{1/r})\\
     &=\dfrac{1}{p(x)} \partial_\xi A(x,\nabla v_0^{1/r}) A^{\frac{r-p(x)}{p(x)}}(x,\nabla v_0^{1/r})
\end{align*}
 and by replacing in \eqref{newq} we obtain
 $$A^{\frac{r}{p(x)}}(x,\nabla v^{1/r})\  A^{\frac{p(x)-r}{p(x)}}(x,\nabla v_0^{1/r}) 
 > \dfrac{1}{p(x)} \bigg\langle \partial_\xi A(x,\nabla v_0^{1/r}),\nabla \bigg(\dfrac{v}{v_0^{(r-1)/r}}\bigg)\bigg\rangle.$$
Now we deal with the case $r=1$. Let $\xi , \xi_0 \in \mathbb{R}^N \backslash \{0\}$ such that for any $\lambda>0$, $\xi \neq \lambda \xi_0$.  Then, from Proposition \ref{1}, we have that 
 $$N(x,\xi)-N(x,\xi_0) \geq \langle \partial_\xi N(x,\xi_0),\xi-\xi_0\rangle. $$
 Taking $\xi= \nabla v$ and $\xi_0=\nabla v_0$, we deduce
 $$N(x,\nabla v)-N(x,\nabla v_0) \geq \langle \partial_\xi N(x,\nabla v_0),\nabla(v-v_0)\rangle $$
 and 
 $$A^{\frac{1}{p(x)}}(x,\nabla v) A^{\frac{p(x)-1}{p(x)}}(x,\nabla v_0) 
 \geq \frac{1}{p(x)} \langle \partial_\xi A(x,\nabla v_0),\nabla v  \rangle$$
 for any $x \in \Omega$ and the inequality is strict unless $v= \lambda v_0$ for some $\lambda>0$.
 \qed

\noindent The Picone identity also holds for anisotropic operators of the following type: $$\displaystyle\sum_{i=1}^N \nabla_i(b_i(x,\nabla_i u)) = \sum_{i=1}^N \frac{\partial}{\partial x_i}\left(b_i\left(x, \frac{\partial u}{\partial x_i}\right)\right).$$
Precisely we have:
\begin{cor}\label{piciso}
Let $B: \Omega \times \mathbb{R} \to \mathbb{R}^N$ is a continuous and differentiable function such that $B(x,s)= (B_i(x,s))_{i=1,2,\dots N} $ satisfying for any $i$, for any $x\in \Omega$, the map $s \to B_i(x,s)$ is $p_i(x)$-homogeneous and strictly convex with $1<p_i^-\leq p_i(\cdot)\leq p_i^+<\infty$. For any $i$, we define $b_i(x, s)= \frac{1}{p_i(x)} \partial_s B_i (x,s)$. Then, for $v, v_0 \in \dot{V}_+^r \cap L^{\infty}(\Omega)$, we have
$$\displaystyle\sum_{i=1}^N b_i(x, \partial_{x_i} (v_0^{1/r}))\partial_{x_i} \left(\frac{v}{v_0^{\frac{r-1}{r}}}\right) \leq \sum_{i=1}^N B_i^{\frac{r}{p_i(x)}}\left(x, \partial_{x_i} (v^{1/r})\right) B_i^{\frac{p_i(x)-r}{p_i(x)}}\left(x, \partial_{x_i} (v_0^{1/r})\right) .$$
\end{cor}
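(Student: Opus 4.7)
The plan is to deduce Corollary~\ref{piciso} from Theorem~\ref{picone} by a coordinatewise reduction to the one-dimensional case. For each fixed index $i\in\{1,\dots,N\}$, consider the scalar-input operator $A_i:\Omega\times\mathbb R\to\mathbb R$ defined by $A_i(x,s):=B_i(x,s)$. By the standing assumptions on $B$, the map $s\mapsto A_i(x,s)$ is continuous, continuously differentiable, positively $p_i(x)$-homogeneous, and strictly convex; thus $A_i$ satisfies hypotheses $(A1)$ and $(A2)$ of Theorem~\ref{picone} in ambient dimension $N=1$ with exponent $p_i(x)$. Since $v,v_0\in\dot V_+^r\cap L^\infty(\Omega)$ the functions $v^{1/r}$ and $v_0^{1/r}$ lie in $W^{1,p(x)}_0(\Omega)\cap L^\infty(\Omega)$, so the partial derivatives $\partial_{x_i}(v^{1/r})$ and $\partial_{x_i}(v_0^{1/r})$ are well defined a.e.\ in $\Omega$ and may be inserted as scalar arguments.

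Applying the pointwise inequality of Theorem~\ref{picone} to $A_i$ with $\nabla$ replaced by the scalar derivative $\partial_{x_i}$ yields, for a.e.\ $x\in\Omega$,
\begin{equation*}
\frac{1}{p_i(x)}\,\partial_s B_i\!\left(x,\partial_{x_i}(v_0^{1/r})\right)\,\partial_{x_i}\!\left(\frac{v}{v_0^{(r-1)/r}}\right)\leq B_i^{\frac{r}{p_i(x)}}\!\left(x,\partial_{x_i}(v^{1/r})\right)\,B_i^{\frac{p_i(x)-r}{p_i(x)}}\!\left(x,\partial_{x_i}(v_0^{1/r})\right).
\end{equation*}
Recognising the left-hand side as $b_i(x,\partial_{x_i}(v_0^{1/r}))\,\partial_{x_i}(v/v_0^{(r-1)/r})$ by definition of $b_i$, we obtain exactly the $i$-th summand in the statement of the corollary. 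Summing the $N$ inequalities over $i=1,\dots,N$ yields the claim.

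The only point requiring care is that the proof of Theorem~\ref{picone} is genuinely pointwise in $\xi$: the strict (or ray-strict) convexity argument, the substitution $\xi\to\xi/v$, $\xi_0\to\xi_0/v_0$, and the identification of $\nabla(v^{1/r})$ and $\nabla(v/v_0^{(r-1)/r})$ in terms of $\nabla v,\nabla v_0$ all use one-variable chain/quotient rules and so transfer without change when $\mathbb R^N$ is replaced by $\mathbb R$ and $\nabla$ by $\partial_{x_i}$. No cross-term appears, which is why the summation is trivial and the different growth exponents $p_i(x)$ cause no interference. The main (minor) obstacle is thus the bookkeeping of the substitution $A\leftrightarrow B_i$ and the verification that the scalar chain rule still gives $\partial_{x_i}(v^{1/r})=\partial_{x_i}v/(r v^{(r-1)/r})$ a.e.; both are immediate since $v^{1/r}\in W^{1,p(x)}_0(\Omega)\cap L^\infty(\Omega)$.
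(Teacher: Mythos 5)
Your argument is correct and coincides with the paper's own proof: apply Theorem \ref{picone} with $A=B_i$ in the scalar setting (replacing $\nabla$ by $\partial_{x_i}$) to get the $i$-th pointwise inequality, then sum over $i=1,\dots,N$. The additional remarks on the pointwise nature of the Picone argument and the chain rule are fine but not needed beyond what the paper already assumes.
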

  \begin{proof}
    By taking $A(x,s)= B_i(x, s)$ in Theorem \ref{picone}, we obtain $\forall i \in \{1,2,\dots,N\}$
   $$ \dfrac{1}{p_i(x)}  \partial_s B_i(x, \partial_{x_i} (v_0^{1/r})). \partial_{x_i} \left(\dfrac{v}{v_0^{\frac{r-1}{r}}}\right) \leq B_i^{\frac{r}{p_i(x)}}(x, \partial_{x_i} (v^{1/r})). B_i^{\frac{p_i(x)-r}{p_i(x)}}(x, \partial_{x_i} (v_0^{1/r})) $$
   for all $v, v_0 \in \dot{V}_+^r \cap L^{\infty}(\Omega)$ and $i=1,2, \dots, N$.\\
   Then by summing the expression over $i=1,2,\dots, N$, we obtain
   $$\sum_{i=1}^N b_i (x, \partial_{x_i}(v_0^{1/r})).\partial_{x_i} \left(\dfrac{v}{v_0^{\frac{r-1}{r}}}\right) \leq \sum_{i=1}^N B_i^{\frac{r}{p_i(x)}}(x,\partial_{x_i} (v^{1/r})). B_i^{\frac{p_i(x)-r}{p_i(x)}}(x, \partial_{x_i} (v_0^{1/r})) .$$
\end{proof}

\subsection{An extension of the Diaz-Saa inequality }
We prove the first application of Picone identity.\\
\textbf{Proof of Theorem \ref{Diaz-Saa}:}
The Picone identity implies
 $$ A^{r/p(x)}(x,\nabla w_1) A^{(p(x)-r)/p(x)}(x,\nabla w_2)  \geq a(x,\nabla w_2) .\nabla \bigg( \dfrac{w_1^r}{w_2^{r-1}}\bigg).$$
Using the  Young inequality for $r \in [1,p_-]$, we get
 \begin{equation*}
     \dfrac{r}{p(x)}(A(x,\nabla w_1)- A(x,\nabla w_2))+ A(x,\nabla w_2) \geq  a(x,\nabla w_2). \nabla \bigg( \dfrac{w_1^r}{w_2^{r-1}} \bigg). 
 \end{equation*}
 Noting that for any $\xi \in  \mathbb{R}^N, A(x, \xi)= a(x, \xi). \xi,$ we deduce
 \begin{equation}\label{Young1}
      a(x,\nabla w_2) .\nabla \bigg( w_2 - \dfrac{w_1^r}{w_2^{r-1}}\bigg) ~dx \geq \dfrac{r}{p(x)} (A(x,\nabla w_2) -  A(x,\nabla w_1)).
 \end{equation}
  Commuting $w_1$ and $w_2$, we have 
  \begin{equation}\label{Young2}
       a(x,\nabla w_1). \nabla \bigg( w_1 - \dfrac{w_2^r}{w_1^{r-1}}\bigg) \geq  \dfrac{r}{p(x)}  (A(x,\nabla w_1)- A(x,\nabla w_2)).  
  \end{equation}
Summing \eqref{Young1} and \eqref{Young2} and integrating over $\Omega$ yield 
$$\int_{\Omega}  a(x,\nabla w_1) .\nabla \bigg(\dfrac{w_1^r- w_2^r}{w_1^{r-1}}\bigg) ~dx + \int_{\Omega} a(x,\nabla w_2). \nabla \bigg(\dfrac{w_2^r- w_1^r}{w_2^{r-1}}\bigg) \geq 0.$$ The rest of the proof is the consequence of Proposition \ref{1bis}.
\hfill\qed \\
  
Diaz-Saa inequality also holds for anisotropic operators. Here we require that $\xi \to B_i(x,\xi)$ is $p_i(x)$-homogeneous and strictly convex and $b_i(x, \xi)= \dfrac{1}{p_i(x)} \partial_i B_i (x,\xi)$ where  $r\in \mathbb{R}, 1 \leq r \leq \min_{i =1,2,\dots, N} \{{(p_i)}_-\}.$
  \begin{cor}
Under the assumptions of Corollary \ref{piciso} and in addition that there exist $\Lambda >0$ such that for each $i$, $\left|\dfrac{\partial b_i}{\partial s}(x,s)\right| \leq \Lambda |s|^{p(x)-2}.$
Then we have in the sense of distributions, for
 $r\in [1, \min_{i} \{{(p_i)}_-\}]$ and $v, v_0 \in \dot{V}_+^r \cap L^{\infty}(\Omega)$:
  $$\sum_{i=1}^N \int_\Omega \bigg(-\dfrac{\partial_{x_i}(b_i(x,\partial_{x_i} v))}{v^{r-1}(x)} + \dfrac{\partial_{x_i}(b_i(x,\partial_{x_i}v_0))}{v_0^{r-1}(x)}\bigg) (v^r-v_0^r) ~dx \geq 0$$
\end{cor}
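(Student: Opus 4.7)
The plan is to replicate componentwise the argument used in the proof of Theorem~\ref{Diaz-Saa}, with the anisotropic Picone identity of Corollary~\ref{piciso} playing the role of the original Picone inequality. Specifically, I would apply Corollary~\ref{piciso} with its two hypothesis functions set equal to $v^r$ and $v_0^r$, which lie in $\dot{V}_+^r\cap L^\infty(\Omega)$ under the stated regularity on $v,v_0$. The resulting pointwise inequality reads
\begin{equation*}
\sum_{i=1}^N b_i(x,\partial_{x_i} v_0)\,\partial_{x_i}\!\left(\frac{v^r}{v_0^{r-1}}\right)\leq \sum_{i=1}^N B_i^{r/p_i(x)}(x,\partial_{x_i} v)\,B_i^{(p_i(x)-r)/p_i(x)}(x,\partial_{x_i} v_0).
\end{equation*}

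Next I would bound each term on the right by Young's inequality with the conjugate pair $(p_i(x)/r,\,p_i(x)/(p_i(x)-r))$, which is admissible because $r\leq \min_i(p_i)_-\leq p_i(x)$. Combining this with Euler's identity $B_i(x,s)=s\,b_i(x,s)$ (a direct consequence of the $p_i(x)$-homogeneity of $B_i$) and rearranging, exactly as in the passage to~\eqref{Young1} in the isotropic case, one obtains
\begin{equation*}
\sum_{i=1}^N b_i(x,\partial_{x_i} v_0)\,\partial_{x_i}\!\left(v_0-\frac{v^r}{v_0^{r-1}}\right)\geq \sum_{i=1}^N \frac{r}{p_i(x)}\bigl(B_i(x,\partial_{x_i} v_0)-B_i(x,\partial_{x_i} v)\bigr).
\end{equation*}
Interchanging the roles of $v$ and $v_0$ yields the symmetric inequality, and summing the two makes the $\frac{r}{p_i(x)}(B_i-B_i)$ terms cancel. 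What remains is a pointwise nonnegative expression involving the test functions $\frac{v^r-v_0^r}{v^{r-1}}$ and $\frac{v_0^r-v^r}{v_0^{r-1}}$.

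The conclusion then follows by integrating over $\Omega$ and performing integration by parts componentwise. The growth bound $|\partial_s b_i|\leq\Lambda|s|^{p_i(x)-2}$ places each $b_i(x,\partial_{x_i}v),\,b_i(x,\partial_{x_i}v_0)$ in the appropriate dual Lebesgue space, and the boundary terms vanish thanks to $v,v_0\in W_0^{1,p_i(x)}(\Omega)$. The main delicate point — identical to the one in Theorem~\ref{Diaz-Saa} — is the admissibility of the test functions $v^r/v_0^{r-1}$ and $v_0^r/v^{r-1}$ when $r>1$, since they need not themselves lie in $W_0^{1,p_i(x)}(\Omega)$; this is handled by the same truncation/approximation procedure as in the isotropic case, exploiting $L^\infty$ control on the ratio $v/v_0$. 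Beyond the purely notational bookkeeping in the index $i$, the anisotropic structure introduces no substantive new obstacle.
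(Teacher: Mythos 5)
Your proposal is correct and follows essentially the paper's route: the paper proves this corollary simply by invoking Theorem~\ref{Diaz-Saa} with $A=B_i$ and $\nabla$ replaced by $\partial_{x_i}$ for each $i$ and summing, while you unwind that very proof componentwise (anisotropic Picone inequality from Corollary~\ref{piciso}, Young's inequality with exponents $p_i(x)/r$ and $p_i(x)/(p_i(x)-r)$, the Euler identity $B_i(x,s)=s\,b_i(x,s)$, symmetrization in $v,v_0$, and integration), which is the identical chain of steps. Your remark on the admissibility of the test functions via $L^\infty$ control of the ratio $v/v_0$ correctly mirrors the hypotheses of Theorem~\ref{Diaz-Saa}, so nothing substantive is missing.
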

\begin{proof} 
  We apply Theorem \ref{Diaz-Saa}. For $A=B_i : \Omega \times \mathbb{R} \to \mathbb{R}$ and by replacing $\nabla$ by $\partial_{x_i}.$
  \end{proof}
\section{Application of Picone identity to quasilinear elliptic equations}
The aim of this section is to establish Theorem \ref{Application 2}.
\subsection{Preliminary results}
The first lemma is the Picone identity in the context of the  $p(x)$-Laplacian operator.
 \begin{Lem}\label{implem}
 Let $r \in [1,p_-]$ and $u,v \in W_0^{1,p(x)}(\Omega) \cap L^\infty(\Omega)$ two positive functions. Then for any $x\in \Omega$
\begin{equation*}
|\nabla u|^{p(x)} + |\nabla v|^{p(x)}  \geq |\nabla v|^{p(x)-2} \nabla v. \nabla\bigg(\dfrac{u^r}{v^{r-1}}\bigg)+ |\nabla u|^{p(x)-2} \nabla u. \nabla\bigg(\dfrac{v^r}{u^{r-1}}\bigg). 
\end{equation*}
 \end{Lem}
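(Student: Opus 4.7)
The plan is to derive the inequality by applying the Picone identity (Theorem \ref{picone}) to the operator $A(x,\xi)=|\xi|^{p(x)}$ and combining the outcome with Young's inequality. This $A$ is continuous, differentiable in $\xi$ for $\xi\neq 0$, positively $p(x)$-homogeneous and strictly convex in $\xi$, so that hypotheses $(A1)$ and $(A2)$ are satisfied. For this specific choice one has
\[
\frac{1}{p(x)}\partial_\xi A(x,\xi)=|\xi|^{p(x)-2}\xi,\qquad A^{r/p(x)}(x,\xi)=|\xi|^{r},\qquad A^{(p(x)-r)/p(x)}(x,\xi)=|\xi|^{p(x)-r}.
\]

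Next I would apply Theorem \ref{picone} to the pair $(v^r,u^r)$, taking Theorem \ref{picone}'s $v_0=v^r$ and its $v=u^r$. These functions belong to $L^\infty(\Omega)\cap \dot{V}_+^r$ since $u,v$ are positive and lie in $W_0^{1,p(x)}(\Omega)\cap L^\infty(\Omega)$; hence $(v^r)^{1/r}=v$ and $(u^r)^{1/r}=u$ both sit in $W_0^{1,p(x)}(\Omega)$, and the theorem applies to yield the pointwise inequality
\[
|\nabla v|^{p(x)-2}\nabla v\cdot\nabla\!\left(\frac{u^{r}}{v^{r-1}}\right)\;\leq\;|\nabla u|^{r}\,|\nabla v|^{p(x)-r}.
\]

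The third step is a pointwise application of Young's inequality with the conjugate exponents $p(x)/r$ and $p(x)/(p(x)-r)$; these are well defined and both lie in $[1,\infty]$ because $r\in[1,p_-]\subset[1,p(x)]$. This produces
\[
|\nabla u|^{r}|\nabla v|^{p(x)-r}\leq \frac{r}{p(x)}|\nabla u|^{p(x)}+\frac{p(x)-r}{p(x)}|\nabla v|^{p(x)}.
\]
Writing the analogous inequality with the roles of $u$ and $v$ swapped, and summing the two, the coefficients in front of $|\nabla u|^{p(x)}$ and of $|\nabla v|^{p(x)}$ each combine to $r/p(x)+(p(x)-r)/p(x)=1$, yielding precisely the claim of the lemma.

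I do not expect a serious obstacle; the only points requiring mild attention are verifying that the functions entering Theorem \ref{picone} satisfy its hypotheses (immediate from $u,v\in W_0^{1,p(x)}(\Omega)\cap L^\infty(\Omega)$ and positivity) and treating the degenerate case $r=p(x)$, where Young's inequality reduces to a trivial identity and the argument goes through unchanged.
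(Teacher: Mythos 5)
Your proof is correct and follows essentially the route the paper itself intends: Lemma \ref{implem} is just Theorem \ref{picone} specialized to $A(x,\xi)=|\xi|^{p(x)}$ combined with Young's inequality and symmetrization in $u$ and $v$, which is exactly the argument the paper carries out (for general $A$) in the proof of Theorem \ref{Diaz-Saa} via \eqref{Young1}--\eqref{Young2}. No gaps; the edge case $r=p(x)$ and the verification that $u^r,v^r\in \dot{V}_+^r\cap L^\infty(\Omega)$ are handled as you indicate.
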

\noindent Following the proof of Theorem 1.1 in \cite{*8}, we first prove the following comparison principle:
\begin{Lem}\label{CP}
Let $\lambda \geq 0$ and $u, v \in W_0^{1,p(x)}(\Omega)\cap L^{\alpha(x)}(\Omega)$ two nonnegative functions for some function $\alpha \in \mathcal{P}(\Omega)$ satisfying $1<\alpha_-\leq\alpha_+<\infty$. Assume for any $\phi \in W_0^{1,p(x)}(\Omega)$, $\phi\geq 0$:
$$\int_{\Omega} |\nabla u|^{p(x)-2} \nabla u. \nabla \phi + u^{\alpha(x)-1} \chi_{u \geq \lambda} \phi ~dx \geq\int_{\Omega} |\nabla v|^{p(x)-2} \nabla v. \nabla \phi + v^{\alpha(x)-1} \chi_{v \geq \lambda} \phi ~dx$$
where
\begin{equation*}
     \chi_{v\geq \lambda}(x) =
     \left\{
     \begin{alignedat}{2}
         & 1
          && \quad\mbox{ if }\, \lambda \leq v < \infty  \,;
          \\ 
          & 0
          && \quad\mbox{ if }\, 0 \leq v < \lambda \,,
          \end{alignedat}
    \right.
\end{equation*}
and $u \geq v$ a.e. in $\partial\Omega.$ Then $u \geq v$ a.e. in $\Omega.$
\end{Lem}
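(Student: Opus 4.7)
The plan is to test the difference of the two assumed integral inequalities against the nonnegative function $\phi = (v-u)^+$, which belongs to $W_0^{1,p(x)}(\Omega) \cap L^{\alpha(x)}(\Omega)$ since both $u$ and $v$ do. Integrability of the truncated power terms against $\phi$ is secured by the generalized H\"older inequality \eqref{hi} applied to $u^{\alpha(x)-1}, v^{\alpha(x)-1} \in L^{\alpha(x)/(\alpha(x)-1)}(\Omega)$ and $\phi \in L^{\alpha(x)}(\Omega)$. Note that the boundary requirement $u\geq v$ on $\partial\Omega$ is automatic here, since both functions have vanishing trace in $W_0^{1,p(x)}(\Omega)$.

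Using $\phi$ as test function in the inequality for $u$, then subtracting the inequality for $v$, and observing that $\nabla(v-u)^+=\chi_{\{v>u\}}\nabla(v-u)$, I obtain
\begin{equation*}
\int_{\{v>u\}}\!\!\bigl(|\nabla v|^{p(x)-2}\nabla v - |\nabla u|^{p(x)-2}\nabla u\bigr)\cdot\nabla(v-u)\,dx + \int_{\{v>u\}}\!\!\bigl(v^{\alpha(x)-1}\chi_{v\geq\lambda} - u^{\alpha(x)-1}\chi_{u\geq\lambda}\bigr)(v-u)\,dx \leq 0.
\end{equation*}
The first integrand is pointwise nonnegative by the classical strict monotonicity of $\xi\mapsto|\xi|^{p(x)-2}\xi$ valid for $p(x)>1$. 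For the zeroth-order integrand I split $\{v>u\}$ into three disjoint regions according to the position of $\lambda$: on $\{\lambda\leq u<v\}$ both indicators equal one and the integrand reduces to $(v^{\alpha(x)-1}-u^{\alpha(x)-1})(v-u)\geq 0$; on $\{u<\lambda\leq v\}$ only $\chi_{v\geq\lambda}$ survives, so the integrand equals $v^{\alpha(x)-1}(v-u)\geq 0$; and on $\{u<v<\lambda\}$ both indicators vanish and the integrand is zero.

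Hence both terms on the left-hand side are nonnegative yet sum to a nonpositive quantity, so both must vanish. The vanishing of the first, combined with strict monotonicity, forces $\nabla u=\nabla v$ almost everywhere on $\{v>u\}$, so that $\nabla(v-u)^+\equiv 0$ a.e.\ in $\Omega$. Together with the zero trace of $(v-u)^+$ and the Poincar\'e inequality in $W_0^{1,p(x)}(\Omega)$, this yields $(v-u)^+\equiv 0$, i.e.\ $u\geq v$ a.e.\ in $\Omega$.

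The main difficulty is not analytic but rather a matter of careful bookkeeping: because the truncations $\chi_{u\geq\lambda}$ and $\chi_{v\geq\lambda}$ depend on different functions, their joint nonnegative contribution on $\{v>u\}$ only becomes apparent after the three-part case analysis above. A secondary technical point is to make sure that $(v-u)^+$ is a legitimate test function for the hypothesis, which relies on the $L^{\alpha(x)}(\Omega)$-integrability built into the assumptions on $u$ and $v$.
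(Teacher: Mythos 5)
Your proof is correct and follows essentially the same route as the paper: testing with $(v-u)^+$ on the set $\{u<v\}$, using the monotonicity of $\xi\mapsto|\xi|^{p(x)-2}\xi$ and of $t\mapsto t^{\alpha(x)-1}\chi_{t\geq\lambda}$ to squeeze the resulting quantity between $0$ and $0$. You merely spell out the case analysis for the zeroth-order term and the final Poincar\'e step, which the paper leaves implicit.
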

\begin{proof}
Let $\phi =(v-u)^+\in W^{1,p(x)}_0(\Omega)$ and $\Omega_1 = \{x \in \Omega : u(x) < v(x) \}.$ Then 
\begin{align*}
    0 \leq - \int_{\Omega_1} (|\nabla u|^{p(x)-2} \nabla u &- |\nabla v|^{p(x)-2} \nabla v) .\nabla (u -v) ~dx\\
    & -\int_{\Omega_1}(u^{\alpha(x)-1} \chi_{u \geq \lambda} - v^{\alpha(x)-1} \chi_{v \geq \lambda}) (u-v )~dx \leq 0   
\end{align*}
from which we obtain $u \geq v$ a.e. in  $\Omega.$
\end{proof}
\noindent Using lemma \ref{CP}, we show the following strong maximum principle:
\begin{Lem}\label{SMPl}
Let $h,\ l \in L^{\infty}(\Omega)$ be nonnegative functions, $h>0$ and $k:\Omega\times \R^+\to \R^+$. Let $\alpha, \beta \in \mathcal{P}(\Omega)$ be two functions such that $1<\beta_-\leq\beta_+<\alpha_-\leq\alpha_+<\infty$. Let $u \in C^1(\overline\Omega)$ be nonnegative and a nontrivial solution to
\begin{equation}\label{61}
     \left\{
         \begin{alignedat}{2} 
             {} -\Delta_{p(x)} u  +l(x) u^{\alpha(x)-1} 
             & {}= h(x) u^{\beta(x)-1} + k(x,u)
             && \quad\mbox{ in }\, \Omega \,;
             \\
             u & {}= 0
             && \quad\mbox{ on }\, \partial\Omega\,.
          \end{alignedat}
     \right.
\end{equation}
Assume in addition either\\
\textbf{(c1)} $\dfrac{l}{h} \in L^\infty(\Omega)$ \\ or \\
\textbf{(c2)} $k:\Omega\times \R^+\to \R^+$ satisfying $\displaystyle\liminf_{t \to 0^+} \frac{k(x,t)} {t^{\alpha(x)-1}} > \|l\|_{L^{\infty}}$ uniformly in $x$.\\ 
 Then $u$ is positive in $\Omega$.
\end{Lem}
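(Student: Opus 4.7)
The plan is to argue by contradiction: assuming $u$ vanishes at some interior point, I will show that $u$ is a nontrivial nonnegative weak supersolution of the plain $p(x)$-Laplacian on a small ball around that point, and then invoke a V\'azquez-type strong maximum principle for $-\Delta_{p(x)}$ to contradict the existence of an interior zero. Set $\Omega_+ := \{x \in \Omega : u(x) > 0\}$, which is open and nonempty by the continuity and nontriviality of $u$. If $\Omega_+ \neq \Omega$, the connectedness of $\Omega$ forces $\partial\Omega_+ \cap \Omega$ to be nonempty; pick $x_0$ there, so $u(x_0) = 0$ while every neighbourhood of $x_0$ meets $\Omega_+$. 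Choose $R > 0$ so small that $\overline{B(x_0,R)} \subset \Omega$ and, by continuity, $u \leq \eta$ in $B(x_0,R)$ with $\eta \in (0,1]$ to be fixed.

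The core step is then to select $\eta$ small enough that \eqref{61} collapses on this ball to the superharmonic inequality $-\Delta_{p(x)} u \geq 0$. Rewrite the equation as $-\Delta_{p(x)} u = h\,u^{\beta(x)-1} + k(x,u) - l\,u^{\alpha(x)-1}$ and absorb the negative term case by case. In case (c1), set $C := \|l/h\|_{L^\infty}$; for $0 < u \leq \eta \leq 1$, using $\alpha(x)-\beta(x) \geq \alpha_- - \beta_+ > 0$, one obtains
\[
l(x)\,u^{\alpha(x)-1} \leq C\,h(x)\,u^{\beta(x)-1}\,u^{\alpha(x)-\beta(x)} \leq C\,\eta^{\alpha_- - \beta_+}\,h(x)\,u^{\beta(x)-1},
\]
so choosing $\eta$ with $C\,\eta^{\alpha_- - \beta_+} \leq 1/2$ yields $-\Delta_{p(x)} u \geq \tfrac{1}{2}\,h\,u^{\beta(x)-1} + k \geq 0$ weakly in $B(x_0,R)$. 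In case (c2), the uniform liminf hypothesis supplies $\mu > \|l\|_{L^\infty}$ and $\delta \in (0,1]$ with $k(x,t) \geq \mu\,t^{\alpha(x)-1}$ for $0 < t \leq \delta$ uniformly in $x$; taking $\eta \leq \delta$ gives $k(x,u) - l(x)\,u^{\alpha(x)-1} \geq (\mu - \|l\|_{L^\infty})\,u^{\alpha(x)-1} \geq 0$, hence again $-\Delta_{p(x)} u \geq 0$ weakly in $B(x_0,R)$.

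Finally, since $u \in C^1(\overline{B(x_0,R)})$ is nonnegative, $\not\equiv 0$ in $B(x_0,R)$ (because $x_0 \in \partial\Omega_+$), and a weak supersolution of $-\Delta_{p(x)} w \geq 0$, a strong maximum principle for the $p(x)$-Laplacian forces $u > 0$ in $B(x_0,R)$, contradicting $u(x_0)=0$; hence $\Omega_+ = \Omega$. I expect the main technical obstacle to be the choice of $\eta$ in case (c1): the uniform bound $u^{\alpha(x)-\beta(x)} \leq \eta^{\alpha_- - \beta_+}$ is precisely what lets the good term $h\,u^{\beta-1}$ dominate $l\,u^{\alpha-1}$ near zero, and this is exactly where the strict inequality $\beta_+ < \alpha_-$ must be used. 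Case (c2) is complementary: it bypasses the pointwise comparison between $h$ and $l$ by exploiting the superlinear positivity of $k$ at the origin to absorb the unwanted absorption term directly.
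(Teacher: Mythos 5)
Your reduction of the zero--order terms is correct and is in substance the same computation the paper performs: under \textbf{(c1)} the bound $l(x)u^{\alpha(x)-1}\leq \|l/h\|_{L^\infty}\,\eta^{\alpha_--\beta_+}h(x)u^{\beta(x)-1}$ for $u\leq\eta\leq 1$, and under \textbf{(c2)} the choice of $\mu>\|l\|_{L^\infty}$ and $\delta$ from the uniform liminf, are exactly how the paper neutralizes the absorption term, and your topological setup ($\Omega_+$ open, $\partial\Omega_+\cap\Omega\neq\emptyset$ by connectedness of the domain) is sound. The genuine difference lies in what happens next. The paper does not localize to a ball where $u\leq\eta$: it keeps a truncated absorption and shows $-\Delta_{p(x)}u+l(x)u^{\alpha(x)-1}\chi_{u\geq\lambda}\geq 0$ in all of $\Omega$, and then \emph{proves} the strong maximum principle following Zhang \cite{*8} -- an explicit exponential barrier $w(r)=j(2C-r)$ is built on an annulus joining a point where $u>0$ to the zero point, the weak comparison principle of Lemma \ref{CP} gives $w\leq u$ there, and $j'(0)>0$ contradicts $\nabla u(x_1)=0$. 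You instead shrink to a ball centred at the zero point so that the right-hand side becomes nonnegative and invoke a V\'azquez-type strong maximum principle for nonnegative supersolutions of $-\Delta_{p(x)}$ as a black box. Such a result is indeed available (Theorem 1.1 of \cite{*8} with trivial absorption, or via the Harnack inequality for $p(x)$-supersolutions, applicable here since $p\in C^1(\overline\Omega)$), so your proof is complete \emph{provided} you attach a precise citation; without it, the step you outsource is precisely the analytic heart of the lemma, which the paper reproves for self-containedness. What the paper's route buys in exchange for the extra work is the explicit barrier subsolution on the annulus, which is reused verbatim to obtain the Hopf boundary-point lemma (Lemma \ref{HMP}); your black-box version yields positivity but not that barrier, so the subsequent Hopf argument would still need either the construction or a further citation.
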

\begin{proof} We follow the idea of the proof of Theorem 1.1 in \cite{*8}. For the reader's convenience we have included the detailed proof. We rewrite our equation \eqref{61}
under condition {\it \textbf{(c1)}} as follows:
$$-\Delta_{p(x)} u+ l(x)u^{\alpha(x)-1} \chi_{u \geq \lambda} \geq h(x) u^{\beta(x)-1} (1- \chi_{u \geq \lambda})\left(1-\frac{l(x)}{h(x)} u^{\alpha(x)-\beta(x)}\right),$$
since $\frac{l}{h} \in L^{\infty}(\Omega)$, we choose $\lambda \in (0,1)$ small enough such that for any $u(x) \leq \lambda$, we have $1- \frac{l(x)}{h(x)} u^{\alpha(x)-\beta(x)} \geq 1- \|\frac{l}{h}\|_{L^{\infty}(\Omega)} \lambda^{\alpha_- - \beta_+} \geq 0$.\\
Assuming condition {\it \textbf{(c2)}}, we have
$$-\Delta_{p(x)} u+ l(x)u^{\alpha(x)-1} \chi_{u \geq \lambda} \geq k(x,u) - (1- \chi_{u \geq \lambda})l(x) u^{\alpha(x)-1}.
$$
We choose $\lambda$ small enough such that for any $u(x)\leq \lambda$, we have $k(x,u)- l(x) u^{\alpha(x)-1} \geq 0.$ Hence under both conditions, we get for any $x\in \Omega$,
$$-\Delta_{p(x)} u+ l(x)u^{\alpha(x)-1} \chi_{u \geq \lambda} \geq 0.$$
Suppose that there exists $x_1$ such that $u(x_1)=0$ then using the fact that $u$ is nontrivial, we can find $ x_2 \in \Omega$ and a ball $B(x_2, 2C)$ in  $\Omega$ such that $x_1 \in \partial B(x_2, 2C)$ and $u > 0$ in $B(x_2, 2C).$ \\
Let $a = \inf\{ u(x) : |x-x_2|= C\}$ then $a >0 $ and choosing $x_2$ close enough to $x_1$ such that  $0< a< \lambda$ and $\nabla u(x_1)=0$ since $u(x_1)=0.$ \\
Denote the annulus $P = \{x \in \Omega : C < |x_1-x_2| < 2C \}$. We define $p_1=p(x_1),\ M= \sup\{|\nabla p(x)|: x \in P\}, \ \ b = 8M+2, \ \ l_1= -b \ \ln\left(\frac{a}{C}\right)+ \frac{2(N-1)}{C}$ and
 $$j(t)= \dfrac{a}{e^{\frac{l_1 C}{p_1-1}} -1} \bigg(e^{\frac{l_1 t}{p_1-1}} -1\bigg) \ \ \ \ \forall \  t \in [0,C].$$
We have
 $$ \frac{a}{C} e^{\frac{-l_1 C}{p_1-1}} < j'(0) \leq j'(t) \leq j'(C) <  \frac{a}{C} e^{\frac{l_1 C}{p_1-1}} $$
and then  
\begin{align}\label{65}
    \bigg(\dfrac{a}{C}\bigg)^3 \leq j'(t) \leq 1 \ \ \forall \ t \in [0,C].
 \end{align}
\noindent We choose $C < 1$ and using $\nabla u(x_1)=0$, $\frac{a}{C} < 1$ small enough such that for any $x \in P $
 \begin{align}\label{66}
     \dfrac{p(x)-1}{p_1-1} \geq \dfrac{1}{2}.
		\end{align}
Without loss of generality we can take $x_2=0$ and we set $r= |x-x_2|= |x|$, $t= 2C-r$. For $t \in [0,C]$ and $r\in [C, 2C]$, denote $w(r)= j(2C-r)= j(t)$, then  $$w'(r)= -j'(t), \ \ w''(t)= j''(t).$$
From \eqref{65} and \eqref{66}, we obtain
 \begin{align*}
     \div (|\nabla w|^{p(x)-2} \nabla w)&= (p(x)-1))(j'(t))^{p(x)-2} j''(t) -  \dfrac{N-1}{r} (j'(t))^{p(x)-1} \\
     &-(j'(t))^{p(x)-1} \ln (j'(t)) \sum_{i=1}^n \dfrac{\partial p}{\partial x_i} . \dfrac{x_i}{r}\\
     & \geq (j'(t))^{p(x)-1}\bigg(\dfrac{1}{2} l_1 + M \ln(j'(t))- \dfrac{N-1}{r}\bigg)\\
     & \geq  - \ln\bigg(\dfrac{a}{C}\bigg) (j'(t))^{p(x)-1} \geq 0.
\end{align*}
Since $ j(t) < a< \lambda $, we deduce
$$-\div (|\nabla w|^{p(x)-2} \nabla w) + w^{\alpha(x)-1} \chi_{w \geq \lambda} \leq 0.$$
On $\partial P$, $w(C)= j(C)= a \leq u(x) \ \ \text{and} \ \ w(2C)=j(0)= 0 \leq u(x).$
Then by Lemma \ref{CP}, we obtain $w \leq u$ on $P$. Finally, 
\begin{equation*}
\begin{split}
\lim_{s \to 0^+} \dfrac{u(x_1+s(x_2-x_1))-u(x_1)}{s} &\geq \lim_{s \to 0^+} \dfrac{w(x_1+s(x_2-x_1))-w(x_1)}{s}\\
&= j'(0) > 0
\end{split}
\end{equation*}
which contradicts $\nabla u(x_1)=0.$ Therefore, $u>0$ in $\Omega$.
\end{proof}
\begin{remark}
Conditions \textbf{(c1)} and \textbf{(c2)} can be replaced by the condition that there exists $t_0 $ such that $h(x)t^{\beta(x)-1} +k(x,t)-l(x) t^{\alpha(x)-1} \geq 0$ for all $0<t<t_0$ and $x\in\Omega$.
\end{remark}
\begin{Lem}\label{HMP}
Under the same conditions of $h,l,k$ as in Lemma \ref{SMPl}, let $u \in C^1(\overline{\Omega})$ be the nonnegative and nontrivial solution of \eqref{61}, $x_1 \in \partial \Omega $, $u(x_1)=0$ and  $\Omega$ satisfies the interior ball condition at $x_1$, then $\dfrac{\partial u}{\partial \vec n} (x_1)<0$ where $\vec n$ is the outward unit normal vector at $x_1.$
\end{Lem}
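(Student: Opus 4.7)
\noindent\textbf{Proof plan for Lemma \ref{HMP}.} The idea is to reuse verbatim the radial barrier constructed in the proof of Lemma \ref{SMPl}, now placed in an interior ball that touches $\partial\Omega$ precisely at $x_1$, and then to compare normal derivatives on the boundary. First, by Lemma \ref{SMPl} we know that $u>0$ in $\Omega$. The interior ball condition at $x_1$ provides a ball $B(x_2,2C) \subset \Omega$ with $x_1 \in \partial B(x_2,2C)$. After a translation we may assume $x_2=0$. Since $u$ is continuous on the compact sphere $\{|x|=C\} \subset \Omega$ and is strictly positive there, $a \eqdef \inf\{u(x)\,|\,|x|=C\} > 0$. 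By shrinking $C$ we may further assume $a<\lambda$, where $\lambda$ is the threshold used in Lemma \ref{SMPl} (it is produced from either \textbf{(c1)} or \textbf{(c2)} and is independent of the choice of ball).

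Next, on the annulus $P=\{C<|x|<2C\}$ I would define the radial barrier
\[
w(r) = j(2C-r), \qquad j(t)= \dfrac{a}{e^{l_1C/(p_1-1)}-1}\bigl(e^{l_1t/(p_1-1)}-1\bigr),
\]
with the same constants $p_1,M,b,l_1$ as in the proof of Lemma \ref{SMPl}, taking care to adjust them to this new ball. The pointwise computation reproduced from that proof shows
\[
-\div(|\nabla w|^{p(x)-2}\nabla w) + w^{\alpha(x)-1}\chi_{w\geq\lambda} \leq 0 \quad\mbox{ in } P,
\]
so $w$ is a subsolution of the truncated equation. Using the reduction of \eqref{61} carried out in Lemma \ref{SMPl} (under either assumption \textbf{(c1)} or \textbf{(c2)}), $u$ is a supersolution of the same truncated equation in $P$. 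On $\partial P$ we have $w=a\leq u$ on the inner sphere by definition of $a$, and $w=0\leq u$ on the outer sphere by nonnegativity of $u$. The comparison principle (Lemma \ref{CP}) therefore yields $w\leq u$ in $P$.

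Finally, $w(x_1)=0=u(x_1)$ and at $x_1\in\partial B(0,2C)\cap\partial\Omega$ the outward unit normal $\vec n$ coincides with $x_1/|x_1|$, so
\[
\frac{\partial u}{\partial \vec n}(x_1) \leq \frac{\partial w}{\partial \vec n}(x_1) = -j'(0) < 0
\]
by the lower bound on $j'$ in \eqref{65}, which proves the claim. The only place where care is required is ensuring that the reduction of \eqref{61} (stripping the $h$, $k$ terms to obtain the clean inequality $-\Delta_{p(x)}u + l(x)u^{\alpha(x)-1}\chi_{u\geq\lambda}\geq 0$) remains valid on the annulus $P$ with the same $\lambda$, and that Lemma \ref{CP} may be localized from $\Omega$ to the annular subdomain $P$ with its natural boundary values. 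Both are direct: the reduction is pointwise in $\Omega$ and Lemma \ref{CP} is proved by testing against $(w-u)^+\in W^{1,p(x)}_0(P)$, so no new ingredient beyond those already established in Section 3 is required.
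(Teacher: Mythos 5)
Your proposal is correct and is essentially the paper's own argument: the proof of Lemma \ref{HMP} in the paper likewise places the barrier $w$ built in the proof of Lemma \ref{SMPl} in the annulus $P$ of the interior tangent ball at $x_1$, obtains $w\leq u$ in $P$ via the comparison principle of Lemma \ref{CP}, and concludes $\frac{\partial u}{\partial \vec n}(x_1)\leq \frac{\partial w}{\partial \vec n}(x_1)=-j'(0)<0$. The only point to tighten is your choice $a=\inf_{|x-x_2|=C}u$ with $C$ shrunk: since $\nabla u(x_1)=0$ is not available here, shrinking $C$ need not make the ratio $a/C$ small as required for \eqref{65}, so one should instead (as the paper implicitly does) take any $a\leq\min\{\inf_{|x-x_2|=C}u,\ \lambda\}$ chosen small enough for the barrier estimates, which changes nothing else in the argument.
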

\begin{proof}
Choose $C>0$ small enough such that $B(x_2, 2C) \subset \Omega,\ x_1 \in \partial B(x_2, 2C)$. Then $x_2= x_1+ 2C\vec n$, where $\vec n$ is the outward normal at $x_1$. Denote $P=\{x \in \Omega : C< |x-x_2| <2C\} $ and by choosing $a$ such that $0<a< \lambda$, then by Lemma \ref{SMPl}, there exist a subsolution $w \in C^1(\overline{P}) \cap C^2(P)$ of \eqref{61} in $P$ and $w$ satisfies $w \leq u$ in $P$ with $w(x_1)=0, \frac{\partial w}{\partial  \vec n}(x_1) <0.$ Hence, we get $\frac{\partial u}{\partial  \vec n}(x_1) \leq \frac{\partial w}{\partial  \vec n}(x_1) <0.$ 
\end{proof}
\subsection{Proof of Theorem \ref{Application 2}}

\textbf{Proof of Theorem \ref{Application 2}:} We perform the proof along five steps. First we introduce notations. Define $F,\,G : {\Omega} \times \mathbb{R} \to \mathbb{R}^{+}$ as follows:
  \begin{equation*}
     F(x,t) =
     \left\{
     \begin{alignedat}{2}
         & \dfrac{h(x)}{q(x)} t^{q(x)}
          && \quad\mbox{ if }\, 0 \leq t < \infty  \,;
          \\ 
          & 0
          && \quad\mbox{ if }\, -\infty < t < 0 \,,
          \end{alignedat}
    \right.
\end{equation*}
and 
\begin{equation*}
     G(x,t) =
     \left\{
     \begin{alignedat}{12}
         & \dfrac{l(x)}{s(x)} t^{s(x)} 
          && \quad\mbox{ if }\, 0 \leq t < \infty  \,;
          \\ 
          & 0
          && \quad\mbox{ if }\, -\infty < t < 0 \,.
          \end{alignedat}
    \right.
\end{equation*}
We also extend the domain of $f$ and $g$ to all $ {\Omega} \times \mathbb{R} $ by setting $$f(x,t)= \dfrac{\partial F}{ \partial t}(x,t)=0 \ \text{and}\ g(x,t)= \dfrac{\partial G}{ \partial t}(x,t)=0\  \text{for} \  (x,t) \in {\Omega} \times (- \infty, 0).$$  
Define the energy functional 
$ \mathcal{E} : W_0^{1,p(x)} (\Omega) \cap L^{s(x)}(\Omega) \to \mathbb{R}$ by
 \begin{equation}\label{13}
     \mathcal{E} (u) = \int_{\Omega} \dfrac{|\nabla u|^{p(x)}}{p(x)} ~dx + \int_{\Omega} G(x,u(x)) ~dx - \int_{\Omega} F(x,u(x)) ~dx.
\end{equation}
\noindent{\bf Step 1 :} Existence of a global minimizer\\
Since $W_0^{1,p(x)}(\Omega)\hookrightarrow L^{q(x)}(\Omega)$ (see Theorem $3.3.1$ and Theorem $8.2.4$ in \cite{*6}), the functional $\mathcal{E}$ is well-defined for every function $u \in W_0^{1,p(x)}(\Omega) \cap L^{s(x)}(\Omega).$\\
For $\|u\|_{W_0^{1,p(x)}}$ large enough: by \eqref{a} or \eqref{b}
 \begin{align*}
     \mathcal{E}(u) \geq \int_{\Omega} \dfrac{|\nabla u|^{p(x)}}{p(x)} - \int_{\Omega} \dfrac{h(x)}{q(x)} u^{q(x)} &\geq \dfrac{1}{p_-} \|\nabla u\|^{p_-}_{L^{p(x)}} - C\rho_q(u)\\
     & \geq \dfrac{1}{p_-} \|u\|^{p_-}_{W_0^{1,p(x)}} - C \|u\|_{W_0^{1,p(x)}}^{\tilde q}
 \end{align*}
where $\displaystyle\tilde q=\left\{\begin{array}{cc}
q_-&\mbox{ if }\|u\|_{L^{p(x)}}\leq 1\\
q_+&\mbox{ if }\|u\|_{L^{p(x)}} > 1
\end{array}\right.
$. Since $p_-> q_+$, this implies
$$ \mathcal{E}(u) \to \infty \ \ \text{as} \ \ \|u\|_{W_0^{1,p(x)}} \to +\infty.  $$
We argue similarly when  $\|u\|_{L^{s(x)}}\to \infty$ and we deduce $\mathcal E$ is coercive. The continuity of $\mathcal{E}$ on $W_0^{1,p(x)}(\Omega) \cap L^{s(x)}(\Omega)$ is given by Theorem 3.2.8 and 3.2.9 of \cite{*6}. Hence we get the existence of  at least  one global minimizer, say $u_0$, to \eqref{13}.\\
 
\noindent {\bf Step 2:}  Claim: $u_0\geq 0$ and $u_0\not\equiv 0$\\
Since $u_0$ is a global minimizer of $\mathcal{E}$ then $\mathcal{E}(u_0^+) \geq \mathcal{E}(u_0)$ where $u_0^+= \max\{u_0,0\} \in W_0^{1,p(x)}(\Omega)$. Set $\Omega^-=\{ x \in \Omega: u_0(x) < 0\}.$  We have
 \begin{align*}
     \mathcal{E} (u_0) &=\int_{\Omega} \dfrac{|\nabla u_0|^{p(x)}}{p(x)} ~dx + \int_{\Omega} G(x,u_0(x)) ~dx - \int_{\Omega} F(x,u_0(x)) ~dx\\
     &= \mathcal{E}(u_0^+) + \int_{\Omega^-} \dfrac{|\nabla u_0|^{p(x)}}{p(x)} ~dx 
 \end{align*}
which implies $\displaystyle \int_{\Omega^-} \frac{|\nabla u_0|^{p(x)}}{p(x)} = 0$ {\it i.e.} $\nabla u_0 (x) =0$ a.e. in $\Omega^-$ then by \eqref{a} \quad and \eqref{b} we have  $u_0=0$ a.e in $\Omega^-.$ This implies that $u_0 \geq 0$.\\
 In order to show that $u_0 \not\equiv 0 $ in $\Omega$, we construct a function $v$ in $W^{1,p(x)}_0(\Omega)\cap L^\infty(\Omega)$ such that $\mathcal{E}(v)<0 =\mathcal{E}(0).$ Precisely, consider $v=t \phi$ where $\phi \in C_c^1(\Omega)$, $\phi\geq 0$, $\phi \not\equiv 0 $ in $\Omega$ and for $0< t \leq 1$ small enough, we have
$$\mathcal{E}(v) \leq t^{q_+}(c_1t^{p_--q_+} +c_2t^{s_--q_+}-c_3) $$
where for any $i\in\{1,2,3\}$, $c_i$ are suitable constants independent of $t$. Hence, choosing $t$ small enough the right-hand side is negative and we conclude that $\mathcal{E}(t \phi) < 0 = \mathcal{E}(0)$ which implies $u_0 \not\equiv 0$. \\
\ \\
{\bf Step 3:} $u_0$ satisfies the equation in \eqref{10}\\
Since $u_0$ is a global minimizer and $\mathcal{E}$ is $C^1$ on $W^{1,p(x)}_0(\Omega)\cap L^{s(x)}(\Omega)$, then 
for any $\phi\in W^{1,p(x)}_0(\Omega)\cap L^{s(x)}(\Omega)$, we have
$$\langle \mathcal{E}'(u_0),\phi\rangle=\int_{\Omega} |\nabla u_0|^{p(x)-2} \nabla u_0. \nabla \phi ~dx -\int_{\Omega} f(x,u_0) \phi ~dx +\int_{\Omega} g(x,u_0) \phi ~dx=0.$$
\noindent {\bf Step 4:} Regularity and positivity of weak solutions\\
First we prove that all nonnegative weak solutions of \eqref{10} belongs to $L^{\infty}(\Omega)$ which yields $C^{1,\alpha}(\overline\Omega)$ regularity.\\
 Let $K(x,t)= h(x) t^{q(x)-1} - l(x) t^{s(x)-1}$ and $\Lambda  \eqdef  \max\bigg\{\left\vert\left\vert\dfrac{h}{l}\right\vert\right\vert_{L^{\infty}},1 \bigg\}^{1/(s_--q_+)}$   \\
 Then it is not difficult to show that for any $t \geq \Lambda$, $K(x,t) \leq 0.$ 
Let $u$ be a nonnegative function satisfying weakly the equation in \eqref{10}. Then for any $\phi\in W^{1,p(x)}_0(\Omega)\cap L^{s(x)}(\Omega)$,
 $$ \int_{\Omega} |\nabla u|^{p(x)-2} \nabla u. \nabla \phi ~dx= \int_{\Omega} (h(x) u^{q(x)-1} -l(x) u^{s(x)-1}) \phi(x) ~dx.$$
 Taking the testing function $\phi(x)=(u-\Lambda)^+$, we get
$$\int_{\Omega} |\nabla (u-\Lambda)^+|^{p(x)} \leq 0. $$
By using \eqref{b}, we deduce $\|(u-\Lambda)^+\|_{W^{1,p(x)}_0}=0 $ which implies $u(x) \leq \Lambda$. \\
From Theorem 1.2 in \cite{*2}, we get $u \in C^{1, \alpha}(\overline\Omega)$ for some $\alpha\in (0,1)$.
\\ Furthermore assuming $x\to \frac{l(x)}{h(x)}$ belongs to $L^\infty(\Omega)$, Lemma \ref{SMPl} yields $u>0$ in $\Omega$.\\
\noindent{\bf Step 5:} Uniqueness of the positive solution of \eqref{10}\\
Let $u,\ v $ be two positive solutions of \eqref{10}. Thus for any $\phi,\ \tilde \phi \in W^{1,p(x)}_0(\Omega)\cap L^{s(x)}(\Omega)$,
 $$\int_{\Omega} |\nabla u|^{p(x)-2} \nabla u. \nabla \phi ~dx = \int_{\Omega} (h(x) u^{q(x)-1} -l(x) u^{s(x)-1}) \phi(x) ~dx $$
 and
 $$\int_{\Omega} |\nabla v|^{p(x)-2} \nabla v. \nabla \tilde{\phi} ~dx = \int_{\Omega} (h(x) v^{q(x)-1} -l(x) v^{s(x)-1}) \Tilde{\phi}(x) ~dx. $$
By the previous steps, $u$ and $v$ belong to $C^1(\overline{\Omega})$ and Lemma \ref{HMP} implies $ u,v \in C_d^0(\overline\Omega)^+ $. Hence taking the testing functions as $\phi= \dfrac{(u^{p_-}-v^{p_-})^+}{u^{p_--1}}$ and $\Tilde{\phi} = \dfrac{(v^{p_-}-u^{p_-})^-}{v^{p_--1}} \in W_0^{1,p(x)}(\Omega)$ (with the following notation $t^-\eqdef\max\{0,-t\}$) and from Lemma \ref{implem} we obtain
 \begin{align*}
     0 \leq \int_{\{u >v\}} (|\nabla u|^{p(x)-2} \nabla u &- |\nabla v|^{p(x)-2} \nabla v).\nabla (u-v) dx\\
     &= \int_{\{u >v\}} h(x) (u^{q(x)-p_-}-v^{q(x)-p_-}) (u^{p_-}-v^{p_-}) ~dx \\
     &\ \ \ \ \ \ + \int_{\{u>v\}} l(x) (v^{s(x)-p_-}-u^{s(x)-p_-}) (u^{p_-}-v^{p_-}) ~dx .
 \end{align*}
Since $q_+\leq p_-\leq s_-$,  the both terms in right-hand side are nonpositive. This implies $v(x) \geq u(x)$ a.e in $\Omega$.\\
Finally reversing the role of $u$ and $v$, we get $u=v$. \qed

\begin{remark}
Theorem \ref{Application 2} still holds when the condition $\dfrac{l}{h} \in L^{\infty}(\Omega)$ is replaced by $p_+ < s_-$ and using strong maximum principle in \cite{*8}.
\end{remark}

\section{Application to Fast diffusion equations }
In this section, we establish Theorems \ref{wea} and \ref{contr}. To this aim, we use a time semi-discretization method associated to \eqref{WeakS1}. With the help of accurate energy estimates about the related quasilinear elliptic equation and passing to the limit as the discretization parameter goes to $0$, we prove the existence and the properties of weak solutions to \eqref{P}. In the subsection below, we study the associated elliptic problem. 
\subsection{Study of the quasilinear elliptic problem associated to F.D.E.}\label{section1}
Consider the following problem
\begin{equation}\label{E_1}
     \left\{
         \begin{alignedat}{2} 
             {} v^{2q-1} -\lambda \Delta_{p(x)} v  
             & {}= h_0(x) v^{q-1} + \lambda f(x,v)
             && \quad\mbox{ in }\,  \Omega \,;
             \\
             v & {}> 0
             && \quad\mbox{ in }\,  \Omega \,;
             \\
             v & {}= 0
             && \quad\mbox{ on }\,  \partial\Omega \,.
          \end{alignedat}
     \right.
\end{equation}
Assume $h_0\in L^\infty(\Omega)^+$ and $f$ satisfies $(f_1)$-$(f_3)$. Then from $(f_3)$, we have $(f_0): \lim_{s  \to +\infty} \dfrac{f(x,s)}{s^{p_- -1}} =0$ uniformly in $x \in \Omega.$ Therefore,
for any $\epsilon >0,$ there exists a positive constant $C_{\epsilon}$ such that for any  $(x,s) \in \Omega \times \mathbb{R}^+$:
\begin{equation}\label{infty} 
0 \leq f(x,s) \leq C_{\epsilon} + \epsilon s^{p_- -1}. 
\end{equation}
We have the following preliminary result about \eqref{E_1}:
\begin{thm}\label{exis}
Let $\lambda>0$, $ q\in(1, p_-]$, $f:\Omega\times \R^+ \to \R^+$ satisfying $(f_0)$ and $(f_1)$ and $h_0 \in L^{\infty}(\Omega)^+$. Then there exists a weak solution $v \in C^1(\overline{\Omega})$ to \eqref{E_1}, {\it i.e.}  for any  $\phi\in {\mathbf{W}} \eqdef  W^{1,p(x)}_0(\Omega)\cap L^{2q}(\Omega)$
\begin{equation}\label{64}
    \int_{\Omega} v^{2q-1} \phi ~dx + \lambda \int_{\Omega} |\nabla v|^{p(x)-2} \nabla v. \nabla \phi ~dx = \int_{\Omega} h_0 v^{q-1} \phi ~dx + \lambda \int_{\Omega} f(x,v) \phi ~dx.
\end{equation}
In addition, if $(f_2)$ and $(f_3)$ hold then $v \in C_d^0(\overline \Omega)^+$. Moreover if $v_1,\ v_2 \in C_d^0(\overline \Omega)^+$ are two weak solutions to \eqref{E_1} corresponding to $h_0=h_1,\ h_2\in L^\infty(\Omega)^+$ respectively, then we have
\begin{equation}\label{Acr}
\|(v_1^q-v_2^q)^+\|_{L^2} \leq \|(h_1-h_2)^+\|_{L^{2}}.
\end{equation}\end{thm}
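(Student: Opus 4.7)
The plan is to combine a direct minimization on the reflexive space $\mathbf{W} = W_0^{1,p(x)}(\Omega) \cap L^{2q}(\Omega)$ with the Picone/Diaz-Saa machinery of Section \ref{section2}. For existence, I would minimize
\[
J(v) = \lambda\int_\Omega \frac{|\nabla v|^{p(x)}}{p(x)}\,dx + \frac{1}{2q}\int_\Omega (v^+)^{2q}\,dx - \frac{1}{q}\int_\Omega h_0(v^+)^q\,dx - \lambda\int_\Omega \int_0^{v^+(x)}\!\! f(x,\tau)\,d\tau\,dx
\]
on $\mathbf{W}$. The growth estimate \eqref{infty} coming from $(f_0)$ together with Young's inequality absorbs the two negative terms into the two positive ones (exploiting $q < 2q$ and the fact that the Dirichlet energy controls the $L^{p_-}$ norm through Poincar\'e and the continuous embedding $L^{p(x)} \hookrightarrow L^{p_-}$), so $J$ is coercive; convexity of the first two integrals provides weak lower semicontinuity, while the compact embeddings $W_0^{1,p(x)}(\Omega) \hookrightarrow L^r(\Omega)$ for subcritical $r$ make the remaining terms weakly continuous. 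A minimizer $v$ exists, and the standard argument $J(v^+) \leq J(v)$ forces $v \geq 0$, so the Euler-Lagrange equation is exactly \eqref{64}.

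To upgrade the regularity I would run a Moser/Stampacchia iteration with test functions $(v-k)^+$, which gives $v \in L^\infty(\Omega)$ because the right-hand side of \eqref{E_1} has subcritical growth by $(f_0)$; Theorem 1.2 in \cite{*2} then yields $v \in C^{1,\alpha}(\overline\Omega)$. If in addition $(f_2)$ and $(f_3)$ hold, then $(f_2)$ supplies hypothesis $\mathbf{(c2)}$ of Lemma \ref{SMPl} (applied with $\alpha(x) \equiv 2q$, $\beta(x) \equiv q$, $l \equiv 1$, $h = h_0$, and $k = \lambda f$), producing $v > 0$ in $\Omega$. Lemma \ref{HMP} then gives $\partial v/\partial \vec n < 0$ on $\partial\Omega$, which together with $v \in C^1(\overline\Omega)$ delivers the two-sided bound $c_1\dist(\cdot,\partial\Omega) \leq v \leq c_2\dist(\cdot,\partial\Omega)$, i.e. $v \in C_d^0(\overline\Omega)^+$.

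The main obstacle is the contraction estimate \eqref{Acr}. Given two solutions $v_1,v_2 \in C_d^0(\overline\Omega)^+$ for data $h_1,h_2$, the two-sided distance bounds force $v_1/v_2, v_2/v_1 \in L^\infty(\Omega)$, so the test functions
\[
\phi_1 = \frac{(v_1^q - v_2^q)^+}{v_1^{q-1}}, \qquad \phi_2 = -\frac{(v_1^q - v_2^q)^+}{v_2^{q-1}}
\]
belong to $W_0^{1,p(x)}(\Omega) \cap L^\infty(\Omega)$ and are admissible in \eqref{64}. Plugging $\phi_1$ into the equation for $v_1$, $\phi_2$ into the one for $v_2$, and summing, the zero-order contribution equals $\|(v_1^q - v_2^q)^+\|_{L^2}^2$; the divergence contribution is the Diaz-Saa expression of Theorem \ref{Diaz-Saa} with $r = q$ restricted to $\{v_1 > v_2\}$ and is nonnegative by integrating the pointwise Picone inequality of Theorem \ref{picone} on that set; and $(f_3)$ makes the $f$-contribution nonpositive on the right-hand side, since on $\{v_1>v_2\}$ the map $s \mapsto f(x,s)/s^{q-1}$ is nonincreasing. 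Cauchy-Schwarz applied to the $h$-term yields
\[
\|(v_1^q - v_2^q)^+\|_{L^2}^2 \leq \|(h_1 - h_2)^+\|_{L^2}\,\|(v_1^q - v_2^q)^+\|_{L^2},
\]
from which \eqref{Acr} follows after division. The two subtle checks are the admissibility of $\phi_1,\phi_2$ (secured by the $C_d^0$ estimates, which prevent blow-up of $1/v_i^{q-1}$ near $\partial\Omega$) and the fact that the Diaz-Saa argument localizes to $\{v_1 > v_2\}$; both rely crucially on Theorem \ref{picone} being a pointwise inequality rather than only an integrated one.
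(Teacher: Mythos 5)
Your route coincides with the paper's own proof in every structural respect: you minimize the same truncated functional on $\mathbf{W}$, obtain $L^\infty$ by a De Giorgi/Stampacchia-type iteration and then $C^{1,\alpha}(\overline\Omega)$ from Fan's theorem (Theorem \ref{C^1}), get positivity and the normal-derivative sign from Lemma \ref{SMPl} (condition \textbf{(c2)} via $(f_2)$) and Lemma \ref{HMP}, and prove \eqref{Acr} by testing the two equations with $(v_1^q-v_2^q)^+/v_1^{q-1}$ and $-(v_1^q-v_2^q)^+/v_2^{q-1}$, using Lemma \ref{implem} with $r=q$, $(f_3)$ and Cauchy--Schwarz; this last step is exactly the paper's Step 2, including the admissibility remark based on $v_1,v_2\in C_d^0(\overline\Omega)^+$.

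There is, however, one genuine gap: you never rule out the trivial minimizer. Since $f(x,0)=0$, the function $v\equiv 0$ satisfies \eqref{64} and has zero energy, so nothing in your argument prevents the global minimizer from being identically zero. Lemma \ref{SMPl} is stated for nonnegative \emph{nontrivial} solutions, so your step ``$(f_2)$ supplies \textbf{(c2)}, producing $v>0$'' collapses unless you first prove $v\not\equiv 0$; without it the ``in addition'' part of the theorem ($v\in C_d^0(\overline\Omega)^+$) is not reached. The paper handles this explicitly in its Step 1 by exhibiting $\tilde v=t\phi$ with $\phi\in C_c^1(\Omega)$ nonnegative, $\phi\not\equiv 0$, and $t>0$ small, such that $\mathcal{J}(t\phi)<0=\mathcal{J}(0)$ (exploiting the $h_0$-term and the behaviour of $f$ near $0$), which forces the minimizer to be nontrivial; you need to add such an argument. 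A minor bookkeeping point in the same step: to invoke Lemma \ref{SMPl} you should first divide \eqref{E_1} by $\lambda$ so that the principal part has coefficient one, i.e. take $l\equiv 1/\lambda$, $h=h_0/\lambda$, $\alpha\equiv 2q$, $\beta\equiv q$, $k=f$, in which case \textbf{(c2)} reads $\liminf_{t\to 0^+}f(x,t)/t^{2q-1}>1/\lambda$ and is guaranteed by $(f_2)$; your identification $l\equiv 1$, $k=\lambda f$ corresponds to an equation with $\lambda$ still multiplying the $p(x)$-Laplacian, which is not the normalization of the lemma.
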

\begin{remark}
\eqref{Acr} implies the uniqueness of the weak solution to \eqref{E_1} in $C_d^0(\overline \Omega)^+$.
\end{remark}
\begin{proof}
We perform the proof into several steps.\\
{\bf Step 1:} Existence of a weak solution\\
Consider the energy functional $\mathcal{J}$ defined on ${\mathbf{W}} $ equipped with $\|.\|_{\mathbf{W}}=\|.\|_{W_0^{1,p(x)}} + \|.\|_{L^{2q}}$
 \begin{equation}\label{defJ}
\mathcal{J}(v) = \dfrac{1}{2q}\int_{\Omega}v^{2q} ~dx + \lambda \int_{\Omega} \dfrac{|\nabla v |^{p(x)}}{p(x)} ~dx - \dfrac{1}{q} \int_{\Omega} h_0 D(v) ~dx - \lambda \int_{\Omega} F(x,v) ~dx 
\end{equation} where
\begin{equation*}
     D(t) =
     \left\{
     \begin{alignedat}{2}
         & t^q
          && \quad\mbox{ if }\, 0 \leq t < \infty  \,;
          \\ 
          & 0
          && \quad\mbox{ if }\, -\infty < t < 0 \,,
          \end{alignedat}
    \right.
\mbox{and}\ 
     F(x,t) =
     \left\{
     \begin{alignedat}{2}
         & \int_0^t f(x,s) ds
          && \quad\mbox{ if }\, 0 \leq t< \infty  \,;
          \\ 
          & 0
          && \quad\mbox{ if }\, -\infty < t < 0 \,.
          \end{alignedat}
    \right.
\end{equation*}
We also extend the domain of $f$ to all of ${\Omega}\times \mathbb{R}$ by setting $f(x,t)= \dfrac{\partial F}{\partial t} (x,t) =0$ for $(x,t) \in  {\Omega}\times (-\infty, 0).$ From \eqref{infty}, H\"older inequality \eqref{hi} and since $W_0^{1,p(x)} \hookrightarrow L^{p_-}(\Omega)$, we obtain
\begin{align*}
      \mathcal{J}(v)&\geq \dfrac{1}{2q} \|v\|^{2q}_{L^{2q}} + \lambda \|v\|^{p_-}_{W_0^{1,p(x)}} - \dfrac{1}{q} \|h_0\|_{L^2} \|v\|_{L^{2q}}^{q} - \lambda C_{\epsilon} \int_{\Omega} v dx - \lambda \dfrac{\epsilon}{p_-} \int_{\Omega} v^{p_-} ~dx\\
      &\geq \dfrac{1}{q} \|v\|^{q}_{L^{2q}} \bigg(\dfrac{1}{2} \|v\|^{q}_{L^{2q}} - \|h_0\|_{L^2}\bigg) + \lambda \|v\|_{W_0^{1,p(x)}}((1- \epsilon) \|v\|^{p_- -1}_{W_0^{1,p(x)}} - \tilde{C}) .
  \end{align*}
  Then by choosing $\epsilon$ small enough we conclude the coercivity of $\mathcal J$ on ${\bf W}$ and  $\mathcal{J}$ is also continuous on $\mathbf{W}$ therefore we deduce the existence of a global minimizer $v_0$ to $\mathcal{J}$.\\
Furthermore we note
$$    \mathcal{J}(v_0) \geq \mathcal{J}(v_0^+) + \dfrac{1}{2q}\int_{\Omega}(v_0^-)^{2q} ~dx +  \lambda \int_{\Omega} \dfrac{|\nabla v_0^-|^{p(x)}}{p(x)} ~dx$$
  which implies $v_0\geq 0$. \\
Now we claim that $v_0 \not\equiv 0$ in $\Omega.$ Since $\mathcal{J}(0)=0,$ it is sufficient to prove the existence of $\tilde v\in \mathbf{W}$ such that $\mathcal{J}(\tilde v) < 0.$ For that take $\tilde v=t\phi$ where $\phi \in C_c^1(\Omega)$ is nonnegative function such that $ \phi \not\equiv 0$ and $t>0$ small enough.

 Since $v_0$ is a global minimizer for the differentiable functional $\mathcal{J}$, we have that $v_0$ satisfies \eqref{64}
{\it i.e.} $v_0$ is a weak solution to \eqref{E_1}. From Corollary \ref{reg3} we infer that $v_0 \in L^{\infty}(\Omega)$. Then by using Theorem \ref{C^1}, we obtain, $v_0 \in C^{1, \alpha}(\overline{\Omega})$ for some $\alpha \in (0,1)$.\\
From $(f_2)$ and Lemma \ref{SMPl} (with condition {\it\textbf{(c2)})}, we obtain $v_0 > 0$ and by Lemma \ref{HMP} we get $\dfrac{\partial v_0}{\partial  \vec n} < 0$ on $\partial \Omega$. Therefore, $v_0$ belongs to $C^0_d(\overline \Omega)^+$.\\
\textbf{Step 2:} Contraction property \eqref{Acr}\\
Let $v_1$ and $v_2$ two positive weak solutions of \eqref{E_1} such that $v_1, v_2 \in  C^0_d(\overline \Omega)^+$. For any $\phi,\ \Psi \in \mathbf{W}$:
$$ \int_{\Omega} v_1^{2q-1} \phi ~dx + \lambda \int_{\Omega} |\nabla v_1|^{p(x)-2} \nabla v_1. \nabla \phi ~dx = \int_{\Omega} h_1 v_1^{q-1} \phi ~dx + \lambda  \int_{\Omega} f(x,v_1) \phi ~dx$$
and
$$ \int_{\Omega} v_2^{2q-1} \Psi ~dx + \lambda \int_{\Omega} |\nabla v_2|^{p(x)-2} \nabla v_2. \nabla \Psi ~dx = \int_{\Omega} h_2 v_2^{q-1} \Psi ~dx + \lambda \int_{\Omega} f(x,v_2) \Psi ~dx.$$
Since $v_1, v_2\in C^0_d(\overline \Omega)^+$, $\phi= \left( v_1 - \frac{v_2^q}{v_1^{q-1}}\right)^+$ and $\Psi= \left( v_2 - \frac{v_1^q}{v_2^{q-1}}\right)^- $ are well-defined and belong to $ \mathbf{W}$. Subtracting the  two above expressions and using $(f_3)$ together with Lemma \ref{implem} we obtain
\begin{align*}
    \int_{\Omega} ((v_1^q- v_2^q)^+)^2 ~dx \leq \int_{\Omega} (h_1-h_2) (v_1^q - v_2^q)^+ ~dx.
\end{align*}
Finally, applying the  H\"older inequality we get \eqref{Acr}.
\end{proof}
\noindent From Theorem \ref{exis}, we deduce the accretivity of ${\mathcal R}$:
\begin{cor}\label{L^infty}
Let $\lambda>0$, $q \in (1,p_-], $ $f:\Omega\times \R^+ \to \R^+$ satisfying $(f_1)$-$(f_3)$ and $h_0 \in L^{\infty}(\Omega)^+$. Consider the following problem
\begin{equation}\label{E_12}
     \left\{
         \begin{alignedat}{2} 
             {} u +\lambda \mathcal R u
             & {}= h_0(x)  
             && \quad\mbox{ in }\,  \Omega;
             \\
u&>0 && \quad\mbox{ in }\,  \Omega;\\
             u & {}= 0
             && \quad\mbox{ on }\,  \partial\Omega .
          \end{alignedat}
     \right.
\end{equation}
Then there exists a unique distributional solution $u \in \mathcal{D}(\mathcal{R}) \cap C^1{(\overline\Omega)}$ of \eqref{E_12}\ {\it i.e.} $\forall \phi \in C_c^1(\Omega)$ 
\begin{align*}
\int_{\Omega} u_0 \phi ~dx+ \lambda \int_{\Omega} |\nabla u_0^{1/q}|^{p(x)-2} \nabla u_0^{1/q}&.\nabla\bigg(\dfrac{\phi}{u_0^{(q-1)/q}}\bigg)~dx\\
& = \int_{\Omega} h_0 \phi ~dx + \lambda \int_{\Omega} \dfrac{f(x, u_0^{1/q})}{u_0^{(q-1)/q}} \phi~dx.
\end{align*}
Moreover, if $u_1$ and $u_2$ are two distributional solutions of \eqref{E_12} in $\mathcal{D}(\mathcal{R}) \cap C^1{(\overline\Omega)}$  associated to $h_1$ and $h_2$ respectively, then the operator $\mathcal{R}$ satisfies
\begin{equation}\label{accreti}
    \|(u_1-u_2)^+\|_{L^2} \leq \|(u_1-u_2+\lambda (\mathcal{R}u_1- \mathcal{R}u_2))^+\|_{L^{2}}.
\end{equation}
\end{cor}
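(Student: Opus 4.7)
My plan is to derive Corollary~\ref{L^infty} entirely from Theorem~\ref{exis} via the bijective substitution $u=v^q$, which turns \eqref{E_12} into \eqref{E_1}. Indeed, if $v$ solves \eqref{E_1} weakly, dividing the equation $v^{2q-1}-\lambda\Delta_{p(x)}v=h_0 v^{q-1}+\lambda f(x,v)$ pointwise by $v^{q-1}$ formally yields $v^q+\lambda(-\Delta_{p(x)}v-f(x,v))/v^{q-1}=h_0$, which is exactly $u+\lambda\mathcal R u=h_0$ with $u=v^q$.

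First I would apply Theorem~\ref{exis} to produce $v\in C^1(\overline\Omega)\cap C^0_d(\overline\Omega)^+$ solving \eqref{E_1} and set $u\eqdef v^q$. Since $q>1$ and $v\in C^1(\overline\Omega)$, we have $u\in C^1(\overline\Omega)$; moreover $u^{1/q}=v\in W^{1,p(x)}_0(\Omega)$, $u\in L^\infty(\Omega)\subset L^2(\Omega)$, and the equation gives $\mathcal R u=\lambda^{-1}(h_0-u)\in L^\infty(\Omega)\subset L^2(\Omega)$, so $u\in \mathcal D(\mathcal R)\cap C^1(\overline\Omega)$. To verify the distributional identity, I would fix $\phi\in C_c^1(\Omega)$; since $v>0$ is continuous, it is bounded below by a positive constant on $\operatorname{supp}\phi$, so $\psi\eqdef\phi/v^{q-1}\in W^{1,p(x)}_0(\Omega)\cap L^\infty(\Omega)\subset\mathbf W$. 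Substituting $\psi$ into \eqref{64} transforms the four terms respectively into $\int u\phi$, $\lambda\int |\nabla v|^{p(x)-2}\nabla v\cdot\nabla(\phi/v^{q-1})$, $\int h_0\phi$ and $\lambda\int f(x,v)v^{1-q}\phi$, which, after the identifications $v=u^{1/q}$ and $v^{q-1}=u^{(q-1)/q}$, is exactly the distributional equation in the statement.

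For the contraction estimate \eqref{accreti}, given two distributional solutions $u_1,u_2\in\mathcal D(\mathcal R)\cap C^1(\overline\Omega)$, I would set $h_i\eqdef u_i+\lambda\mathcal R u_i\in L^2(\Omega)$ and $v_i\eqdef u_i^{1/q}\in C^0_d(\overline\Omega)^+$. Reversing the substitution of the previous paragraph (multiplying the distributional equation by $v_i^{q-1}$) shows that each $v_i$ is a weak solution of \eqref{E_1} with data $h_i$, so applying the contraction \eqref{Acr} from Theorem~\ref{exis} gives $\|(u_1-u_2)^+\|_{L^2}=\|(v_1^q-v_2^q)^+\|_{L^2}\leq \|(h_1-h_2)^+\|_{L^2}$, which is \eqref{accreti}. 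Uniqueness is then immediate: taking $h_1=h_2=h_0$ forces $u_1\leq u_2$, and a symmetric argument gives $u_1=u_2$. The main technical point I expect to have to address is the use of \eqref{Acr} for data $h_i$ only in $L^2$ rather than $L^\infty$; inspecting Step~2 of the proof of Theorem~\ref{exis} shows that the contraction is derived purely via H\"older's inequality from the integral identity obtained by testing with $\phi=(v_1-v_2^q/v_1^{q-1})^+$ and $\Psi=(v_2-v_1^q/v_2^{q-1})^-$, so it extends to $L^2$ data as soon as the solutions lie in $C^0_d(\overline\Omega)^+$ (alternatively, one could truncate $h_i$ in $L^\infty$ and pass to the limit).
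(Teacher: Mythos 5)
Your argument is correct and, like the paper's, everything is driven by Theorem \ref{exis} together with the change of unknown $u=v^q$; the difference is the mechanism used to obtain the distributional identity. The paper works variationally: it sets $\mathcal{E}(u)=\mathcal{J}(u^{1/q})$ on $\dot V^q_+\cap L^2(\Omega)$, uses that $u_0=v_0^q$ is a global minimizer, compares $\mathcal{E}(u_0+t\phi)$ with $\mathcal{E}(u_0)$ for $\phi\in C^1_c(\Omega)$ and $|t|$ small (positivity of $u_0+t\phi$ on $\mathrm{supp}\,\phi$), divides by $t$ and passes to the limit. You instead plug the test function $\psi=\phi/v_0^{q-1}$ directly into the weak formulation \eqref{64}; since $v_0\in C^1(\overline\Omega)$ is bounded below by a positive constant on $\mathrm{supp}\,\phi$, indeed $\psi\in C^1_c(\Omega)\subset \mathbf W$, and the four terms transform exactly as you say. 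Your route is more elementary (no differentiation of the composed functional, no limit in $t$) and buys the identity in one line; the paper's route is the one that generalizes later to the $L^2$-data setting where it reuses the same minimization structure. The verification that $u_0\in\mathcal D(\mathcal R)\cap C^1(\overline\Omega)$ and the derivation of uniqueness from \eqref{accreti} with $h_1=h_2$ are the same in both.

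For the contraction \eqref{accreti} the paper is laconic ("\eqref{accreti} and uniqueness follow from \eqref{Acr}") and you are more explicit, but there is one assertion you should not leave as stated: for \emph{arbitrary} distributional solutions $u_i\in\mathcal D(\mathcal R)\cap C^1(\overline\Omega)$, the claim $v_i\eqdef u_i^{1/q}\in C^0_d(\overline\Omega)^+$ is not automatic --- $u_i\in C^1(\overline\Omega)$ with $u_i>0$ in $\Omega$ only gives $u_i\lesssim \dist(x,\partial\Omega)$, not the two-sided bound $c_1\dist^q\leq u_i\leq c_2\dist^q$ that $u_i^{1/q}\in C^0_d(\overline\Omega)^+$ requires. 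This is needed both to invoke \eqref{Acr} and to make your test functions in the Picone step admissible. The gap is fillable with the paper's own tools: reversing the substitution as you do shows $v_i\in W^{1,p(x)}_0(\Omega)\cap L^\infty(\Omega)$ solves \eqref{E_1} with data $h_i$ against compactly supported test functions, hence (by density, the right-hand side being bounded) weakly; then Theorem \ref{C^1} gives $v_i\in C^{1,\alpha}(\overline\Omega)$, and Lemma \ref{SMPl} (condition \textbf{(c2)}, via $(f_2)$) with Lemma \ref{HMP} give $v_i\in C^0_d(\overline\Omega)^+$. The paper leaves exactly this point implicit as well, so this is a patch rather than a flaw in your approach; your final remark that the contraction step of Theorem \ref{exis} only needs $h_1-h_2\in L^2$ is correct and is what the paper records in Remark \ref{rema}.
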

\begin{proof}
Define the energy functional $\mathcal{E}$ on $\dot V^{q}_+\cap L^2(\Omega)$ as $\mathcal{E}(u) =\mathcal{J}(u^{1/q})$ where $\mathcal{J}$ is defined in \eqref{defJ}.\\
Let $\phi \in C_c^{1}(\Omega)$ and $v_0$ is the global minimizer of \eqref{defJ} which is also the weak solution of \eqref{E_1} and $u_0 = v_0^q$ then there exists $ t_0=t_0(\phi) > 0$ such that for $t \in (-t_0, t_0)$, $u_0 +t\phi >0$.
Hence we have\begin{align*}
    0 \leq \mathcal{E}(u_0 +t \phi) - \mathcal{E}(u_0) &= \dfrac{1}{2q} \bigg(\int_{\Omega}(t \phi)^2~dx + \int_{\Omega} 2 t u_0 \phi~dx \bigg) - \frac{1}{q} \int_{\Omega} h t \phi~dx \\
    &+ \lambda  \bigg(\int_{\Omega} \dfrac{|\nabla (u_0+t \phi)^{1/q}|^{p(x)}}{p(x)}~dx- \int_{\Omega} \dfrac{|\nabla u_0^{1/q}|^{p(x)}}{p(x)}~dx \bigg)\\
    &- \lambda \bigg( \int_{\Omega} F(x,(u_0+t \phi)^{1/q}) ~dx - \int_{\Omega} F(x,u_0^{1/q})~dx \bigg).
\end{align*}
Then divide by $t$ and passing to the limits $t \to 0$ we obtain 
$u_0 = v_0^q$ is the distributional solution of ($\ref{E_12}$).  Finally \eqref{accreti} and uniqueness follow from \eqref{Acr}. 
\end{proof}
\noindent We now generalize some above results for a larger class of potentials $h_0$:
\subsection{Further results for \eqref{E_1} and uniqueness }
\begin{thm}\label{L^2}
Let $\lambda>0$,  $f:\Omega\times \R^+ \to \R^+$ satisfying $(f_1)$-$(f_3)$ and $h_0 \in L^{2}(\Omega)^+$ and $q \in (1,p_-]$. Then there exists a positive weak solution $v\in \mathbf{W}$ of \eqref{E_1} in the sense of \eqref{64}.  Moreover assuming that $h_0$ belongs to $L^\nu(\Omega)$ for some $\nu> \max\left\{1, \frac{N}{p_-}\right\}$, $v\in L^\infty(\Omega)$.
\end{thm}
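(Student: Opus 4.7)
\textbf{Proof proposal for Theorem~\ref{L^2}.}

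The plan is an approximation scheme. For each $n\in\N$ set $h_n\eqdef\min\{h_0,n\}\in L^\infty(\Omega)^+$ and apply Theorem~\ref{exis} to produce a positive weak solution $v_n\in C^0_d(\overline\Omega)^+\cap \mathbf{W}$ of \eqref{E_1} with datum $h_n$. Since $h_n\to h_0$ in $L^2(\Omega)$, the contraction inequality \eqref{Acr} yields $\|v_n^q-v_m^q\|_{L^2}\leq\|h_n-h_m\|_{L^2}\to 0$, so $(v_n^q)$ is Cauchy in $L^2(\Omega)$. Up to extraction of a subsequence, $v_n\to v$ almost everywhere for some measurable $v\geq 0$ with $v^q\in L^2(\Omega)$.

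A uniform bound on $(v_n)$ in $\mathbf{W}$ is obtained by testing \eqref{64} against $\phi=v_n$. The subhomogeneous bound $f(x,s)\leq C_\epsilon+\epsilon s^{p_--1}$ furnished by \eqref{infty}, the Cauchy--Schwarz estimate $\int h_n v_n^q\leq \|h_0\|_{L^2}\|v_n\|_{L^{2q}}^q$ together with a Young inequality absorbing $\|v_n\|_{L^{2q}}^q$ into $\|v_n\|_{L^{2q}}^{2q}$, and the embedding $W^{1,p(x)}_0(\Omega)\hookrightarrow L^{p_-}(\Omega)$ are combined to yield that $\|v_n\|_{\mathbf{W}}$ is bounded independently of $n$. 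Extracting a further subsequence we may assume $v_n\rightharpoonup v$ in $\mathbf{W}$, $v_n\to v$ in $L^{p_-}(\Omega)$, and, by dominated convergence based again on \eqref{infty}, $f(x,v_n)\to f(x,v)$ in $L^{(p_-)'}(\Omega)$.

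The main obstacle is the passage to the limit in the quasilinear term $\int |\nabla v_n|^{p(x)-2}\nabla v_n\cdot \nabla\phi\,dx$. I would handle it by a Minty--Browder argument: testing the equation for $v_n$ with $v_n-v$ and using the strong convergences just established in the lower-order terms shows that $\limsup_n \int |\nabla v_n|^{p(x)-2}\nabla v_n\cdot \nabla(v_n-v)\,dx\leq 0$. The strict monotonicity of $-\Delta_{p(x)}$, combined with uniform convexity of $L^{p(x)}(\Omega)^N$, then forces $\nabla v_n\to\nabla v$ almost everywhere and strongly in $L^{p(x)}(\Omega)^N$ (see the standard arguments for \cite{*10}). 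Passing to the limit in \eqref{64} gives that $v$ is a weak solution of \eqref{E_1}; its positivity in $\Omega$ is a consequence of $(f_2)$ combined with Lemma~\ref{SMPl} applied to $v$ (once $v\in L^\infty_{\mathrm{loc}}$ is known locally, as in Step~1 of the proof of Theorem~\ref{exis}).

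For the $L^\infty$ conclusion under the additional assumption $h_0\in L^\nu(\Omega)$ with $\nu>\max\{1,N/p_-\}$, I would run a Stampacchia--De Giorgi iteration on the level sets $A_k\eqdef\{v>k\}$ using $\phi=(v-k)^+\in\mathbf{W}$ as test function. After dropping the nonnegative term $\int v^{2q-1}(v-k)^+$, the remaining right-hand side $\int h_0 v^{q-1}(v-k)^+ +\lambda\int f(x,v)(v-k)^+$ is controlled, via $f(x,v)\leq C_1+C_2 v^{q-1}$, H\"older's inequality with exponent $\nu$, and the Sobolev embedding for $W^{1,p_-}_0$, by a quantity of the form $C|A_k|^{1+\delta}\,\|\nabla(v-k)^+\|_{L^{p_-}}$ with $\delta>0$; the hypothesis $\nu>N/p_-$ is precisely what produces the extra exponent $\delta$. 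The classical Stampacchia lemma then yields $\|v\|_{L^\infty}<\infty$, and the fact that the same argument can be carried out uniformly in $n$ ensures the bound survives the limit $n\to\infty$.
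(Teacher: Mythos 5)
Your overall scheme is the paper's own: truncate $h_n=\min\{h_0,n\}$, solve with Theorem \ref{exis}, use the contraction \eqref{Acr} to get convergence, obtain the energy bound by testing with $v_n$, recover strong gradient convergence from $\int|\nabla v_n|^{p(x)-2}\nabla v_n\cdot\nabla(v_n-v)\,dx\to 0$ together with monotonicity (this is exactly the role of Lemma A.2 and Remark A.3 of \cite{GTW} in the paper), and prove boundedness by a De Giorgi--Stampacchia iteration (the content of Proposition \ref{rg1}, Lemma \ref{fsb1} and Corollary \ref{reg3}). There is, however, one genuine gap: the positivity of the limit $v$. You propose to apply Lemma \ref{SMPl} to $v$, but that lemma is stated for solutions $u\in C^1(\overline\Omega)$ of \eqref{61} with \emph{bounded} coefficients, and neither hypothesis is available here: with $h_0$ merely in $L^2(\Omega)$ the limit $v$ is only in $\mathbf{W}$ (it need not even be bounded unless the extra $L^\nu$ hypothesis holds), and the term $h_0v^{q-1}$ does not satisfy the structural assumption $\mathbf{(B)}$ needed for the $C^{1,\alpha}$ regularity (Theorem \ref{C^1}) that Lemma \ref{SMPl} presupposes. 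The paper avoids this entirely: since $h_n=\min\{h_0,n\}$ is nondecreasing, \eqref{Acr} gives $\|(v_n^q-v_{n+1}^q)^+\|_{L^2}\leq\|(h_n-h_{n+1})^+\|_{L^2}=0$, so $(v_n)$ is nondecreasing and $v\geq v_1>0$ in $\Omega$, with $v_1\in C^0_d(\overline\Omega)^+$ furnished by Theorem \ref{exis}. Since you already chose the monotone truncation, this is the natural repair of your argument.

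Two smaller cautions on the $L^\infty$ part. First, the quantity you claim to obtain, $C|A_k|^{1+\delta}\|\nabla(v-k)^+\|_{L^{p_-}}$, is not what the estimate actually produces: after H\"older with $\nu$ and Sobolev for $W^{1,p_-}_0$ you get measure factors with exponents of the form $\frac{1}{\nu'}-\frac{1}{(p_-)^*}$ (and $k$-dependent factors such as $k^{q-1}$), and the superlinear exponent in $|A_k|$ only emerges after the final recursion; this is why the paper's Lemma \ref{fsb1} is formulated to tolerate terms like $k^\alpha|A_{k,R}|$ and $k^\beta|A_{k,R}|^{\frac{p}{p^*}+\varepsilon}$. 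Second, passing from the modular $\int_{A_k}|\nabla(v-k)^+|^{p(x)}\,dx$ on the left to the $W^{1,p_-}$ norm costs an extra $|A_k|$ term which must also be fed into the iteration; the paper handles the mismatch between $p(x)$ and the local bounds $p^\pm$ by working on small balls where $p^+<(p^-)^*$. Your global iteration can be made to close for this Dirichlet problem (no cut-off functions are needed), but these bookkeeping points must be carried out explicitly, as in Proposition \ref{rg1}.
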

\begin{proof}
Let $h_n\in C^1_c(\Omega)$ such that $h_n \geq 0$ and $h_n \to h $ in $L^2(\Omega)$. Define $(v_n)\subset C^{1, \alpha}(\overline{\Omega})\cap C^0_d(\overline \Omega)^+$ as for a fixed $n$, $v_n$ is the unique positive weak solution of \eqref{E_1} with $h_0=h_n$ {\it i.e.} $v_n$ satisfies: for $\phi \in \mathbf{W}$
\begin{equation}\label{ws}
\int_{\Omega} v_n^{2q-1} \phi ~dx + \lambda \int_{\Omega} |\nabla v_n|^{p(x)-2} \nabla v_n .\nabla \phi ~dx= \int_{\Omega} h_n v_n^{q-1} \phi ~dx + \lambda \int_{\Omega} f(x,v_n) \phi ~dx .
\end{equation}
Since $(a-b)^{2q}\leq (a^q-b^q)^2$ for any $q\geq 1$, \eqref{Acr} implies for any $n,\ p\in \N^*$
\begin{equation*}
\|(v_n-v_p)^+\|_{L^{2q}} \leq \|(h_n-h_p)^+\|^q_{L^{2}}
\end{equation*}
thus we deduce that $(v_n)$ converges to $v\in L^{2q}(\Omega)$.\\
We infer that the limit $v$ does not depend on the choice of the sequence $(h_n)$. Indeed, consider $\tilde h_n\neq h_n$ such that $\tilde h_n\to h_0$ in $L^2(\Omega)$ and $\tilde v_n$ the positive solution of \eqref{E_1} corresponding to $\tilde h_n$ which converges to $\tilde v$.\\
Then, for any $n\in \N$, \eqref{Acr} implies 
\begin{equation*}
\|(v_n^q-\tilde v_n^q)^+\|_{L^2} \leq \|(h_n-\tilde h_n)^+\|_{L^{2}}
\end{equation*}
and passing to the limit we get $\tilde v \geq v$ and then by reversing the role of $v$ and $\tilde{v}$ we obtain $v= \tilde{v}$.\\
So define, for any $n\in \mathbb{N}^*$, $h_n=\min\{h,n\}$. Thus $(v_n)$ is nondecreasing and for any $n\in \mathbb{N}^*$, $v_n\leq v$ {\it a.e.} in $\Omega$ which implies $v\geq v_1>0$ in $\Omega$.\\
We choose $\phi= v_n$ in \eqref{ws}. Applying the H\"older inequality and \eqref{infty}, we obtain 
\begin{equation*}
\begin{split}
\lambda \int_{\Omega} |\nabla v_n|^{p(x)}~dx&\leq  \|h_n\|_{L^2} \|v_n\|^q_{L^{2q}} + \lambda C_\epsilon\|v_n\|_{L^1} + \lambda \epsilon\|v_n\|_{L^{p_-}}^{p_-}\\
&\leq C + \lambda \epsilon\|v_n\|_{L^{p_-}}^{p_-}.
\end{split}
\end{equation*}
Assume $\|\nabla v_n\|_{L^{p(x)}} \geq 1$. Since $W_0^{1,p(x)}(\Omega) \hookrightarrow L^{p_-}(\Omega)$ and by \eqref{a} we deduce for some positive constant $C>0$:
$$ \lambda \int_{\Omega} |\nabla v_n|^{p(x)}~dx \leq C+ \lambda \epsilon C\int_{\Omega} |\nabla v_n|^{p(x)}~dx. $$
Choosing $\epsilon$ small enough and gathering with the case $\|\nabla v_n\|_{L^{p(x)}} \leq 1$, we conclude $(v_n)$ is uniformly bounded in ${W_0^{1,p(x)}(\Omega)}$ and $L^{p_-}(\Omega)$. Hence $v_n$ converges weakly to $v$ in $W^{1,p(x)}_0(\Omega)$ and by monotonicity of $(v_n)$ strongly in $L^{p_-}(\Omega)$ and in $L^{2q}(\Omega)$.
Taking now $\phi=v_n-v$ in \eqref{ws}, from \eqref{infty} with $\epsilon=1$ and by H\"older inequality
\begin{equation*}
\left|\int_\Omega f(x,v_n)(v_n-v)~dx\right|\leq C\|v_n-v\|_{L^{2q}}+\|v_n\|^{p_--1}_{L^{p_-}}\|v_n-v\|_{L^{p_-}} \to 0
\end{equation*}
and  
\begin{equation*}
\int_{\Omega} h_n v_n^{q-1} (v_n-v)~dx\to 0 \ \ \text{and}\ \ \int_{\Omega} v_n^{2q-1} (v_n-v)~dx \to 0. 
\end{equation*}
Finally \eqref{ws} becomes
$$\int_{\Omega} |\nabla v_n|^{p(x)-2} \nabla v_n .\nabla (v_n - v)~dx \to 0.$$
Then, since $v_n\rightharpoonup v$ in $W^{1,p(x)}_0(\Omega)$
$$\int_{\Omega} ( |\nabla v_n|^{p(x)-2} \nabla v_n -|\nabla v|^{p(x)-2} \nabla v).\nabla (v_n - v)~dx \to 0.$$
Lemma A.2 and Remark A.3 of \cite{GTW} give the strong convergence of $v_n$ to $v$ in $W_0^{1,p(x)}(\Omega)$.\\
Since $(v_n^{2q-1})$ and $(h_n v_n^{q-1})$ are uniformly bounded in $L^{2q/(2q-1)}(\Omega)$ and by \eqref{infty}, $f(x,v_n)$ is uniformly bounded in $L^{2q/q-1}(\Omega)$ and $f(x,v_n) \to f(x,v)$ a.e.\ in $\Omega.$ Then by Lebesgue dominated convergence theorem we have (up to a subsequence), for $\phi \in \mathbf{W}$
$$\int_{\Omega} v_n^{2q-1} \phi ~dx \to \int_{\Omega} v^{2q-1} \phi ~dx, \int_{\Omega} h_n v_n^{q-1} \phi ~dx \to \int_{\Omega} h v^{q-1} \phi ~dx$$
and
$$\int_{\Omega} f(x,v_n) \phi ~dx\to \int_{\Omega} f(x,v) \phi ~dx.$$
Finally we pass to the limit in \eqref{ws} and we obtain $v$ is a weak solution of \eqref{E_1}. 
To conclude corollary \ref{reg3} implies $v\in L^\infty(\Omega)$.
\end{proof}
\begin{remark}\label{rema}
Let $v_1,\ v_2$ are the weak solutions of \eqref{E_1} obtained by Theorem \ref{L^2} corresponding to $h_1,\ h_2\in L^2(\Omega)^+$, $h_1\not\equiv h_2$ respectively. Then
\begin{equation*}
\|(v_1^q-v_2^q)^+\|_{L^2} \leq \|(h_1-h_2)^+\|_{L^{2}}.
\end{equation*}
\end{remark}
\begin{remark}\label{othsol}
As in \textbf{Step 1} of the proof of Theorem \ref{exis}, we can alternatively prove the existence of a  weak solution by global minimization method.
\end{remark}
\noindent Under the hypothesis of Theorem \ref{L^2} and with the help of Picone identity, the following theorem gives the uniqueness of the solution to \eqref{E_1}.
\begin{thm}\label{uniq3}
Let $v, \tilde{v} $ be respectively a subsolution and supersolution to \eqref{E_1} for $h \in L^{p_0}(\Omega),\ p_0 \geq 2$, $h \geq 0$ and $f$ satisfies $(f_1)$ and $(f_3)$. Then $v\leq\tilde{v}.$
\end{thm}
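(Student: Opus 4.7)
The plan is to combine the Picone identity of Lemma \ref{implem} (applied with $r=q\in(1,p_-]$) with the monotonicity given by $(f_3)$, in the spirit of a Diaz--Saa comparison argument. Setting $\Omega^+=\{x\in\Omega:v(x)>\tilde v(x)\}$, I would test the subsolution inequality for $v$ against $\phi_1=(v^q-\tilde v^q)^+/v^{q-1}$ and the supersolution inequality for $\tilde v$ against $\phi_2=(v^q-\tilde v^q)^+/\tilde v^{q-1}$, both of which are nonnegative and supported in $\Omega^+$. Since these quotients are singular wherever $v$ or $\tilde v$ vanishes, one first regularises them as $\phi_{1,\epsilon}=(v^q-\tilde v^q)^+/(v+\epsilon)^{q-1}$ and $\phi_{2,\epsilon}=(v^q-\tilde v^q)^+/(\tilde v+\epsilon)^{q-1}$ and lets $\epsilon\to 0^+$ at the end.

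Subtracting the tested inequalities, the $h$ contribution is the same on both sides, namely $\int h(v^q-\tilde v^q)^+\,dx$, and therefore cancels exactly (this is the key reason for the specific choice of $\phi_1,\phi_2$); the zero-order terms collapse to $\int_{\Omega^+}(v^q-\tilde v^q)^2\,dx$ since $v^{2q-1}\phi_1-\tilde v^{2q-1}\phi_2=(v^q-\tilde v^q)^+(v^q-\tilde v^q)$ on $\Omega^+$. For the gradient part, using $\phi_1=v-\tilde v^q/v^{q-1}$ and $\phi_2=v^q/\tilde v^{q-1}-\tilde v$ on $\Omega^+$, the difference becomes
\begin{equation*}
\int_{\Omega^+}\Bigl[|\nabla v|^{p(x)}+|\nabla\tilde v|^{p(x)}-|\nabla v|^{p(x)-2}\nabla v\cdot\nabla\bigl(\tilde v^q/v^{q-1}\bigr)-|\nabla\tilde v|^{p(x)-2}\nabla\tilde v\cdot\nabla\bigl(v^q/\tilde v^{q-1}\bigr)\Bigr]dx,
\end{equation*}
which is nonnegative precisely by Lemma \ref{implem}. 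The $f$-contribution reduces to $\lambda\int_{\Omega^+}\bigl(f(x,v)/v^{q-1}-f(x,\tilde v)/\tilde v^{q-1}\bigr)(v^q-\tilde v^q)\,dx$, which is nonpositive on $\Omega^+$ by the nonincreasing monotonicity assumption $(f_3)$. Collecting, $\int_{\Omega^+}(v^q-\tilde v^q)^2\,dx\leq 0$, so $|\Omega^+|=0$ and $v\leq\tilde v$ almost everywhere.

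The main obstacle is admissibility of the test functions and the passage $\epsilon\to 0^+$. Membership of $\phi_{i,\epsilon}$ in the natural space $\mathbf{W}=W_0^{1,p(x)}(\Omega)\cap L^{2q}(\Omega)$ follows from $v,\tilde v\in\mathbf{W}\cap L^\infty(\Omega)$ (which is also required to invoke Lemma \ref{implem}) combined with a Stampacchia-type chain rule for Sobolev products and quotients, using that $(v+\epsilon)^{q-1}$ and $(\tilde v+\epsilon)^{q-1}$ are bounded away from zero. To pass to the limit I would keep the nonnegative Picone combination on the left-hand side and apply Fatou's lemma, while invoking dominated convergence on the $h$ and $f$ terms: the assumption $h\in L^{p_0}(\Omega)$ with $p_0\geq 2$ gives enough integrability to control $\int h(v^q-\tilde v^q)^+\,dx$ through H\"older, whereas the subhomogeneous bound \eqref{infty} together with the a priori Sobolev and Lebesgue bounds on $v,\tilde v$ dominates the $f$-contribution.
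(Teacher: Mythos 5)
Your overall strategy coincides with the paper's: test with Picone-type quotients, use Lemma \ref{implem} for the gradient part and $(f_3)$ for the $f$-part, and conclude $\int ((v^q-\tilde v^q)^+)^2\,dx\le 0$. The genuine gap lies in how you regularise. With $\phi_{1,\epsilon}=(v^q-\tilde v^q)^+/(v+\epsilon)^{q-1}$ and $\phi_{2,\epsilon}=(v^q-\tilde v^q)^+/(\tilde v+\epsilon)^{q-1}$ the numerator and denominator are no longer powers of one and the same positive function, so Lemma \ref{implem} does not apply at any fixed $\epsilon>0$: the gradient combination you get is not the Picone combination and has no pointwise sign. Hence your plan to ``keep the nonnegative Picone combination and apply Fatou'' has nothing nonnegative to keep, and Fatou cannot be applied to the $\epsilon$-level gradient integrand either, because it contains the cross term $-q\,v^{q-1}(\tilde v+\epsilon)^{1-q}\,|\nabla\tilde v|^{p(x)-2}\nabla\tilde v\cdot\nabla v$, whose coefficient $v^{q-1}/(\tilde v+\epsilon)^{q-1}$ is unbounded on $\{v>\tilde v\}$ as $\epsilon\to 0$, so there is no uniform integrable lower bound. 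Similarly, the claimed ``exact cancellation'' of the $h$-terms only occurs formally at $\epsilon=0$; at fixed $\epsilon$ their difference equals $h(v^q-\tilde v^q)^+\bigl[v^{q-1}/(v+\epsilon)^{q-1}-\tilde v^{q-1}/(\tilde v+\epsilon)^{q-1}\bigr]\ge 0$ and must be shown to vanish in the limit. Finally, dominated convergence is not the right tool for the singular part $f(x,\tilde v)(v^q-\tilde v^q)^+/(\tilde v+\epsilon)^{q-1}$ of the $f$-contribution, since its pointwise limit $f(x,\tilde v)\tilde v^{1-q}(v^q-\tilde v^q)^+$ is not known to be integrable a priori; it has to be handled by Fatou, exploiting its favourable sign.

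The paper's fix, which repairs all three points at once, is to shift the functions themselves rather than only the denominators: it tests with $\phi=\bigl(\frac{(v+\epsilon)^q-(\tilde v+\epsilon)^q}{(v+\epsilon)^{q-1}}\bigr)^+$ and $\Psi=\bigl(\frac{(\tilde v+\epsilon)^q-(v+\epsilon)^q}{(\tilde v+\epsilon)^{q-1}}\bigr)^-$. Since $\nabla(v+\epsilon)=\nabla v$, the gradient part is then exactly the Picone combination for the pair $(v+\epsilon,\tilde v+\epsilon)$, hence pointwise nonnegative by Lemma \ref{implem} at every $\epsilon>0$, and it can simply be discarded without any limit passage; only the zero-order, $h$- and $f$-terms need limits, handled by dominated convergence (with dominations of the type $v^q(v+1)^q$ and $h(v+1)^q$, which require only $v,\tilde v\in L^{2q}$ and $h\in L^2$, not $L^\infty$ bounds as you assume) and by Fatou combined with $(f_3)$. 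If you replace your test functions by these, the rest of your outline goes through essentially as you describe.
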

\begin{proof}
We have for any nonnegative $\phi, \, \Psi \in \mathbf{W}$
$$ \int_{\Omega} v^{2q-1} \phi ~dx + \lambda \int_{\Omega} |\nabla v|^{p(x)-2} \nabla v.\nabla \phi ~dx \leq \int_{\Omega} h v^{q-1} \phi ~dx +\lambda \int_{\Omega} f(x,v) \phi ~dx$$
and
$$ \int_{\Omega} \tilde{v}^{2q-1} \Psi ~dx + \lambda \int_{\Omega} |\nabla \tilde{v}|^{p(x)-2} \nabla \tilde{v}.\nabla \Psi ~dx\geq\int_{\Omega} h \tilde{v}^{q-1} \Psi ~dx + \lambda \int_{\Omega} f(x,\tilde{v}) \Psi ~dx.$$
Subtracting the above inequalities with test functions $\phi = \bigg(\frac{(v+\epsilon)^q-(\tilde{v}+\epsilon)^q}{(v+\epsilon)^{q-1}}\bigg)^+$ and \\ $\Psi = \bigg(\frac{(\tilde{v}+\epsilon)^q- (v+\epsilon)^q }{(\tilde{v}+\epsilon)^{q-1}}\bigg)^- \in  \mathbf{W}$ for $\epsilon \in (0,1)$, we obtain
\begin{equation}\label{uni}
      \begin{aligned}
         \int_{\{v > \tilde{v}\}} &\bigg( \frac{v^{2q-1}}{(v+\epsilon)^{q-1}} - \frac{\tilde{v}^{2q-1}}{(\tilde{v}+ \epsilon)^{q-1}} \bigg) ((v+\epsilon)^q- (\tilde{v}+ \epsilon)^q) ~dx\\
         &+ \lambda \int_{\{v > \tilde{v}\}} |\nabla (v+\epsilon)|^{p(x)-2} \nabla (v+\epsilon). \nabla\bigg(\frac{(v+\epsilon)^q - (\tilde{v}+ \epsilon)^q}{(v+ \epsilon)^{q-1}}\bigg)~dx\\
          &+ \lambda \int_{\{v > \tilde{v}\}} |\nabla (\tilde{v}+\epsilon)|^{p(x)-2} \nabla (\tilde{v}+\epsilon). \nabla\bigg(\frac{(\tilde{v}+\epsilon)^q - (v+ \epsilon)^q}{(\tilde{v}+ \epsilon)^{q-1}}\bigg)~dx\\
          &\leq \int_{\{v > \tilde{v}\}} h \bigg( \frac{v^{q-1}}{(v+\epsilon)^{q-1}} - \frac{\tilde{v}^{q-1}}{(\tilde{v}+ \epsilon)^{q-1}} \bigg)((v+\epsilon)^q- (\tilde{v}+ \epsilon)^q) ~dx\\
          &+ \lambda \int_{\{v > \tilde{v}\}} \bigg( \frac{f(x,v)}{(v+\epsilon)^{q-1}} - \frac{f(x,\tilde{v})}{(\tilde{v}+ \epsilon)^{q-1}} \bigg)  ((v+\epsilon)^q- (\tilde{v}+ \epsilon)^q) ~dx. 
      \end{aligned}
\end{equation}
Since $\frac{\tilde{v}}{\tilde{v}+ \epsilon} \leq \frac{v}{v+ \epsilon} < 1$ in $\{v>\tilde v\}$, then we obtain
\begin{align*}
    \bigg(\frac{v^{2q-1}}{(v+\epsilon)^{q-1}} &- \frac{\tilde{v}^{2q-1}}{(\tilde{v}+ \epsilon)^{q-1}} \bigg) ((v+\epsilon)^q- (\tilde{v}+ \epsilon)^q)\\
    &= \bigg( v^q \bigg(\frac{v}{v+\epsilon}\bigg)^{q-1} - \tilde{v}^q \bigg(\frac{\tilde{v}}{\tilde{v}+ \epsilon}\bigg)^{q-1}\bigg) ((v+\epsilon)^q- (\tilde{v}+ \epsilon)^q)\\
    & \leq v^q ((v+\epsilon)^q- (\tilde{v}+ \epsilon)^q) \leq v^q (v+ \epsilon)^q \leq v^q(v+1)^q.
\end{align*}
In the same fashion, we have
\begin{align*}
    0\leq h \bigg( \frac{v^{q-1}}{(v+\epsilon)^{q-1}} - \frac{\tilde{v}^{q-1}}{(\tilde{v}+ \epsilon)^{q-1}} \bigg) ((v+\epsilon)^q- (\tilde{v}+ \epsilon)^q)\leq h (v+\epsilon)^q \leq  h(v+1)^q.
\end{align*}
Moreover, as $\epsilon \to 0$
$$\bigg(\frac{v^{2q-1}}{(v+\epsilon)^{q-1}} - \frac{\tilde{v}^{2q-1}}{(\tilde{v}+ \epsilon)^{q-1}} \bigg) ((v+\epsilon)^q- (\tilde{v}+ \epsilon)^q)\to(v^q- \tilde{v}^q)^2$$
and
$$h \bigg( \frac{v^{q-1}}{(v+\epsilon)^{q-1}} - \frac{\tilde{v}^{q-1}}{(\tilde{v}+ \epsilon)^{q-1}} \bigg)((v+\epsilon)^q- (\tilde{v}+ \epsilon)^q)\to 0$$
{\it a.e.} in $\Omega$. Then by Lebesgue dominated convergence theorem we have  
\begin{align}\label{0th}
    \int_{\{v > \tilde{v}\}} \bigg( \frac{v^{2q-1}}{(v+\epsilon)^{q-1}} - \frac{\tilde{v}^{2q-1}}{(\tilde{v}+ \epsilon)^{q-1}} \bigg) ((v+\epsilon)^q &- (\tilde{v}+ \epsilon)^q) ~dx\\
    &\to \int_{\{v > \tilde{v}\}} (v^q- \tilde{v}^q)^2~dx\nonumber
\end{align}
and
\begin{equation}\label{1sty}
    \int_{\{v > \tilde{v}\}} h \bigg( \frac{v^{q-1}}{(v+\epsilon)^{q-1}} - \frac{\tilde{v}^{q-1}}{(\tilde{v}+ \epsilon)^{q-1}} \bigg)((v+\epsilon)^q- (\tilde{v}+ \epsilon)^q)~dx \to 0.
\end{equation}
Then by using Fatou's Lemma and $(f_1)$, we have 
\begin{equation}\label{2ndy}
\begin{split}
    &-\displaystyle\liminf_{\epsilon \to 0} \int_{\{v > \tilde{v}\}} \frac{f(x,v)}{(v+\epsilon)^{q-1}} (\tilde{v}+ \epsilon)^q ~dx\leq - \int_{\{v > \tilde{v}\}} \frac{f(x,v)}{v^{q-1}} \tilde{v}^q ~dx, 
\\
   & - \displaystyle\liminf_{\epsilon \to 0} \int_{\{v > \tilde{v}\}} \frac{f(x,\tilde{v})}{(\tilde{v}+\epsilon)^{q-1}} (v+ \epsilon)^q ~dx \leq - \int_{\{v > \tilde{v}\}} \frac{f(x,\tilde{v})}{\tilde{v}^{q-1}} v^q ~dx
\end{split}
\end{equation}
and
\begin{equation}\label{3rdy}
\begin{split}
    &\int_{\{v > \tilde{v}\}}  f(x,v) (v+ \epsilon) ~dx \to \int_{\{v > \tilde{v}\}}  f(x,v) v ~dx,\\
   & \int_{\{v > \tilde{v}\}}  f(x,\tilde{v}) (\tilde{v}+ \epsilon) ~dx\to \int_{\{v > \tilde{v}\}}  f(x,\tilde{v}) \tilde{v}~dx.	
\end{split}
\end{equation}
By Lemma \ref{implem} we have,
\begin{equation}\label{4th}
     \begin{aligned}
          \int_{\{v > \tilde{v}\}} &|\nabla (v+\epsilon)|^{p(x)-2} \nabla (v+\epsilon). \nabla\bigg(\frac{(v+\epsilon)^q - (\tilde{v}+ \epsilon)^q}{(v+ \epsilon)^{q-1}}\bigg)~dx \\
          &+ \int_{\{v > \tilde{v}\}} |\nabla (\tilde{v}+\epsilon)|^{p(x)-2} \nabla (\tilde{v}+\epsilon). \nabla\bigg(\frac{(\tilde{v}+\epsilon)^q - (v+ \epsilon)^q}{(\tilde{v}+ \epsilon)^{q-1}}\bigg)~dx\\
          & \geq 0.
      \end{aligned}
\end{equation}
Then plugging \eqref{0th}-\eqref{4th} and taking $\displaystyle\limsup_{\epsilon \to 0} $ in \eqref{uni}, we get by $(f_3)$
$$ 0 \leq \int_{\{v > \tilde{v}\}} (v^q- \tilde{v}^q)^2~dx \leq \lambda \int_{\{v > \tilde{v}\}}  \bigg( \frac{f(x,v)}{v^{q-1}} - \frac{f(x,\tilde{v})}{\tilde{v}^{q-1}} \bigg)  (v^q- \tilde{v}^q) ~dx\leq 0.$$
It implies $ \tilde{v} \geq v$.
\end{proof}
\begin{cor}
Let $\lambda>0$,  $f:\Omega\times \R^+ \to \R^+$ satisfying $(f_1)$-$(f_3)$ and $h_0 \in L^{2}(\Omega)^+ \cap L^{\gamma}(\Omega)$ where $\gamma >\max \{ 1,\frac{N}{p_{-}} \}$. Then there exists a unique positive distributional solution $u \in \mathcal{D}(\mathcal{R})\cap L^\infty(\Omega)$ of \eqref{E_12} in the same sense as in Corollary \ref{L^infty}.\\
Moreover if $u_1$ and $u_2$ are two positive distributional solutions of \eqref{E_12} for $h_1, h_2 \in L^2(\Omega)^+$ then $\mathcal{R}$ satisfies
\begin{equation}\label{accretive}
    \|(u_1-u_2)^+\|_{L^2} \leq \|(u_1-u_2+\lambda (\mathcal{R}u_1- \mathcal{R}u_2))^+\|_{L^{2}}.
\end{equation}
\end{cor}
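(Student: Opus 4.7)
The plan is to translate the results already obtained for the $v$-formulation \eqref{E_1} into the $u$-formulation \eqref{E_12} via the substitution $u=v^q$, then read off uniqueness from Theorem \ref{uniq3} and accretivity from Remark \ref{rema}. First, since $h_0\in L^2(\Omega)^+$, Theorem \ref{L^2} produces a positive weak solution $v\in {\mathbf W}$ to \eqref{E_1}. The additional integrability $h_0\in L^\gamma(\Omega)$ with $\gamma>\max\{1,N/p_-\}$ yields $v\in L^\infty(\Omega)$ (again as stated in Theorem \ref{L^2}). Setting $u\eqdef v^q$, I will verify $u\in {\mathcal D}({\mathcal R})\cap L^\infty(\Omega)$: the condition $u^{1/q}=v\in W^{1,p(x)}_0(\Omega)$ is immediate, $u\in L^2(\Omega)$ follows from $v\in L^\infty(\Omega)$ and the boundedness of $\Omega$, and the identity ${\mathcal R}u=(h_0-u)/\lambda$ (seen in the distributional sense by dividing the $v$-equation by $v^{q-1}$ as in Corollary \ref{L^infty}) shows ${\mathcal R}u\in L^2(\Omega)$ since both $h_0$ and $u$ are. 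From the same manipulation, $u$ is a distributional solution of \eqref{E_12} in the sense stated.

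For the regularity needed to be in ${\mathcal D}({\mathcal R})\cap C^1$ (if one wants to mirror Corollary \ref{L^infty} further), I would invoke the $C^{1,\alpha}$-regularity results used in section 3, but for the $\mathcal{D}(\mathcal R)\cap L^\infty$ framework of this corollary the above suffices. Next, uniqueness: let $u_1,u_2\in {\mathcal D}({\mathcal R})\cap L^\infty(\Omega)$ be two positive distributional solutions of \eqref{E_12} associated to the same $h_0$. Then $v_i\eqdef u_i^{1/q}$ satisfy \eqref{E_1} (both as subsolutions and supersolutions) with a right-hand side $h_0\in L^2(\Omega)\subset L^{p_0}(\Omega)$ for $p_0=2$. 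Applying Theorem \ref{uniq3} twice (swapping the roles of sub- and supersolution) yields $v_1=v_2$, hence $u_1=u_2$.

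Finally, the accretivity estimate \eqref{accretive}. Given $u_1,u_2$ positive distributional solutions of \eqref{E_12} corresponding to potentials $h_1,h_2\in L^2(\Omega)^+$ (i.e.\ $h_i=u_i+\lambda{\mathcal R}u_i$ with $u_i=v_i^q$, $v_i$ obtained from Theorem \ref{L^2}), Remark \ref{rema} gives
\begin{equation*}
\|(v_1^q-v_2^q)^+\|_{L^2}\leq \|(h_1-h_2)^+\|_{L^2}.
\end{equation*}
Rewriting the left-hand side as $\|(u_1-u_2)^+\|_{L^2}$ and the right-hand side as $\|(u_1-u_2+\lambda({\mathcal R}u_1-{\mathcal R}u_2))^+\|_{L^2}$ (using $h_i-u_i=\lambda{\mathcal R}u_i$) produces \eqref{accretive}.

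The main obstacle is to make sure that the solution $v$ built by approximation in Theorem \ref{L^2} is exactly the one to which Remark \ref{rema} and Theorem \ref{uniq3} apply. Since the limit $v$ was shown to be independent of the approximating sequence, and since $L^\infty$-solutions of \eqref{E_1} are unique by Theorem \ref{uniq3} (comparing two such solutions as sub/supersolutions), both statements do apply without ambiguity, closing the argument.
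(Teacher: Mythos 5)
Your proposal is correct in outline but follows a genuinely different route from the paper for the existence/distributional-identity part. The paper works variationally: by Theorem \ref{L^2}, Remark \ref{othsol} and Theorem \ref{uniq3}, the solution $v_0$ of \eqref{E_1} is the \emph{unique} global minimizer of $\mathcal{J}$, hence $u_0=v_0^q$ minimizes $\mathcal{E}(u)=\mathcal{J}(u^{1/q})$ on $\dot V^q_+\cap L^2(\Omega)$, and the distributional formulation of \eqref{E_12} is obtained by perturbing the minimizer, $u_0+t\phi$, exactly as in the proof of Corollary \ref{L^infty}; the contraction \eqref{accretive} then comes from Remark \ref{rema}, as in your last step. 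You instead derive the distributional identity by testing the weak formulation \eqref{64} with $\phi/v^{q-1}$, $\phi\in C^1_c(\Omega)$, and you get uniqueness by applying Theorem \ref{uniq3} to $v_i=u_i^{1/q}$. Both routes are legitimate; yours is more direct, while the paper's variational route avoids any manipulation of quotient test functions. Two points in your version deserve explicit justification. First, the admissibility of $\phi/v^{q-1}$ in $\mathbf{W}$ (and the very definition of $\mathcal{R}u$) uses that $v$ is bounded away from $0$ on $\mathrm{supp}\,\phi$; this is available because the proof of Theorem \ref{L^2} gives $v\geq v_1>0$ with $v_1\in C^0_d(\overline\Omega)^+$, but you should say so. Second, and more substantially, in the uniqueness step you treat arbitrary positive distributional solutions $u_i\in\mathcal{D}(\mathcal{R})\cap L^\infty$ of \eqref{E_12} as sub/supersolutions of \eqref{E_1} in the sense required by Theorem \ref{uniq3}, i.e.\ tested against the non-compactly-supported functions $\bigl(\frac{(v+\epsilon)^q-(\tilde v+\epsilon)^q}{(v+\epsilon)^{q-1}}\bigr)^{\pm}\in\mathbf{W}$; the distributional formulation only provides the identity against $C^1_c$ test functions, so you need to upgrade it (e.g.\ noting that $-\Delta_{p(x)}v_i$ equals an $L^2(\Omega)$ function, $f(x,v_i)+v_i^{q-1}(h_0-v_i^q)/\lambda$, and using the density $W^{1,p(x)}_0(\Omega)=\overline{C^\infty_0(\Omega)}^{W^{1,p(x)}}$ together with $\mathbf{W}\hookrightarrow L^2(\Omega)$ to extend the test class). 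The same identification is implicitly needed when you apply Remark \ref{rema} to get \eqref{accretive} for arbitrary distributional solutions; the paper sidesteps this by identifying its solution with the unique minimizer, so spelling out this density step is what closes your argument.
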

\begin{proof}
Define the functional  energy $\mathcal{E}$ on $\dot V^{q}_+\cap L^2(\Omega)$ as $\mathcal{E}(u) =\mathcal{J}(u^{1/q})$ where $\mathcal{J}$ is given by \eqref{defJ}.\\
By Theorem \ref{L^2}, Remark \ref{othsol} and Theorem \ref{uniq3}, $v_0$ is the unique positive solution of \eqref{E_1} and then unique global minimizer of $\mathcal{J}.$ We take $u_0=v_0^q$ and proceed as the proof of Corollary \ref{L^infty} and we obtain
$u_0 = v_0^q$ is a distributional solution of ($\ref{E_12}$). Finally Remark \ref{rema} gives \eqref{accretive}.
\end{proof}
\subsection{Existence of a weak solution to \eqref{P}}

 
In this section, in light of Remark \ref{conversion}, we consider the problem \eqref{WeakS1}
 and establish the existence of weak solution when {$ v_0 \in C^0_d(\overline\Omega)^+ \cap W_0^{1,p(x)}(\Omega).$ 

\noindent{\bf Proof of Theorem \ref{wea}:}}
Let $n^* \in \mathbb{N}^*$ and set $\Delta_t =T/n^*.$ For $0 \leq n \leq n^*,$ we define $t_n= n \Delta_t.$\\
{\bf Step 1 :} Approximation of $h$\\
 For $n \in \{ 1,2,\dots n^* \}$, we define for $ t \in [t_{n-1}, t_n)$ and $ x \in \Omega$
 $$h_{\Delta_t} (t,x)= h^n(x)  \eqdef  \dfrac{1}{\Delta_t} \int_{t_{n-1}}^{t_n} h(s,x) ds.$$
 Then by Jensen inequality, 
 \begin{align*}
      \|h_{\Delta_t}\|^2_{L^2(Q_T)} &=\Delta_t \sum_{n=1}^{N} \|h^n\|^2_{L^2(\Omega)} = \Delta_t  \sum_{n=1}^{N} \left\|\dfrac{1}{\Delta_t} \int_{t_{n-1}}^{t_n} h(s,x ) ds \right\|^2_{L^2(\Omega)}\\
      & \leq\sum_{n=1}^{N} \int_{t_{n-1}}^{t_n} \|h(s,.)\|^2_{L^2(\Omega)} ds  \leq \|h\|^2_{L^2(Q_T)}.
 \end{align*}
Hence $h_{\Delta_t} \in L^2(Q_T)$ and $h^n \in L^2(\Omega)$ and let $\epsilon >0 $, then there exists a function $h_{\epsilon} \in C_0^1(Q_T)$ such that $\|h-h_{\epsilon}\|_{L^2(Q_T)} < \frac{\epsilon}{3}.$\\
 Hence, $$\|(h_{\epsilon})_{\Delta_t} - h_{\Delta_t}\|_{L^2(Q_T)} \to 0.$$
 Since $\|h_{\epsilon}- (h_{\epsilon})_{\Delta_t}\|_{L^2(Q_T)} \to 0$ as $\Delta_t \to 0$ then for small enough $\Delta_t$ we have
 $$\|h_{\Delta_t} -h \|_{L^2(Q_T)} \leq \|(h_{\epsilon})_{\Delta_t} - h_{\Delta_t}\|_{L^2(Q_T)} + \|h_{\epsilon}- (h_{\epsilon})_{\Delta_t}\|_{L^2(Q_T)} + \|h-h_{\epsilon}\|_{L^2(Q_T)} < \epsilon.$$
 Hence $h_{\Delta_t} \to h $ in $L^2(Q_T).$\\
{\bf Step 2: } Time discretization of \eqref{WeakS1}\\
Define the following implicit Euler scheme and for $n \geq 1 , v_n $ is the weak solution of 
\begin{equation}\label{Itersch1}
     \left\{
         \begin{alignedat}{2} 
             {}  \bigg(\dfrac{v^q_n-v^q_{n-1}}{\Delta_t}\bigg) v_n^{q-1} -\Delta_{p(x)} {v_n}  
             & {}=  h^n v_n^{q-1}+ f(x,v_n)
             && \quad\mbox{ in }\,  \Omega \,;
             \\
             v_n & {}> 0
             && \quad\mbox{ in }\,  \Omega \,;
             \\
             v_n & {}= 0
             && \quad\mbox{ on }\,  \partial\Omega \,.
        \end{alignedat}
     \right.
\end{equation}
Note that the sequence $(v_n)_{n=1,2,\dots ,n^*}$ is well-defined. Indeed for $n=1 $ the existence and the uniqueness of $v_1 \in C^{1,\alpha}(\overline{\Omega}) \cap C_d^0(\overline{\Omega})^+ $ follows from Theorems \ref{exis} and \ref{uniq3} with $h = \Delta_t h^1 +v_0^q \in L^{\infty}(\Omega)^+$. Hence by induction we obtain in the same way the existence and the uniqueness of the solution $v_n$ for any $n=2,3,\dots ,n^*$ where $v_n \in C^{1,\alpha}(\overline{\Omega}) \cap C^0_d(\overline{\Omega})^+.$\\
{\bf Step 3}: Existence of a subsolution and supersolution \\
Now we construct a subsolution and a supersolution $ \underline{w} $ and $\overline{w}$ of \eqref{Itersch1} such that for each $n \in \{0, 1, 2, \dots, n^*\}, v_n$ satisfies $ 0 <\underline{w} \leq v_n \leq \overline{w}$.\\ Rewrite \eqref{Itersch1} as
\begin{align}\label{subsolut}
v_n^{2q-1} - \Delta_t \Delta_{p(x)} v_n = \left( \Delta _t h^n + v_{n-1}^q \right) v_n^{q-1} + \Delta_t f(x, v_n).
\end{align}
Then following arguments in the proof of Theorems \ref{exis} and \ref{uniq3}, from Theorem \ref{C^1} and from Lemma \ref{HMP}, for any $\mu >0$ there exists a unique weak solution, $w_{\mu} \in C^{1,\alpha}(\overline{\Omega}) \cap C^0_d(\overline{\Omega})^+$, to
\begin{equation}\label{subsolu}
     \left\{
         \begin{alignedat}{2} 
             {} - \Delta_{p(x)} {w}  
             & {}=  \mu(h_0 w^{q-1}+ f(x,w))
             && \quad\mbox{ in }\,  \Omega \,;
             \\
             w& {}> 0
             && \quad\mbox{ in }\,  \Omega \,;             
             \\
             w& {}= 0
             && \quad\mbox{ on }\,  \partial\Omega \,.
        \end{alignedat}
     \right.
\end{equation}
Let $\mu_1 < \mu_2$ and $w_{\mu_1}, w_{\mu_2}$ be weak solutions of \eqref{subsolu}. Then, 
\begin{equation*}
\int_{\Omega} |\nabla w_{\mu_1}|^{p(x)-2} \nabla w_{\mu_1}. \nabla \phi~dx= \mu_1 \int_{\Omega} (h_0 w_{\mu_1}^{q-1}+f(x, w_{\mu_1})) \phi ~dx,\\
\end{equation*}
\begin{equation*}
\int_{\Omega} |\nabla w_{\mu_2}|^{p(x)-2} \nabla w_{\mu_1}. \nabla \psi~dx= \mu_2 \int_{\Omega} (h_0 w_{\mu_2}^{q-1}+ f(x, w_{\mu_2})) \psi ~dx.\\
\end{equation*}
Subtracting the last two equations with $\phi= \frac{(w_{\mu_1}^q- w_{\mu_2}^q)^+}{w_{\mu_1}^{q-1}}$ and $\psi= \frac{(w_{\mu_2}^q- w_{\mu_1}^q)^-}{w_{\mu_2}^{q-1}} \in W_0^{1,p(x)}(\Omega)$ we obtain, by Lemma \ref{implem} and $(f_3)$, $w_{\mu_1} \leq w_{\mu_2}.$\\
Then by using Theorems \ref{C^1} and \ref{regi}, we can choose $\mu$ small enough such that $\|w_{\mu} \|_{C^{1, \alpha}(\overline{\Omega})} \leq C_{\mu_0}$ for all $\mu \leq \mu_0$ and $\|w_\mu\|_{L^{\infty}(\Omega)} \to 0$ as $\mu \to 0$. Therefore $\{w_{\mu}: \mu \leq \mu_0\}$ is uniformly bounded and equicontinuous in $C^1(\overline\Omega)$ and by Arzela Ascoli theorem $\|w_{\mu}\|_{C^1(\overline\Omega)} \rightarrow 0$ as $\mu \rightarrow 0$ up to a subsequence. Then by mean value theorem we can choose $\mu$ small enough such that there exists $\underline{w} \in C^{1,\alpha}(\overline{\Omega}) \cap C^0_d(\overline\Omega)^+$ such that $0< \underline{w} \eqdef w_\mu \leq v_0.$ Also $\underline{w}$ is the subsolution of \eqref{subsolut} for $n=1$ {\it i.e.} 
\begin{equation*}
\begin{split}
\int_{\Omega} \underline{w}^{2q-1} \phi~dx + \Delta_t \int_{\Omega} |\nabla \underline{w}|^{p(x)-2} \nabla \underline{w}. \nabla \phi~dx &\leq \Delta_t \int_{\Omega} (h^1 \underline{w}^{q-1}+ f(x,\underline{w})) \phi ~dx\\
& \ \ \ + \int_{\Omega} v_0^q \underline{w}^{q-1} \phi ~dx
\end{split}
\end{equation*}
for all $\phi \in W_0^{1,p(x)}(\Omega)$ and $\phi \geq 0.$ We also recall $v_1$ satisfies   
\begin{equation*}
\begin{split}
\int_{\Omega} v_1^{2q-1} \psi~dx + \Delta_t \int_{\Omega} |\nabla v_1|^{p(x)-2} \nabla v_1. \nabla \psi~dx &=\Delta_t \int_{\Omega} (h^1 v_1^{q-1}+ f(x,v_1)) \psi ~dx\\
& \ \ \ + \int_{\Omega} v_0^q v_1^{q-1} \psi ~dx
\end{split}
\end{equation*}
for all $\psi \in W_0^{1,p(x)}(\Omega)$.\\
By Theorem \ref{uniq3}, we obtain, $\underline{w} \leq v_1$ and then by induction a subsolution $\underline{w}$ such that $0 <\underline{w} \leq v_n$ for all $n=0,1,2,\dots,n^*$.\\
Now we construct a supersolution. For that, we consider the following problem: 
\begin{equation}\label{supprob}
     \left\{
         \begin{alignedat}{2} 
             {}  -\Delta_{p(x)} {w}  
             & {}= \|h\|_{L^{\infty}} w^{q-1} + f(x,w) + K
             && \quad\mbox{ in }\,  \Omega \,;
             \\
             w & {}> 0
             && \quad\mbox{ in }\,  \Omega \,; 
             \\
             w & {}= 0
             && \quad\mbox{ on }\,  \partial\Omega \,.
        \end{alignedat}
     \right.
\end{equation}
As above, there exists a unique weak solution to \eqref{supprob}, $\overline{w}_K \in C^1(\overline\Omega) \cap C^0_d(\overline\Omega)^+$. Let $w_{K}$ be the unique weak solution of 
\begin{equation}\label{lambd}
     \left\{
         \begin{alignedat}{2} 
             {} - \Delta_{p(x)} {w_K}  
             & {}=  K
             && \quad\mbox{ in }\,  \Omega \,;
             \\
             w_{K} & {}= 0
             && \quad\mbox{ on }\,  \partial\Omega \,.
        \end{alignedat}
     \right.
\end{equation}
From Theorem \ref{regi}, $w_{K} \geq C K^{1/(p_+-1+\nu)} \dist(x, \partial\Omega)$ where $\nu \in (0,1)$ and $\|w_{K}\|_{L^{\infty}(\Omega)} \to \infty$ as $K \to \infty $. Then by weak comparison principle we can choose $K$ large enough such that there exists such that $v_0 \leq w_K < \overline{w}\eqdef \overline{w}_K.$ We easily check that $\overline{w}$ is the supersolution of \eqref{subsolut} for $n=1$ {\it i.e.}
\begin{equation*}
\begin{split}
\int_{\Omega} \overline{w}^{2q-1} \phi~dx + \Delta_t \int_{\Omega} |\nabla \overline{w}|^{p(x)-2} \nabla \overline{w}). \nabla \phi~dx & \geq \Delta_t \int_{\Omega} (h^1 \overline{w}^{q-1}+ K+ f(x,\overline{w}) \phi ~dx\\
& \ \ \ + \int_{\Omega} v_0^q \overline{w}^{q-1} \phi ~dx
\end{split}
\end{equation*}
for all $\phi \in W_0^{1,p(x)}(\Omega)$ and $\phi \geq 0.$
From Theorem \ref{uniq3}, we get $\overline{w} \geq v_1$ and then by induction we have $\overline{w} \geq v_n $ for all $n \in \{1,2,\dots n^*\}.$\\
{\bf Step 4}:  Energy estimates \\
Define the function for $n=1,\dots , n^*$ and $t\in [t_{n-1}, t_n)$
\begin{align*}
    v_{\Delta_t}(t)= v_n \ \ \text{and} \ \ \tilde{v}_{\Delta_t}(t)= \dfrac{t-t_{n-1}}{\Delta_t}(v_n^q-v_{n-1}^q)+ v_{n-1}^q
\end{align*}
which satisfies
\begin{align}\label{passto}
    v_{\Delta_t}^{q-1} \dfrac{\partial \tilde{v}_{\Delta_t}}{\partial t} - \Delta_{p(x)} v_{\Delta_t}= f(x,v_{\Delta_t}) + h^n v_{\Delta_t}^{q-1}.
\end{align} 
Multiplying the equation \eqref{Itersch1} by $\dfrac{v_n^q-v_{n-1}^q}{v_n^{q-1}}$ and summing from $n=1$ to $ n' \leq n^*$, we get
\begin{align*}
    \sum_{n=1}^{n'} \int_{\Omega} &\Delta_t \bigg(\dfrac{v_n^q- v_{n-1}^q}{\Delta_t}\bigg)^2 ~dx + \sum_{n=1}^{n^{'}} \int_{\Omega} |\nabla v_n|^{p(x)-2} \nabla v_n.\nabla\bigg(\dfrac{v_n^q-v_{n-1}^q}{v_n^{q-1}}\bigg) ~dx \\
    &=  \sum_{n=1}^{n'} \int_{\Omega} h^n (v_n^q-v_{n-1}^q) ~dx + \sum_{n=1}^{n'} \int_{\Omega} \dfrac{f(x,v_n)}{v_n^{q-1}} (v_n^q-v_{n-1}^q) ~dx.
\end{align*}
Then from Young inequality we have,
\begin{align*}
    \sum_{n=1}^{n'} \int_{\Omega} & \Delta_t \bigg(\dfrac{v_n^q- v_{n-1}^q}{\Delta_t}\bigg)^2 ~dx + \sum_{n=1}^{n'} \int_{\Omega} |\nabla v_n|^{p(x)-2} \nabla v_n.\nabla\bigg(\dfrac{v_n^q-v_{n-1}^q}{v_n^{q-1}}\bigg) ~dx \\
    &\leq \sum_{n=1}^{n'} \Delta_t \|h^n\|^2_{L^2} + \dfrac{1}{4}\sum_{n=1}^{n'} \int_{\Omega} \Delta_t \bigg(\dfrac{v_n^q- v_{n-1}^q}{\Delta_t}\bigg)^2 ~dx\\
    &+\sum_{n=1}^{n'} \Delta_t \left\| \dfrac{f(x,v_n)}{v_n^{q-1}}\right\|_{L^2}^2 + \dfrac{1}{4} \sum_{n=1}^{n'} \int_{\Omega}\Delta_t \bigg(\dfrac{v_n^q- v_{n-1}^q}{\Delta_t}\bigg)^2 ~dx
\end{align*}
{\it i.e.} 
\begin{align*}
    \dfrac{1}{2} \sum_{n=1}^{n'} \int_{\Omega} \Delta_t \bigg(\dfrac{v_n^q- v_{n-1}^q}{\Delta_t}\bigg)^2 dx&+ \sum_{n=1}^{n'} \int_{\Omega} |\nabla v_n|^{p(x)-2} \nabla v_n. \nabla\bigg(\dfrac{v_n^q-v_{n-1}^q}{v_n^{q-1}}\bigg) ~dx \\
    &\leq \sum_{n=1}^{n'} \Delta_t \|h^n\|^2_{L^2} +\sum_{n=1}^{n'} \Delta_t  \left\| \dfrac{f(x,v_n)}{v_n^{q-1}}\right\|_{L^2}^2.
\end{align*}
Using $\underline{w} \leq v_n \leq \overline{w} $, from \eqref{infty} and $q \leq \frac{N}{2}+1$, we obtain 
\begin{equation}\label{require}
\begin{split}
\int_{\Omega} \left|\frac{f(x,v_n)}{v_n^{(q-1)}}\right|^2 ~dx \leq C_1 \int_{\Omega} \frac{1}{\dist^{2(q-1)}(x,\partial\Omega)} + \dist^{2(p_- -q)}(x,\partial\Omega) ~dx \leq C 
\end{split}
\end{equation}
where $C$ is independent of $n$. Then by {\bf Step 1}, we obtain 
\begin{align}\label{deribound}
    \bigg(\dfrac{\partial \tilde{v}_{\Delta_t}}{\partial t}\bigg) \ \text{is bounded in} \ L^2(Q_T)\  \text{uniformly in} \ \Delta_t.
\end{align}
Now from Lemma \ref{implem}, we have 
\begin{align*}
    |\nabla v_n|^{p(x)-2} \nabla v_n. \nabla \bigg(\dfrac{v_{n-1}^q}{v_n^{q-1}}\bigg) &\leq |\nabla v_{n-1}|^q |\nabla v_n|^{p(x)-q}\\
    & \leq \dfrac{q}{p(x)} |\nabla v_{n-1}|^{p(x)} + \dfrac{(p(x)-q)}{p(x)} |\nabla v_n|^{p(x)}.
\end{align*}
Then we obtain for any $n'\geq 1$
\begin{align*}
    \sum_{n=1}^{n'} \Delta_t\|h^n\|^2_{L^2} &+\sum_{n=1}^{n'} \Delta_t \left\| \dfrac{f(x,v_n)}{v_n^{q-1}}\right\|_{L^2}^2  \\
    &\geq \sum_{n=1}^{n'} \int_{\Omega} |\nabla v_n|^{p(x)-2} \nabla v_n. \nabla\bigg(\dfrac{v_n^q-v_{n-1}^q}{v_n^{q-1}}\bigg) ~dx\\
    &\geq \sum_{n=1}^{n'} \left[ \int_{\Omega} |\nabla v_n|^{p(x)} ~dx -  \int_{\Omega} \dfrac{q}{p(x)} |\nabla v_{n-1}|^{p(x)} ~dx\right.\\
     &\ \ \ \ \ \ \ \ \ \ \ \ \ \ \ \ \ \ \ \ \ \ \ \ \ \ \ \ \left.- \int_{\Omega}  \dfrac{(p(x)-q)}{p(x)} |\nabla v_n|^{p(x)} ~dx \right]\\
    & \geq q \int_{\Omega}  \dfrac{|\nabla v_{n'}|^{p(x)}}{p(x)} ~dx - q  \int_{\Omega}  \dfrac{|\nabla v_0|^{p(x)}}{p(x)} ~dx
\end{align*}
which implies that
\begin{align}\label{iterfuncbound1}
   (v_{\Delta_t}) \ \text{is bounded in } L^{\infty}(0,T;W_0^{1,p(x)}(\Omega)) \ \text{uniformly in }\ \Delta_t.
\end{align}
 Since 
\begin{equation*}
\begin{split}
\nabla(\tilde{v}^{1/q}_{\Delta_t}) &= \dfrac{1}{q} \zeta \nabla v_n \bigg(\zeta +(1-\zeta) \bigg(\dfrac{v_{n-1}}{v_n}\bigg)^q \bigg)^{(1-q)/q} \\
& \ \ \ \ \ \ \ \ + (1-\zeta) \nabla v_{n-1} \bigg((1-\zeta) +\zeta \bigg(\dfrac{v_{n}}{v_{n-1}}\bigg)^q \bigg)^{(1-q)/q}   
\end{split} 
\end{equation*}
where $\zeta= \dfrac{t-t_{n-1}}{\Delta_t}$, then we conclude that
\begin{align}\label{iterfuncbound2}
   (\tilde{v}^{1/q}_{\Delta_t}) \ \text{is bounded in } L^{\infty}(0,T;W_0^{1,p(x)}(\Omega)) \ \text{uniformly in }\ \Delta_t.
\end{align} 
Since $\left(\dfrac{v_n}{v_{n-1}}\right)$ is uniformly bounded in $L^{\infty}(\Omega)$,  $v_{\Delta_t} \overset{\ast}{\rightharpoonup} v$ and $\tilde{v}^{1/q}_{\Delta_t} \overset{\ast}{\rightharpoonup} \tilde{v}$ in $L^{\infty}(0,T;W_0^{1,p(x)}(\Omega)).$
Furthermore using \eqref{deribound}, we have 
\begin{equation}\label{uniq}
 \sup_{t \in (0,T)} \|\tilde{v}_{\Delta_t}^{1/q}- v_{\Delta_t}\|_{L^{2q}(\Omega)}^{2q} \leq \sup_{t \in (0,T)} \|\tilde{v}_{\Delta_t} - v_{\Delta_t}^q\|^2_{L^2(\Omega)} \leq \Delta_t \to 0 \ \text{as}\  \Delta_t \to 0.      
\end{equation}
It follows from \eqref{uniq} that $v=\tilde{v}.$
By mean value theorem and \eqref{deribound}, we get that  $(\tilde{v}_{\Delta_t})_{\Delta_t}$ is equicontinuous in $C(0,T; L^r(\Omega))$ for $1 < r \leq 2$. Thus using $\underline{w}^q \leq \tilde{v}_{\Delta_t} \leq \overline{w}^q$ together with the interpolation inequality $\|.\|_r \leq \|.\|_{\infty}^{\alpha} \|.\|_2^{1-\alpha}$, with $\dfrac{1}{r}= \dfrac{\alpha}{\infty} + \dfrac{1-\alpha}{2},$ we obtain that $(\tilde{v}_{\Delta_t})_{\Delta_t}$ and $(\tilde{v}^{1/q}_{\Delta_t})_{\Delta_t}$ is equicontinuous in $C(0,T; L^r(\Omega))$ for any $1< r < +\infty$. Again using interpolation inequality and Sobolev embedding, we get as $\Delta_t \to 0^+$ and up to a subsequence that for all $r>1$
\begin{align}\label{equi2}
    \tilde{v}_{\Delta_t} \to v^q \ \text{in} \ C(0,T;L^r(\Omega)), 
\end{align}
and 
\begin{align}\label{equi3}
    v_{\Delta_t} \to v \ \text{in} \ L^{\infty}(0,T;L^r(\Omega)).
\end{align}
From \eqref{deribound} and \eqref{equi2}, we obtain
\begin{align}\label{deribound2}
    \dfrac{\partial \tilde{v}_{\Delta_t}}{\partial t} \to \dfrac{\partial v^q}{\partial t} \ \ \text{in} \ L^2(Q_T).
\end{align}
{\bf Step 5} : $v$ satisfies \eqref{weaksodef}\\
Multiplying  \eqref{passto} by $(v_{\Delta_t}-v)$ and integrating by parts, we get
\begin{align*}
    \int_0^T \int_{\Omega} v_{\Delta_t}^{q-1} &\dfrac{\partial \tilde{v}_{\Delta_t}}{\partial t} (v_{\Delta_t}-v) ~dxdt + \int_0^T \int_{\Omega} |\nabla v_{\Delta_t}|^{p(x)-2} \nabla v_{\Delta_t}. \nabla(v_{\Delta_t}-v)~dxdt\\
    &= \int_0^T \int_{\Omega}  f(x,v_{\Delta_t}) (v_{\Delta_t}-v)~dxdt + \int_0^T \int_{\Omega} h^n v_{\Delta_t}^{q-1} (v_{\Delta_t}-v)~dxdt.
\end{align*}
From \eqref{equi3} and \eqref{deribound2} , we have 
$$\left|\int_0^T \int_{\Omega} v_{\Delta_t}^{q-1} \dfrac{\partial \tilde{v}_{\Delta_t}}{\partial t} (v_{\Delta_t}-v)~dxdt\right|+\left|\int_0^T \int_{\Omega} h^n v_{\Delta_t}^{q-1} (v_{\Delta_t}-v)~dxdt\right| = o_{\Delta_t}(1)$$
and from \eqref{iterfuncbound1}, \eqref{iterfuncbound2}, \eqref{equi3} and Lebesgue Dominated convergence theorem, 
 $$\int_0^T \int_{\Omega}  f(x,v_{\Delta_t}) (v_{\Delta_t}-v)~dx = o_{\Delta_t}(1).$$
 Then we obtain 
 \begin{align*}
     \int_0^T \int_{\Omega} |\nabla v_{\Delta_t}|^{p(x)-2} \nabla v_{\Delta_t}.\nabla(v_{\Delta_t}-v) ~dx \to 0 \ \text{as} \  \Delta_t \to 0^+.
 \end{align*}
 Then from [\textbf{Step 4}, Proof of Theorem 1.1, \cite{*10}] and from classical compactness argument we get 
 \begin{align}\label{maincon}
      |\nabla v_{\Delta_{t}}|^{p(x)-2} \nabla v_{\Delta_{t}} \to |\nabla v|^{p(x)-2} \nabla v \ \ \text{in} \ (L^{p(x)/(p(x)-1)}(Q_T))^N.
 \end{align}
 From \eqref{uniq} and \eqref{equi2} we have, 
\begin{equation}\label{prop}
\begin{split}
         \|v_{\Delta_t}^{q-1} - v^{q-1}\|_{L^2(Q_T)} &\leq  \|v_{\Delta_t}^{q-1} - v^{q-1}\|_{L^{\infty}(0,T;L^2)}\\
         &\leq \|v_{\Delta_t}^{q-1} - v^{q-1}\|_{L^{\infty}(0,T;L^{\frac{2q}{q-1}})}\\
				&\leq \|v_{\Delta_t}^{q} - v^{q}\|_{L^{\infty}(0,T;L^2)}\\
         &\leq \|v_{\Delta_t}^{q} - \tilde{v}_{\Delta_t}\|_{L^{\infty}(0,T;L^2)} + \|\tilde{v}_{\Delta_t} - v^{q}\|_{L^{\infty}(0,T;L^2)} \to 0
      \end{split}
 \end{equation}
 as  $\Delta_t \to 0$. By H\"older inequality we have for $\phi \in C_c^{\infty}(Q_T)$
 \begin{align*}
     &\int_0^{T}\int_{\Omega} \bigg(v_{\Delta_t}^{q-1} \frac{\partial \tilde{v}_{\Delta_t}}{\partial t} - \frac{\partial v^q}{\partial t} v^{q-1}\bigg) \phi~dx\\
     &=\int_0^{T}\int_{\Omega}  v_{\Delta_t}^{q-1}  \bigg(\frac{\partial \tilde{v}_{\Delta_t}}{\partial t} - \frac{\partial v^q}{\partial t} \bigg) \phi~dx +  \int_0^{T}\int_{\Omega}  \frac{\partial v^q}{\partial t} (v_{\Delta_t}^{q-1}-  v^{q-1})\phi ~dx\\
     & \leq \|v_{\Delta_t}^{q-1} \phi\|_{L^2(Q_T)} \left\|\left(\frac{\partial \tilde{v}_{\Delta_t}}{\partial t} - \frac{\partial v^q}{\partial t} \right)\right\|_{L^2(Q_T)} + \|v_{\Delta_t}^{q-1} - v^{q-1}\|_{L^2(Q_T)} \left\| \phi \frac{\partial \tilde{v}_{\Delta_t}}{\partial t}\right\|_{L^2(Q_T)} 
 \end{align*}
 and 
 \begin{align*}
     \int_0^T \int_{\Omega} &(h^n v_{\Delta_t}^{q-1} - h v^{q-1}) \phi~dx \\
     &= \int_0^T \int_{\Omega} h^n (v_{\Delta_t}^{q-1} - v^{q-1}) \phi ~dx + \int_0^T \int_{\Omega} (h^n- h) v^{q-1} \phi~dx\\
     & \leq \|h^n \phi\|_{L^2(Q_T)} \|v_{\Delta_t}^{q-1} - v^{q-1}\|_{L^2(Q_T)} + \|v^{q-1} \phi\|_{L^2(Q_T)} \|h^n-h\|_{L^2(Q_T)}.
 \end{align*}
 Then from \eqref{deribound}, \eqref{equi3}, \eqref{deribound2}, \eqref{prop} and {\bf Step 1} we obtain
 \begin{equation}\label{disti1}
\begin{split}
     &\int_0^{T}\int_{\Omega} \bigg(v_{\Delta_t}^{q-1} \frac{\partial \tilde{v}_{\Delta_t}}{\partial t} - \frac{\partial v^q}{\partial t} v^{q-1}\bigg) \phi~dx \to 0,\\
     &\int_0^T \int_{\Omega} (h^n v_{\Delta_t}^{q-1} - h v^{q-1}) \phi ~dx\to 0 \ \ \text{as} \ \ \Delta_t \to 0.
		\end{split}
 \end{equation}
 From \eqref{equi3} we have $f(x,v_{\Delta_t}) \to f(x,v)$ pointwise and from \eqref{iterfuncbound1} together with \eqref{iterfuncbound2} we have $\int_{\Omega} f(x,v_{\Delta_t}) \phi~dx $ is bounded uniformly in $\Delta_t$. Then by Lebesgue dominated convergence theorem we have
 \begin{equation}\label{disti3}
     \int_0^T \int_{\Omega} (f(x,v_{\Delta_t}) - f(x,v)) \phi ~dx \to 0 \ \ \text{as} \ \ \Delta_t \to 0.
 \end{equation}
 Then finally gathering \eqref{maincon}, \eqref{disti1} and \eqref{disti3}, we conclude by passing to the limits in equation \eqref{passto} that $v$ is weak solution of \eqref{WeakS1}.
\qed
\begin{remark}
For $q > \frac{N}{2}+1$, if $f$ satisfies $\displaystyle\lim_{s \to 0^+} \frac{f(x,s)}{s^{\alpha}} =0$ where $\alpha > q-1-\frac{N}{2}$ then Theorem \ref{wea} holds. Since $\underline{w} \leq v_n \leq \overline{w}$ then \eqref{require} is in this case
\begin{equation*}
\begin{split}
\int_{\Omega} \left|\frac{f(x,v_n)}{v_n^{q-1}}\right|^2~dx &\leq C_1 \int_{\Omega} \frac{\overline w\,^{2 \alpha}}{\dist^{2(q-1)}(x,\partial\Omega)}~dx + C_2\leq C
\end{split}
\end{equation*}
where $C$ is independent of $n.$ 
\end{remark}
\begin{remark}
All the results in Section \ref{section1}, Section \ref{section2} and Theorem \ref{wea} hold if we replace the assumption $(f_2)$ by $h \geq c >0.$
\end{remark}
\noindent{\bf Proof of Theorem \ref{contr}:}
For a given function $g$, let $\|g\|_{2^+}\eqdef \|[g]^+\|_{L^2(\Omega)}$. For $z \in \mathcal{D}(\mathcal{R})$ and $r, k \in L^\infty(Q_T)^+$ satisfying assumptions in Theorem \ref{contr}, set \\
 $$\phi(t,s)=\|r(t)-k(s)\|_{2^+} \quad \forall\,(t,s) \in [0,T] \times [0,T], $$ 
for $t \in [-T,T]$
 $$b(t,r,k)= \|u_0^q -z\|_{2^+} + \|v_0^q -z\|_{2^+} + |t| \|{\mathcal R}z\|_{2^+} + \int^{t^+}_{0} \|r(\tau)\|_{2^+} d\tau + \int^{t^-}_{0} \|k(\tau)\|_{2^+} d\tau $$
 and 
 \begin{equation*} 
\psi(t,s) = b(t-s,r,k) +
    \begin{cases}
      \int_0^s \phi(t-s+\tau, \tau) d\tau  & \text{if}\ \  0 \leq s \leq t \leq T   \\
      \int_0^t \phi(\tau,s-t+\tau) d\tau  & \text{if} \ \  0 \leq t \leq s \leq T
    \end{cases}
  \end{equation*}
  is a solution of 
  \begin{equation}\label{psi}
     \left\{
         \begin{alignedat}{2} 
             {}  \dfrac{\partial \psi}{\partial t} (t,s) +\dfrac{\partial \psi}{\partial s} (t,s)   
             & {}=  \phi(t,s)
             && \quad\mbox{ on }\, (t,s) \in [0,T] \times [0,T]\,;\\
             \psi(t,0) & {}= b(t,r,k)
             && \quad\mbox{ on }\, t \in[0,T]\,;\\
             \psi(0,s) & {}= b(-s,r,k)
             && \quad\mbox{ on }\, s \in[0,T]\,.
        \end{alignedat}
     \right.
\end{equation}
Define the following iterative scheme, $u^0=u_0^q$ and for $n \geq 1 , u^n $ is the solution of 
\begin{equation}\label{Itersch}
     \left\{
         \begin{alignedat}{2} 
             {}  \dfrac{u^n-u^{n-1}}{\Delta_t} + \mathcal{R} u^n
             & {}=  h^n 
            && \quad\mbox{ in }\, \Omega \,;
             \\
             u^n & {}= 0
           && \quad\mbox{ on }\,  \partial\Omega \,.
       \end{alignedat}
     \right.
\end{equation}
 Note that the sequence $\{u^n\}_{n=1,2,\dots ,N}$ is well defined. Indeed for $n=1 $ the existence and the uniqueness of $u^1 \in \mathcal{D}(\mathcal{R})$ follows from Corollary \ref{L^infty} with $h = \Delta_t h^1 +u^0 \in L^\infty(\Omega)^+$ and $\lambda =\Delta_t$. Hence by induction we obtain in the same way the existence of the solution $u^n$ for any $n=2,3,\dots ,N$ where $u^n \in \mathcal{D}(\mathcal R).$\\
Moreover let denote by $(u_\epsilon^n)$ the solution of \eqref{Itersch} with $\Delta_t= \epsilon , h=r, r^n= \dfrac{1}{\epsilon}\int_{(n-1)\epsilon}^{n \epsilon} r(\tau,.) d\tau$ and $(u_\eta^m)$ the solution of \eqref{Itersch} with $\Delta_t= \eta , h=k, k^m= \dfrac{1}{\eta} \int_{(m-1)\eta}^{m \eta} k(\tau,.) d\tau$ respectively i.e we have 
\begin{equation}\label{fineq}
     \left\{
         \begin{alignedat}{2} 
             {}  \dfrac{u^n_\epsilon -u^{n-1}_{\epsilon}}{\epsilon} + \mathcal{R} u_{\epsilon}^n
             & {}=  r^n ;
             \\
              {}  \dfrac{u^m_\eta -u^{m-1}_{\eta}}{\eta} +\mathcal{R} u^m_{\eta}
             & {}=  k^m. 
        \end{alignedat}
     \right.
\end{equation}
For $(n,m) \in \mathbb{N}^*$, multiplying the equation in  \eqref{fineq} by $\dfrac{\epsilon \eta}{\epsilon + \eta}$ and then subtracting the two expressions we get,
\begin{align*}
    \dfrac{\eta}{\eta +\epsilon } (u_\epsilon^n-u_\epsilon^{n-1}) &+ \dfrac{\eta \epsilon}{\eta +\epsilon } (\mathcal{R} u_\epsilon^n- \mathcal{R}u_\eta^{m})- \dfrac{\epsilon}{\eta +\epsilon } (u_\eta^m-u_\eta^{m-1}) = \dfrac{\eta \epsilon }{\eta +\epsilon } (r^n- k^m).
\end{align*}
Then we infer that
\begin{equation*}
\begin{split}
    u_{\epsilon}^n-u_\eta^m + \dfrac{\epsilon \eta}{\epsilon + \eta} (\mathcal{R}u_{\epsilon}^n-\mathcal{R}u_\eta^m) &= \dfrac{\epsilon \eta}{\epsilon + \eta} (r^n-k^m)+ \dfrac{\eta}{\epsilon + \eta} (u_{\epsilon}^{n-1}-u_\eta^m) \\
    &+ \dfrac{\epsilon}{\epsilon + \eta} (u_{\epsilon}^n-u_\eta^{m-1}).
\end{split}
\end{equation*}
Let $\Phi_{n,m}^{\epsilon,\eta}= \|u_{\epsilon}^n- u_{\eta}^m\|_{2^+}$ and since $\mathcal{R}$ satisfies \eqref{accretive} and setting $\lambda= \dfrac{\epsilon \eta}{\epsilon +\eta}$, we get 
\begin{align*}
    \Phi_{n,m}^{\epsilon,\eta} &= \|u_{\epsilon}^n- u_{\eta}^m\|_{2^+} \leq \|u_{\epsilon}^n- u_{\eta}^m + \dfrac{\epsilon \eta}{\epsilon + \eta} (\mathcal{R}u_\epsilon^n- \mathcal{R}u_\eta^m)\|_{2^+}\\
    &\leq \dfrac{\epsilon \eta}{\epsilon + \eta} \|r^n-k^m\|_{2^+} + \dfrac{\eta}{\epsilon + \eta} \|u_{\epsilon}^{n-1}-u_\eta^m\|_{2^+} + \dfrac{\epsilon}{\epsilon + \eta} \|u_{\epsilon}^n-u_\eta^{m-1}\|_{2^+}. 
\end{align*}
Then by elementary calculations, we get  
$$\Phi_{n,0}^{\epsilon, \eta} = \|u_\epsilon^n- u_{\eta}\|_{2+} \leq b(t_n, r_{\epsilon}, k_\eta)$$  and
$$\Phi_{0,m}^{\epsilon, \eta} \leq b(-s_m,r_{\epsilon}, k_\eta).$$
Then by using above computations we get ,
  $\Phi_{n,m}^{\epsilon,\eta} \leq \psi_{n,m}^{\epsilon,\eta} $ where $ \psi_{n,m}^{\epsilon ,\eta}$ satisfies  
 $$ \psi_{n,m}^{\epsilon \eta}= \dfrac{\epsilon \eta}{\epsilon + \eta} \|(r^n-k^m)\|_{2^+} + \dfrac{\eta}{\epsilon + \eta} \|\psi_{n-1,m}^{\epsilon,\eta}\|_{2^+} + \dfrac{\epsilon}{\epsilon + \eta} \|\psi_{n,m-1}^{\epsilon,\eta}\|_{2^+} $$ and 
  $ \psi_{n,0}^{\epsilon,\eta} = b(t_n, r_\epsilon, k_\eta)$ and $\psi_{0,m}^{\epsilon,\eta} =b(-s_m,r_\epsilon, k_\eta).$\\
For $(t,s) \in (t_{n-1}, t_n) \times (s_{m-1}, s_m)$, set $\phi^{\epsilon, \eta} (t,s)= \|r_{\epsilon}(t)-k_\eta(s)\|_{2^+}, $ 
\begin{equation*}
\psi^{\epsilon, \eta}= \psi_{n,m}^{\epsilon, \eta},\  b_{\epsilon, \eta}(t,r,k)=  b(t_n, r_\epsilon, k_\eta),\  b_{\epsilon, \eta}(-s,r,k)=  b(-s_m, r_\epsilon, k_\eta).
\end{equation*}
Then by elementary calculations $\psi^{\epsilon, \eta}$ satisfies the following discrete version of \eqref{psi},
  \begin{equation*}
     \left\{
         \begin{alignedat}{2} 
             {}  \dfrac{\psi^{\epsilon, \eta} (t,s)- \psi^{\epsilon, \eta} (t-\epsilon,s) }{\epsilon} + \dfrac{\psi^{\epsilon, \eta} (t,s)- \psi^{\epsilon, \eta} (t,s-\eta) }{\eta}   
             & {}=  \phi^{\epsilon, \eta}(t,s);
             \\
             \psi{\epsilon, \eta}(t,0) & {}= b_{\epsilon, \eta}(t,r,k);
             \\ \psi^{\epsilon, \eta}(0,s) & {}= b_{\epsilon, \eta}(s,r,k).
        \end{alignedat}
     \right.
\end{equation*}
 Since $r_{\epsilon} \to r $ in $L^2(Q_T)$ then $b_{\epsilon, \eta} (., r, k) \to b(.,r,k)$ in $L^{\infty}([0,T])$ and $\phi^{\epsilon, \eta} \to \phi $ in $L^{\infty}([0,T] \times[0,T])$ and we deduce that $ \rho_{\epsilon, \eta}= \|\psi^{\epsilon,\eta}- \psi\|_{L^{\infty}([0,T] \times[0,T])} \to 0 $ (for more details see for instance [\cite{*5}, Chapter 4, Lemma 4.3, page 136] and [\cite{*5}, Chapter 4, Proof of Theorem 4.1, page 138]). Therefore,
 \begin{equation*}
     \|u_\epsilon(t)-u_\eta(s)\|_{2^+}= \Phi^{\epsilon, \eta} (t,s) \leq \psi^{\epsilon, \eta} (t,s) \leq \psi(t,s) +\rho_{\epsilon,\eta}.
 \end{equation*}
 Since $u_{\epsilon}(t)=v_{\epsilon}^q(t)$ and $u_{\eta}(t)=v_{\eta}^q(t)$, we obtain
  \begin{equation}\label{q}
     \|v^q_\epsilon(t)-v^q_\eta(s)\|_{2^+}= \Phi^{\epsilon, \eta} (t,s) \leq \psi^{\epsilon, \eta} (t,s) \leq \psi(t,s) +\rho_{\epsilon,\eta} .
 \end{equation}
From Theorem \ref{wea}, $v^q_\epsilon$ and $v_\eta^q$ satisfies $0 <\underline{w} < v_{\epsilon}, v_{\eta}< \overline{w}$ where $\underline{w}, \overline{w}$ are subsolution and supersolution defined in \eqref{subsolu} and \eqref{supprob} and $v^q_\epsilon \to v_1^q$ and $v_\eta^q \to v_2^q$ {\it a.e.} in $\Omega$ where $v_1$ and $v_2$ are weak solutions of \eqref{WeakS1} with initial data $u_0,\ v_0$ respectively. Since $v_\epsilon^q \to v_1^q$ and $v_\eta^q \to v_2^q$ in $L^\infty(0,T;L^2(\Omega))$ and passing to the limit in \eqref{q} as $\epsilon, \eta \to 0$  with $t=s$ we get 
\begin{align*}
\|v_1^q(t)- v_2^q(t)\|_{2^+} &\leq \|v_1^q(t)- v_\epsilon^q(t)\|_{2^+} + \|v_\eta^q(t)- v_2^q(t)\|_{2^+}+ \|v_\epsilon^q(t)- v_\eta^q(t)\|_{2^+}\\
&\leq \|u_0^q-z\|_{2^+} +\|v_0^q-z\|_{2^+}  + \int_0^t \|r(\gamma)-k(\gamma)\|_{2^+} d\gamma.
\end{align*}
Then \eqref{ntk} follows since we can choose $z$ arbitrary close to $v_0^q$ and with $r=h$, $k=g$.
\qed
 

\section{An application to nonhomogeneous operators}
In this final section, we prove Theorem \ref{ray-strict}. To this aim, we first study the properties of a related energy functional.
Let $m\geq 1$ and $K :\Omega \times \mathbb{R}^N \to \mathbb{R}^+$ be a continuous differentiable function which satisfies the following conditions:
\begin{description}
      \item[(k1)] $K \in C^1(\Omega \times \mathbb{R}^N) \cap C^2(\Omega \times \mathbb{R}^N \backslash \{ 0\}).$
 \end{description}
   \begin{description}
      \item[(k2)]  Ellipticity condition: $\exists \ k_1 \geq 0 \ \text{and}\ \gamma\in (0, \infty) \ \text{such that}$   
   \end{description}
       $$\sum_{i,j=1}^N \dfrac{\partial^2 K}{\partial \xi_i \partial\xi_j}(x,\xi) \eta_i \eta_j \geq \gamma (k_1+|\xi|)^{m-2} |\eta|^2.$$
 \begin{description}
      \item[(k3)] Growth condition:  $\exists \ k_2 \geq 0 \ \text{and}\  \Gamma \in (0, \infty) \ \text{such that}$   
   \end{description}
     $$\sum_{i,j=1}^N \left|\dfrac{\partial^2 K}{\partial \xi_i \partial\xi_j}(x,\xi)\right| \leq \Gamma (k_2+|\xi|)^{m-2}$$
for all $\xi \in \mathbb{R}^N \backslash \{ 0\}$ and $\eta \in \mathbb{R}^N$.\\
\begin{remark}\label{ggrowth}
From the assumption {\bf(k2)}, it  follows that K is strictly convex and from {\bf(k1)}-{\bf(k3)} there exists some positive constant $\gamma_1$ and $\gamma_2$ with $0<\gamma_1 \leq \gamma_2 < +\infty$ and some nonnegative constants $\Gamma_1$ and $\Gamma_2$ such that 
\begin{equation*}
\gamma_1 |\xi|^m - \Gamma_1 \leq K(x,\xi) \leq \gamma_2 |\xi|^m +\Gamma_2
\end{equation*}
for $x \in \Omega$ and $\xi \in \mathbb{R}^N \backslash \{0\}.$
\end{remark}
\noindent Consider the associated functional $\mathcal{J}_m$ defined by  
\begin{equation*}
\mathcal{J}_m(u)  \eqdef \int_\Omega \dfrac{|u|^{p(x)}}{p(x)}  K\bigg(x,\dfrac{\nabla u }{u}\bigg)^{\frac{p(x)}{m}} ~dx.
\end{equation*}
for any positive function $u\in  W_0^{1,p(x)}(\Omega)$.
\noindent Now we extend Lemma 2.4 in \cite{*1} as follows:
\begin{thm}\label{thm1}
 Let $K : \Omega \times \mathbb{R}^N \to \mathbb{R}^+$ satisfying {\bf(k1)}-{\bf(k3)} for some $m \in [1, p_-].$ Then, the function ${\mathcal E}:$ $\dot{V}_+^m\cap L^\infty(\Omega) \to\R^+$,  defined by ${\mathcal E}(u)\eqdef\mathcal{J}_m(u^{1/m})$, is ray-strictly convex (even strictly convex if $p(\cdot)\not\equiv m$).
\end{thm}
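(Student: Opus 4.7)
The plan is to recast $\mathcal{E}$ as the integral of a jointly convex function of $(u,\nabla u)$ using the classical perspective construction, and then to isolate the equality case by unpacking two successive Jensen inequalities. Since $\nabla(u^{1/m})/u^{1/m} = \nabla u/(mu)$, a direct computation gives
\begin{equation*}
\mathcal{E}(u) = \int_\Omega \frac{1}{p(x)}\, H\!\left(x,\, u,\, \frac{\nabla u}{m}\right) dx, \quad \text{with}\quad H(x,v,\xi) \eqdef v^{\frac{p(x)}{m}}\, K\!\left(x,\, \frac{\xi}{v}\right)^{\frac{p(x)}{m}},
\end{equation*}
on $\Omega \times (0,\infty) \times \R^N$. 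Introduce the perspective $\tilde K(x,v,\xi) \eqdef v\, K(x,\xi/v)$. Convexity of $K(x,\cdot)$ (guaranteed by {\bf (k2)}) yields convexity of $\tilde K$ in $(v,\xi)$, and strict convexity of $K(x,\cdot)$ upgrades this to strict convexity off rays through the origin: equality $\tilde K((1-\lambda)X_1 + \lambda X_2) = (1-\lambda)\tilde K(X_1) + \lambda\tilde K(X_2)$ holds iff $\xi_1/v_1 = \xi_2/v_2$. Since $p(x)/m \geq 1$, the map $t \mapsto t^{p(x)/m}$ is convex and nondecreasing on $\R^+$, so $H = \tilde K^{p(x)/m}$ is convex; integrating and using linearity of $u\mapsto (u,\nabla u/m)$ yields convexity of $\mathcal{E}$.

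For ray-strict convexity, assume equality holds in $\mathcal{E}((1-\theta)u_1+\theta u_2) \leq (1-\theta)\mathcal{E}(u_1)+\theta\mathcal{E}(u_2)$. The pointwise inequality for $H$ must then be saturated a.e., and tracking the two layered Jensen inequalities gives, with $X_i(x) \eqdef (u_i(x),\nabla u_i(x)/m)$: (a) for a.e.\ $x \in \Omega$, the vectors $X_1(x), X_2(x)$ lie on a common ray through the origin, equivalently $\nabla \ln u_1(x) = \nabla \ln u_2(x)$; and (b) at every $x$ where $p(x)/m > 1$, the strict convexity of $t \mapsto t^{p(x)/m}$ further forces $\tilde K(x,X_1(x)) = \tilde K(x,X_2(x))$, which combined with (a) and the $1$-homogeneity of $\tilde K$ along rays implies $X_1(x) = X_2(x)$, i.e.\ $u_1(x) = u_2(x)$.

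If $p \equiv m$ on $\Omega$, only (a) survives globally, giving $\nabla \ln(u_1/u_2) = 0$ a.e.; since $u_1, u_2 > 0$ in $\Omega$, $u_1/u_2 \in W^{1,1}_{\mathrm{loc}}(\Omega)$ and connectedness of $\Omega$ yields $u_1 = C u_2$ for some $C > 0$, which is the ray-strict convexity. If $p \not\equiv m$, continuity of $p$ and $p \geq m$ force $U \eqdef \{x \in \Omega : p(x) > m\}$ to be a nonempty open subset of $\Omega$; (b) gives $u_1 = u_2$ on $U$, while (a) gives $\nabla(u_1/u_2) = 0$ a.e.\ on $\Omega \setminus U$. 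Thus $\nabla(u_1/u_2) = 0$ a.e.\ on the connected set $\Omega$ and $u_1/u_2 \equiv 1$ on $U \neq \emptyset$, which forces $u_1 = u_2$ on all of $\Omega$. The main technical hurdle is the careful propagation of the equality condition through the composition $H = \tilde K^{p(x)/m}$, so as to combine the ray condition from the perspective-convexity step with the pointwise rigidity coming from strict convexity of $t \mapsto t^{p(x)/m}$ on the region $\{p > m\}$; the global passage from pointwise identities to the functional equality is then a standard zero-gradient argument on the connected domain.
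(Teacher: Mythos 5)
Your proposal is correct and follows essentially the same route as the paper: rewrite $\mathcal{E}(u)=\int_\Omega \frac{1}{p(x)}\bigl(u\,K(x,\nabla u/(mu))\bigr)^{p(x)/m}dx$, exploit the perspective-type (ray-strict) convexity of $u\mapsto u\,K(x,\nabla u/(mu))$ coming from the strict convexity of $K(x,\cdot)$, and compose with the convex map $t\mapsto t^{p(x)/m}$, strictly convex where $p(x)>m$. The only difference is that you spell out the equality-case bookkeeping (the zero-gradient/connectedness argument and the role of the open set $\{p>m\}$) which the paper leaves implicit; just note that your step (b) tacitly uses $K(x,\zeta)>0$ along the common direction $\zeta=\nabla u_1/(mu_1)=\nabla u_2/(mu_2)$, an assumption the paper makes implicitly as well and which holds in its application $K(x,\xi)=(\epsilon+|\xi|^2)^{m/2}$.
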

\begin{proof}
We observe that for $u \in \dot{V}^m_+\cap L^\infty(\Omega)$
$$\mathcal{E}(u)= \int_{\Omega}  \dfrac{1}{p(x)}  \left(u K\left(x, \dfrac{\nabla u}{m u}\right)\right)^{\frac{p(x)}{m}}~dx.$$
 Therefore, since for $1 \leq m \leq p_- ,\ t\to t^{p(x)/m}$ is convex in $\R^+$ (even strictly convex if $p(x)>m$) it is enough to prove that 
 $$\dot{V}_+^m \ni u \to u K\bigg(x,\dfrac{\nabla u}{mu}\bigg)$$ is ray-strictly convex.  To achieve this goal, let $\theta \in (0,1)$ and $ u_1, u_2 \in \dot{V}_+^m$ then by using the strict convexity of $K$ we obtain, for $x \in \Omega$
\begin{align*}
     ((1&-\theta)u_1+\theta u_2) K\bigg(x, \dfrac{(1-\theta)\nabla u_1 + \theta \nabla u_2}{m ((1-\theta) u_1 + \theta u_2)} \bigg) \\
     &= ((1-\theta)u_1+\theta u_2) K\bigg(x, \dfrac{(1-\theta) u_1}{((1-\theta) u_1 + \theta u_2)} \dfrac{\nabla u_1}{m u_1}+ \dfrac{\theta u_2}{((1-\theta) u_1 + \theta u_2)}  \dfrac{\nabla u_2}{m u_2} \bigg)\\
     &\leq ((1-\theta)u_1+\theta u_2) \bigg(\dfrac{(1-\theta) u_1 }{((1-\theta) u_1 + \theta u_2) } K\bigg(x, \dfrac{\nabla u_1}{m u_1}\bigg) \\
     &\ \ \ \ \ \ \ \ \ \ \ \ \ \ \ \ \ \ \ \ \ \ \ \ \ \ \ \ \ \ \ \ \ \ \ \ \ \ \ \ \ \ + \dfrac{\theta u_2}{((1-\theta) u_1 + \theta u_2)} K\bigg(x, \dfrac{\nabla u_2}{m u_2}\bigg) \bigg)\\
     &= (1-\theta) u_1 K\bigg(x, \dfrac{\nabla u_1}{m u_1} \bigg) + \theta u_2 K\bigg(x, \dfrac{\nabla u_2}{m u_2}\bigg).
\end{align*}
The above inequality is always strict unless $\displaystyle\frac{\nabla u_1}{u_1}=\frac{\nabla u_2}{u_2}$, {\it i.e.} $u_1/u_2\equiv\mathrm{Const}$.
\end{proof}
 \noindent\textbf{Proof of Theorem \ref{ray-strict}:}
Consider the functional
$ \mathcal{J}_{\epsilon} : W_0^{1,p(x)} (\Omega) \to \mathbb{R}$ , defined by
 \begin{equation*}
     \mathcal{J}_{\epsilon} (u) = \int_{\Omega} \dfrac{(|\nabla u|^2 + \epsilon u^2)^{p(x)/2}}{p(x)} ~dx - \int_{\Omega} G(x,u) ~dx
 \end{equation*}
 where the potential $G(x,t)$ defined as
 \begin{equation*}
     G(x,t) =
     \left\{
     \begin{alignedat}{2}
         & \int_0^t g(x,s) ds 
          && \quad\mbox{ if }\, 0 \leq t < \infty  \,;
          \\ 
          & 0
          && \quad\mbox{ if }\, -\infty < t < 0 \,.
          \end{alignedat}
    \right.
\end{equation*}
Assumptions $(f_1)$, $(\tilde{g})$ and Remark \ref{ggrowth} ensure that
$\mathcal{J}_{\epsilon}$ is well defined, coercive and continuous. Then there exists at least one global minimizer of $\mathcal{J}_{\epsilon}$ on $W_0^{1,p(x)}(\Omega)$, say $u_0$. We can easily prove that  $u_0$ is nonnegative and nontrivial. \\
Since ${\mathcal J}_\epsilon$ is differentiable, we deduce that $u_0$ is a weak solution of \eqref{2}. Now from  Theorems \ref{bdd and C^0} and \ref{C^1} in Appendix A, we obtain that any weak solution  $u$ to \eqref{2} belongs to $C^{1, \alpha}(\overline\Omega) $ for some $ \alpha \in (0,1)$ and $u > 0 $ in $\Omega$ and $ \dfrac{\partial u}{\partial \vec n} < 0$ on $\partial\Omega$. Therefore any weak solution belongs to $C^0_d(\overline\Omega)^+$.\\
Now we prove that $u_0$ is the unique weak solution to \eqref{2}.  Let $W\,:\,\dot{V}_+^m \to \R$ defined by
$$W(u)= \mathcal{J}_{\epsilon}(u^{1/m})=\int_{\Omega} \dfrac{(|\nabla (u^{1/m})|^2 + \epsilon (u^{1/m})^2)^{p(x)/2}}{p(x)} ~dx - \int_{\Omega} G(x,u^{1/m}) ~dx.$$ 
The assumption $(\tilde{g})$ together with Theorem \ref{thm1} with $K(x,\xi)=(\epsilon + |\xi|^2)^{m/2}$ imply that $W$ is strictly convex.\\
 Let $u_1 $ a weak solution to \eqref{2}. Then setting $v_0\eqdef u_0^m$, $v_1\eqdef u_1^{m} \in \dot{V}_+^m$ and $t \in [0,1]$, we define $\xi(t)\eqdef J_{\epsilon}(((1-t)v_0+ t v_1)^{1/m})$. Since $u_0$ and $u_1$ belong to $C^0_d(\overline\Omega)^+$, $\xi$ is differentiable  in $[0,1]$. From the convexity of ${\mathcal E}$, we have for any $t\in [0,1]$
\begin{equation}\label{eqn}
\xi'(0)\leq \xi'(t)\leq \xi'(1).
\end{equation}
Since $u_0$ and $u_1$ are weak solutions to \eqref{2}, $\xi'(0)=\xi'(1)=0$ and from \eqref{eqn} we get that $\xi$ is constant which contradicts the strict convexity of ${\mathcal E}$ unless $u_0\equiv u_1$.
  \qed
 \appendix 
\section{Appendix}
In this section, we recall the following regularity of weak solutions of quasilinear elliptic differential equation
\begin{equation}\label{eq1}
     \left\{
         \begin{alignedat}{2} 
             {} \div  A(x,u, Du) + B(x,u,Du) & {}= 0
             && \quad\mbox{ on }\, \Omega\,;\\
             u & {}= 0
             && \quad\mbox{ on }\, \partial\Omega\,.
        \end{alignedat}
     \right.
\end{equation}
 Now we recall the boundedness and $C^{0, \alpha}(\overline\Omega)$ regularity results of weak solutions of \eqref{eq1} satisfying the following growth conditions:
 \begin{equation}\label{1st}
\begin{split}
     &A(x,u,z) z \geq a_0 |z|^{p(x)} - b|u|^{r(x)} -c; \\
 &    |A(x,u,z)| \leq a_1 |z|^{p(x)-1} + b|u|^{\sigma(x)} +c; \\
 &   |B(x,u,z)| \leq a_2 |z|^{\alpha(x)} + b|u|^{r(x)-1} +c
\end{split}
 \end{equation}
 where $a_0, a_1, a_2, b, c$ are positive constants and $p^*$ is the Sobolev embedding exponent of $p$ and
 \begin{equation}\label{2nd}
     p(x) \leq r(x) < p^*(x), \ \sigma(x)= \dfrac{p(x)-1}{p(x)} r(x) \ \text{and}\  \alpha (x)= \dfrac{r(x)-1}{r(x)} p(x).
\end{equation}
\begin{thm}(\cite{*3}, Theorem 4.1 and Theorem 4.4)\label{bdd and C^0}
Let \eqref{1st}-\eqref{2nd} hold and $p \in \mathcal{P}^{log}(\Omega)$. If $u \in W^{1, p(x)}(\Omega)$ is a weak solution of \eqref{eq1}, then $u \in C^{0, \alpha}(\overline{\Omega)}.$ 
     \end{thm}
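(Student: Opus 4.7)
The plan is to establish the $L^\infty$ bound first and then upgrade to $C^{0,\alpha}(\overline{\Omega})$ regularity through a De Giorgi/Moser-type iteration adapted to the variable exponent setting. The key technical device throughout is the log-Hölder continuity of $p$, which yields the control
\[
r^{p(x)-p(y)} \leq C \quad\text{and}\quad \frac{1}{|B_r|}\int_{B_r}|f|^{p(x)}\,dx\leq C\Big(1+\big(\tfrac{1}{|B_r|}\int_{B_r}|f|^{p_{B_r}^-}\,dx\big)^{p_{B_r}^+/p_{B_r}^-}\Big)
\]
on sufficiently small balls $B_r$, and allows a coherent passage between the frozen constant-exponent estimates and the variable-exponent setting.

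First I would prove local boundedness. Test \eqref{eq1} against truncations of the form $\eta^{p_+}\max(u-k,0)^{s-p_++1}$ for $s\geq p_+$ and $k>0$, where $\eta\in C^1_c$ is a cutoff on concentric balls. Using the coercivity and growth conditions \eqref{1st}--\eqref{2nd} together with Young's inequality and the Sobolev embedding $W^{1,p(x)}_0\hookrightarrow L^{p^*(x)}$ of \cite{*6}, one obtains a reverse Hölder inequality on the super-level sets of $u$. Because the assumption $r(x)<p^*(x)$ makes the right-hand side strictly subcritical, a Moser iteration over an increasing sequence of exponents closes and yields $u\in L^\infty_{\mathrm{loc}}(\Omega)$ with a quantitative estimate in terms of $\|u\|_{L^{p(x)}}$. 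The boundary bound follows by the same computation with the truncation $(u-k)_+$ which belongs to $W^{1,p(x)}_0(\Omega)$, using the zero Dirichlet datum.

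Next, to upgrade to Hölder continuity in the interior, I would run a De Giorgi oscillation-decay argument: for $u$ now bounded, consider on a small ball $B_R\subset\subset\Omega$ the level sets $A_k=\{u\geq k\}\cap B_R$ at heights $k_j = M - 2^{-j}\mathrm{osc}(u,B_R)$, and derive from the above Caccioppoli inequality a geometric decay of $|A_{k_j}\cap B_{R/2}|$ à la De Giorgi. Iterating gives $\mathrm{osc}(u,B_r)\leq C r^\alpha$ for some $\alpha\in(0,1)$, hence interior Hölder continuity. Boundary regularity then follows by classical barrier/flattening arguments using the $C^{1,\gamma}$ smoothness of $\partial\Omega$ assumed in $(\mathbf{\Omega})$ and the interior-sphere condition, giving $u\in C^{0,\alpha}(\overline{\Omega})$.

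The main obstacle will be the interplay between the variable exponent and the nonstandard growth on the right-hand side: the exponents $\sigma(x)$ and $\alpha(x)$ in \eqref{2nd} are chosen precisely so that the lower-order terms scale like the principal part under Sobolev embedding, but transporting this scaling through each iteration step requires the log-Hölder modulus of $p$ at every dyadic scale, together with care in distinguishing the modular and norm formulations of $\|\cdot\|_{L^{p(x)}}$ via \eqref{a}--\eqref{b}. Once this is handled, the proof proceeds along the lines of the Fan--Zhao and Acerbi--Mingione type arguments cited in \cite{*3}.
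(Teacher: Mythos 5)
Your sketch is essentially the standard De Giorgi-type argument (Caccioppoli inequalities on level sets, iteration for local boundedness, oscillation decay, boundary barriers) in the variable exponent setting, which is precisely the strategy of the cited source: note that the paper itself gives no proof of this theorem, but simply recalls it from \cite{*3} (Theorems 4.1 and 4.4), whose proof proceeds via generalized De Giorgi classes exactly along the lines you outline. So your proposal is consistent with the intended argument, and no genuinely different route or gap needs to be flagged.
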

Theorem \ref{C^1} below ensures $C^{1, \alpha}(\overline\Omega)$ regularity to weak solutions of \eqref{eq1} under the additional assumptions on $ p,\ A$ and $B$: \\
{\bf Assumptions} $\mathbf{(A_k)}: $  $ A= (A_1, A_2, \dots,A_n) \in C(\overline{\Omega} \times \mathbb{R} \times \mathbb{R}^N, \mathbb{R}^N) .$ For every $(x,u) \in \overline{\Omega} \times \mathbb{R},$ $ A(x,u, .) \in C^1 (\mathbb{R}^N \backslash \{0\}, \mathbb{R}^N)$, there exist a nonnegative constants $k_1, k_2, k_3 \geq 0$, a nonincreasing continuous function $\lambda : [0,\infty) \to (0,\infty)$ and  a nondecreasing continuous function $\Lambda : [0,\infty) \to (0,\infty)$ such that for all $x, x_1, x_2 \in \overline{\Omega},\ u, u_1,u_2 \in \mathbb{R} ,\eta \in \mathbb{R}^N \backslash \{0\}$ and $\xi = (\xi_1, \xi_2, \dots, \xi_n) \in \mathbb{R}^N,$ the following conditions are satisfied 
\begin{equation*}
\begin{split}
   &A(x,u,0)=0,\\
&\sum_{i,j} \dfrac{\partial A_j(x,u,\eta)}{\partial \eta_i}(x,u, \eta) \xi_i \xi_j \geq \lambda(|u|) (k_1+ |\eta|^2)^{(p(x)-2)/2} |\xi|^2,
\\
&\sum_{i,j} \bigg|\dfrac{\partial A_j(x,u,\eta)}{\partial \eta_i}(x,u, \eta)\bigg| \leq \Lambda(|u|) (k_2+ |\eta|^2)^{(p(x)-2)/2} \mbox{ and }
\\
&|A(x_1,u_1, \eta) - A(x_2,u_2, \eta)| \leq \Lambda(\max\{|u_1|, |u_2|\}) (|x_1-x_2|^{\beta_1} +|u_1-u_2|^{\beta_2}) \\
    &\ \ \ \ \ \ \times [(k+|\eta|^2)^{(p(x_1)-2)/2}+ (k+|\eta|^2)^{(p(x_2)-2)/2}] |\eta| (1+ |\log (k_3+ |\eta|^2)|).
\end{split}
\end{equation*}
{\bf Assumption} $\mathbf{(B)}: $ $ B :\overline{\Omega} \times \mathbb{R} \times \mathbb{R}^N \to \mathbb{R},$ the function $B(x,u, \eta)$ is measurable in $x$ and is continuous in $(u ,\eta)$, and 
  \begin{equation*}
      |B(x,u, \eta)| \leq \Lambda(|u|) (1+ |\eta|^{p(x)}), \ \ \ \ \ \ \ \forall (x,u,\eta) \in \overline{\Omega} \times \mathbb{R} \times \mathbb{R}^N.
  \end{equation*}
\begin{thm}(\cite{*2}, Theorem 1.2)\label{C^1}
 Let assumptions $\mathbf{(A_k)}$,  $\mathbf{(B)}$ hold. Assume $p$ belongs to $C^{0, \beta}(\overline{\Omega})$, for some $\beta \in (0,1)$. Suppose that $\Omega$ satisfy $(\mathbf{\Omega}).$ If $u \in W_0^{1,p(x)}(\Omega) \cap L^{\infty}(\Omega)$ is a weak solution of \eqref{eq1}, then $u \in C^{1, \alpha}(\overline{\Omega})$ where $\alpha \in (0,1)$ and $\|u\|_{C^{1,\alpha}(\overline{\Omega})}$ depends upon $ p_{-}, p_{+}, \lambda(M), \Lambda(M), \beta_1, \beta_2, M, \Omega$ where $M \eqdef \|u\|_{L^{\infty}(\Omega)}.$ 
\end{thm}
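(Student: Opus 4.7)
The plan is to prove the theorem in three stages: first a local $L^\infty$ bound on $\nabla u$, then interior $C^{1,\alpha}$ regularity via a freezing-the-exponent argument, and finally boundary $C^{1,\alpha}$ regularity exploiting the $C^{1,\gamma}$ smoothness of $\partial \Omega$. Throughout, the a priori bound $\|u\|_{L^\infty} \le M$ allows me to treat $\lambda(M)$ and $\Lambda(M)$ from $(\mathbf{A_k})$ as fixed constants, reducing the problem to a uniformly controlled (though degenerate) nonlinear operator with variable-exponent structure.

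First I would establish $\nabla u \in L^\infty_{\mathrm{loc}}(\Omega)$ by a Moser/De Giorgi iteration on Caccioppoli-type estimates derived from $(\mathbf{A_k})$--$(\mathbf{B})$. The critical term $|\nabla u|^{p(x)}$ on the right-hand side must be absorbed via Young's inequality against the ellipticity term $(k_1+|\nabla u|^2)^{(p(x)-2)/2}|\nabla u|^2$, with $p(x)$ frozen to $p_+$ or $p_-$ in the appropriate regime. The variable-exponent Sobolev embedding together with the Hölder continuity of $p$ supplies the necessary reverse H\"older/self-improvement step.

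Next, for interior Hölder continuity of $\nabla u$: fix $x_0\in\Omega$ and on a small ball $B_r(x_0)\Subset\Omega$ introduce the frozen auxiliary problem $\div A(x_0,u(x_0),\nabla v)=0$ with $v|_{\partial B_r(x_0)}=u|_{\partial B_r(x_0)}$. The Tolksdorf/DiBenedetto $C^{1,\alpha}$ theory for constant-exponent $p$-Laplacian-type operators yields an oscillation decay for $\nabla v$ on $B_{\theta r}(x_0)$. Comparing $u$ and $v$ via the Hölder-in-$(x,u)$ estimate on $A$ from $(\mathbf{A_k})$, combined with $u\in C^{0,\alpha}(\overline\Omega)$ from Theorem \ref{bdd and C^0}, yields a perturbation error of order $r^\kappa$ for some $\kappa>0$. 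A dyadic iteration then delivers $\nabla u\in C^{0,\alpha}_{\mathrm{loc}}(\Omega)$. For boundary regularity, I would flatten $\partial\Omega$ locally by a $C^{1,\gamma}$ diffeomorphism and run the same freezing-and-comparison scheme on half-balls, using barriers built from the interior sphere condition in $(\mathbf{\Omega})$ to pin down $u$ near $\partial\Omega$.

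The main obstacle is the logarithmic factor $1+|\log(k_3+|\eta|^2)|$ appearing in the continuity assumption on $A$ inside $(\mathbf{A_k})$. This log reflects the fact that the natural scaling of the equation depends on $p(x)$, so freezing $p$ introduces a perturbation that is not a priori uniformly small in $|\nabla u|$. It must be absorbed via the identity $(k+|\eta|^2)^{(p(x_1)-p(x_2))/2}=\exp\bigl(\tfrac{1}{2}(p(x_1)-p(x_2))\log(k+|\eta|^2)\bigr)$ combined with the $L^\infty$ bound on $\nabla u$ from the first step and the estimate $|p(x_1)-p(x_2)|\le [p]_{C^{0,\beta}}|x_1-x_2|^\beta$, which converts the log into a bounded multiplicative factor and makes the perturbation effectively of order $|x_1-x_2|^\beta$. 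The explicit dependence of $\|u\|_{C^{1,\alpha}}$ on $p_\pm,\lambda(M),\Lambda(M),\beta_1,\beta_2,M,\Omega$ then follows by tracking constants through each step of the iteration.
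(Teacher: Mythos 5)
You should first be aware that the paper does not prove this statement at all: Theorem \ref{C^1} is quoted verbatim from the cited reference (\cite{*2}, Theorem 1.2, Fan's global $C^{1,\alpha}$ regularity result) and is used in the paper as a black box, so there is no internal proof to compare against. Your outline is, in substance, a compressed version of the strategy of that reference and its antecedents (Fan--Zhao for $C^{0,\alpha}$, Acerbi--Mingione/Fan for gradient estimates, Lieberman/Tolksdorf-type comparison for the frozen problem, flattening for the boundary), and your identification of the logarithmic factor in $(\mathbf{A_k})$ as the price of freezing the exponent, handled through $|p(x_1)-p(x_2)|\lesssim |x_1-x_2|^{\beta}$ and the a priori gradient bound, is exactly the right diagnosis of the main technical point.

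That said, as a proof the proposal has genuine gaps rather than being merely terse. First, in the gradient-boundedness step you claim the critical term coming from $(\mathbf{B})$, namely $\Lambda(M)(1+|\nabla u|^{p(x)})$, can be ``absorbed via Young's inequality against the ellipticity term'': it cannot, since it has exactly the same growth in $|\nabla u|$ as the coercive term, with no small parameter. The standard remedies are either exponential-type test functions (of the form $(e^{\lambda u}-1)$ on level sets) or exploiting that $\mathrm{osc}\,u$ is small on small balls, which is only available after the $C^{0,\alpha}$ estimate of Theorem \ref{bdd and C^0}; your ordering (gradient bound first, H\"older continuity of $u$ invoked later) obscures this dependence and, as written, the Caccioppoli step fails. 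Second, the reverse-H\"older/self-improvement step needs the localization $p_+(B_R)<\big(p_-(B_R)\big)^{*}$ for $R$ small, which uses the continuity of $p$ quantitatively; saying it ``is supplied by the variable-exponent Sobolev embedding'' skips the point. Finally, the comparison estimate between $u$ and the frozen solution $v$, the dyadic oscillation-decay iteration in the presence of the degenerate/singular weight $(k_1+|\xi|)^{p-2}$, and the boundary barriers on half-balls are each substantial pieces of analysis that your sketch delegates to ``known theory plus constant tracking''; they constitute the actual content of the cited twenty-page paper. In short, the plan points in the right direction, but to make it a proof you would essentially have to reproduce \cite{*2}, and the one concrete analytic claim you do make (the Young-inequality absorption) is incorrect as stated.
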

\noindent 
In the next theorem, we recall some results contained in Lemma $2.1$ of \cite{*31} and Lemma $3.2$ of \cite{*10}. Set $\varrho= \frac{p_-}{2|\Omega|^{1/N} C_0}$ where $C_0$ is the best embedding constant of $W_0^{1,1}(\Omega) \subset L^{\frac{N}{N-1}}(\Omega).$
\begin{thm}\label{regi}
Let $K>0$ and $w_K \in W_0^{1,p(x)}(\Omega) \cap C^{1, \alpha}(\overline\Omega)$ be the weak solution of  \eqref{lambd}.
Then for $K\geq \varrho$, $\|w_K\|_{L^{\infty}(\Omega)} \leq C_1 K^{1/(p^--1)}$, $w_K(x) \geq C_2 K^{1/(p_+-1+ \varsigma)} \dist(x, \partial\Omega)$ where $\varsigma \in (0,1)$ and for $K < \varrho$, $\|w_{K}\|_{L^{\infty}(\Omega)} \leq C_3 K^{1/(p^+-1)}$ where $C_1, C_2$ and $C_3$ depends upon $p_+, p_-, N,\ \Omega.$ Moreover if $K_1 < K_2$ then $w_{K_1} \leq w_{K_2}.$ 
\end{thm}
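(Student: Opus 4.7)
\textbf{Proof proposal for Theorem \ref{regi}.} The plan is to prove the four claims separately, exploiting the weak comparison principle from Lemma \ref{CP} and the barrier construction already used in the proof of Lemma \ref{SMPl}.

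First, the monotonicity is essentially immediate. Given $K_1 < K_2$, I would subtract the two weak formulations, test with $(w_{K_1}-w_{K_2})^+\in W^{1,p(x)}_0(\Omega)$, and use the standard monotonicity of $\xi\mapsto |\xi|^{p(x)-2}\xi$ to conclude $(w_{K_1}-w_{K_2})^+\equiv 0$; equivalently one applies Lemma \ref{CP} with $\alpha\equiv 1$ and $\lambda$ chosen so large that $\chi_{u\geq\lambda}\equiv 0$ on both sides. This handles the last assertion.

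For the $L^\infty$ upper bound, I would run a De Giorgi / Moser iteration. Testing the equation against $w_K\geq 0$ gives $\int_\Omega |\nabla w_K|^{p(x)}\,dx = K\int_\Omega w_K\,dx$, and combining H\"older with the embedding $W^{1,1}_0(\Omega)\hookrightarrow L^{N/(N-1)}(\Omega)$ of constant $C_0$ yields $\int_\Omega w_K\,dx\leq C_0 |\Omega|^{1/N}\|\nabla w_K\|_{L^1}$, which explains the appearance of $\varrho = p_-/(2|\Omega|^{1/N}C_0)$ as the break-point at which the linear absorption from the right-hand side balances the $p(x)$-energy on the left. Iterating with test functions $(w_K-t)^+$ on super-level sets (or with $w_K^{s+1}$) and passing through \eqref{a}--\eqref{b} produces the $L^\infty$ bound. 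The dichotomy in the statement reflects precisely the modular--norm duality in $L^{p(x)}$: when the modulus driving the iteration exceeds $1$ (the regime $K\geq\varrho$) the exponent $p_-$ governs the scaling, giving $\|w_K\|_{L^\infty}\leq C_1 K^{1/(p_--1)}$; otherwise $p_+$ governs, giving $\|w_K\|_{L^\infty}\leq C_3 K^{1/(p_+-1)}$.

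The lower bound near the boundary is the most delicate step, and is where the exponent $\tfrac{1}{p_+-1+\varsigma}$ (with some $\varsigma\in(0,1)$) really originates. For each $x_0\in\partial\Omega$, the interior sphere condition $(\mathbf{\Omega})$ furnishes a ball $B(y_0,R)\subset\Omega$ tangent to $\partial\Omega$ at $x_0$. In the annulus $R/2\leq|x-y_0|\leq R$, I would construct a radial subsolution of the form $\psi(x)= a\,j(R-|x-y_0|)$ with $j$ the exponential profile already used in the proof of Lemma \ref{SMPl}, scaled so that $-\Delta_{p(x)}\psi\leq K$ pointwise; the computation of $\mathrm{div}(|\nabla\psi|^{p(x)-2}\nabla\psi)$ in that lemma produces a logarithmic correction in $\nabla p$ which forces the perturbed exponent $p_+-1+\varsigma$ in place of the naive $p_+-1$, with $\varsigma\in(0,1)$ chosen to dominate the term containing $\ln|j'|$ uniformly on the annulus. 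Optimizing $a$ in $K$ yields $a\geq c\,K^{1/(p_+-1+\varsigma)}$, and the weak comparison principle (Lemma \ref{CP}) on the annulus with boundary data $\psi\leq w_K$ (which holds because $w_K>0$ on the inner sphere and $w_K\geq 0 = \psi$ on the outer sphere $\partial\Omega$) gives $\psi\leq w_K$ throughout; reading this inequality along the inward normal direction delivers $w_K(x)\geq C_2 K^{1/(p_+-1+\varsigma)}\mathrm{dist}(x,\partial\Omega)$. The main obstacle is the precise choice of $\varsigma$ absorbing the logarithmic term from the $p(x)$-homogeneity, which is the only place in the proof where the full $C^1$ regularity of $p(\cdot)$ is used.
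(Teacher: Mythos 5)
First, a point of context: the paper does not prove Theorem \ref{regi} at all — it is quoted from Lemma 2.1 of \cite{*31} and Lemma 3.2 of \cite{*10} — so your sketch has to stand on its own merits. The monotonicity step is fine, and your heuristics for where $\varrho$ comes from and for why the exponent degrades to $p_+-1+\varsigma$ (the logarithmic term created by the lack of homogeneity of $\Delta_{p(x)}$ under multiplication by constants) are the right ones. However, the boundary lower bound, which you correctly identify as the delicate step, has a genuine gap as written. You compare $w_K$ on the annulus $R/2\le|x-y_0|\le R$ with $\psi=a\,j(R-|x-y_0|)$ and justify the ordering on the inner sphere by ``$w_K>0$ there''. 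Positivity is not enough: to conclude you must take the amplitude $a$ of order $K^{1/(p_+-1+\varsigma)}$, hence you need the quantitative estimate $\min_{|x-y_0|=R/2}w_K\gtrsim K^{1/(p_+-1+\varsigma)}$, which is precisely an interior bound of the type you are trying to prove; as written the argument is circular. The same issue reappears for points away from the boundary: when $\dist(x,\partial\Omega)$ is of order one, the claimed inequality $w_K(x)\ge C_2K^{1/(p_+-1+\varsigma)}\dist(x,\partial\Omega)$ is an interior lower bound of size $K^{1/(p_+-1+\varsigma)}$, which the annular construction never delivers. The standard repair is to build a single global subsolution: take a profile of the distance function $d(x)$ that increases on $\{d\le\delta\}$, is constant on $\{d\ge\delta\}$ with a $C^1$ matching at $d=\delta$ (so the glued function is a weak subsolution on all of $\Omega$, a constant being trivially a subsolution of $-\Delta_{p(x)}u=K\ge 0$), with amplitude $cK^{1/(p_+-1+\varsigma)}$ chosen so that $-\Delta_{p(x)}\psi\le K$ where $\psi$ is non-constant; then the comparison principle is applied on the whole of $\Omega$, the only boundary information needed being $\psi=0\le w_K$ on $\partial\Omega$, and the bound follows everywhere at once.

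A smaller but still real gap concerns the $L^\infty$ estimates. Testing with $w_K$ and using $W^{1,1}_0(\Omega)\hookrightarrow L^{N/(N-1)}(\Omega)$ does explain the constant $\varrho$ (it is the threshold below which $K\int_\Omega w_K\,dx$ can be absorbed into the modular of $\nabla w_K$ via Young's inequality), but the sentence ``when the modulus driving the iteration exceeds $1$ the exponent $p_-$ governs, otherwise $p_+$'' is an assertion, not an argument: the equivalence between $K\gtrless\varrho$ and the relevant modular being $\gtrless 1$ is not established, and the De Giorgi/Moser iteration must be run keeping explicit track of how $K$ enters at each step in order to extract the precise powers $K^{1/(p^--1)}$ and $K^{1/(p^+-1)}$ (note that the naive rescaling $v=w_K/M$ does not transform the equation cleanly, since $M^{p(x)-1}$ cannot be pulled out of the divergence — the same logarithmic obstruction as in the lower bound). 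So the overall architecture is reasonable and close in spirit to the cited sources, but both quantitative steps need to be completed before this constitutes a proof.
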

\noindent Next we prove a slight extension of Proposition A.2 in \cite{*10}.
\begin{pro}\label{rg1}
Let $p\in C(\bar\Omega)$ and $ q \in (1,p_-]$. Assume $u\in \mathbf W$ satisfying for any $\Psi \in\mathbf W$:
\begin{equation}\label{ws1}
\int_\Omega |\nabla u|^{p(x)-2}\nabla u .\nabla \Psi ~dx=\int_\Omega h u^{q-1} \Psi ~dx
\end{equation}
where $h\in L^2(\Omega)\cap L^{r}(\Omega)$ with $r >\max\{1,\frac{N}{p_-}\}$. Then $u\in L^\infty(\Omega)$.
\end{pro}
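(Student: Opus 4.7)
The strategy is a De Giorgi / Moser iteration on truncations, following closely the argument of Proposition A.2 in \cite{*10} and adapted to the data $h\in L^r(\Omega)$ with $r>N/p_-$. Working with $u^+$ (the case of $u^-$ is symmetric, upon replacing $h$ by $-h$), for every $K\geq 1$ the truncation $v_K\eqdef (u-K)^+$ belongs to $\mathbf{W}$ and is admissible in \eqref{ws1}, giving the basic energy identity
\[
\int_{\{u>K\}} |\nabla u|^{p(x)}\,dx \;=\; \int_{\{u>K\}} h\,u^{q-1}(u-K)\,dx.
\]

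I would then estimate both sides in terms of $v_K$. For the left-hand side, \eqref{a}-\eqref{b} together with the Sobolev embedding $W_0^{1,p(x)}(\Omega)\hookrightarrow L^{p^*(x)}(\Omega)$ (which holds under $(\boldsymbol\Omega)$ and $p\in C^1(\overline\Omega)$) yield a lower bound of the form
\[
c\,\min\bigl\{\|v_K\|_{L^{p^*(x)}}^{p_-},\;\|v_K\|_{L^{p^*(x)}}^{p_+}\bigr\} \;\leq\; \int_{\{u>K\}}|\nabla u|^{p(x)}\,dx.
\]
For the right-hand side, using $u=K+v_K$ on $\{u>K\}$ and $q\leq p_-$, one bounds pointwise $u^{q-1}(u-K)\leq C\,v_K^{\,q}+C\,K^{q-1}v_K$, and then applies the generalized H\"older inequality \eqref{hi} with $h\in L^r(\Omega)$. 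The condition $r>N/p_-$ is used precisely here: it ensures that the H\"older-conjugate exponent falling on $v_K$ is strictly below $p^*(\cdot)$, which translates, after reinvesting $v_K\in L^{p^*(x)}$, into a positive gain $\vartheta>0$ on a factor $|\{u>K\}|^\vartheta$.

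With the level sequence $K_j\eqdef M(2-2^{-j})$, $j\geq 0$, and $v_j\eqdef v_{K_j}$ with $M\geq 1$ to be chosen, Chebyshev's inequality $|\{u>K_{j+1}\}|\leq (K_{j+1}-K_j)^{-q}\int v_j^{\,q}\,dx$ combined with the two estimates above leads to a recursion of the shape
\[
\|v_{j+1}\|_{L^{p^*(x)}}\;\leq\; C\,b^{\,j}\,M^{-\tau}\,\|v_j\|_{L^{p^*(x)}}^{1+\delta},\qquad b>1,\;\tau,\delta>0.
\]
Since $\|v_0\|_{L^{p^*(x)}}=\|(u-M)^+\|_{L^{p^*(x)}}\to 0$ as $M\to\infty$ (by the $L^{p^*(x)}$-integrability coming from $u\in W_0^{1,p(x)}(\Omega)$), choosing $M$ large enough makes the first term of the iteration small enough for a standard Stampacchia / Ladyzhenskaya-Ural'tseva iteration lemma to force $\|v_j\|_{L^{p^*(x)}}\to 0$ as $j\to\infty$, whence $u\leq 2M$ a.e.\ in $\Omega$.

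The main obstacle is the bookkeeping forced by the variable exponent: at each level $K_j$ the equivalences \eqref{a}-\eqref{b} between the modular $\rho_p$ and the Luxemburg norm branch according to whether $\|\nabla v_j\|_{L^{p(x)}}$ is larger or smaller than $1$, so the effective exponent in the recursion keeps flipping between $p_-$ and $p_+$. The iteration is nevertheless designed to drive these norms to zero, after which $p_+$ stabilizes as the dominant exponent and the strict inequality $r>N/p_-$ furnishes the super-linearity $\delta>0$ needed to close the scheme.
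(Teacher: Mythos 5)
Your scheme has a genuine gap at the point where you claim the recursion closes with a superlinear exponent $\delta>0$. Run the bookkeeping: testing with $v_K=(u-K)^+$ and using \eqref{a}--\eqref{b} plus the Sobolev embedding, the left-hand side gives $\|v_K\|_{L^{p^*(x)}}^{p_+}$ in the regime that matters (as $K\to\infty$ the norms are small, so the modular--norm conversion yields the exponent $p_+$, not $p_-$). On the right-hand side, writing $r'=\tfrac{r}{r-1}$ and $(p_-)^*=\tfrac{Np_-}{N-p_-}$, H\"older against $h\in L^r$ followed by interpolation on the level set and Chebyshev gives a total degree in $Y_j=\|v_{K_j}\|$ equal to $\tfrac{(p_-)^*}{r'}$ for \emph{both} terms $v_K^q$ and $K^{q-1}v_K$ (the measure gain $|A_K|^{\vartheta}$ is exactly what brings each degree up to $\tfrac{(p_-)^*}{r'}$, no more). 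So the recursion reads $Y_{j+1}^{p_+}\lesssim b^j M^{-\tau}\,Y_j^{(p_-)^*/r'}$, and superlinearity requires $\tfrac{(p_-)^*}{r'}>p_+$. The hypothesis $r>\max\{1,\tfrac{N}{p_-}\}$ only gives $\tfrac{(p_-)^*}{r'}>p_-$; if the global oscillation of $p$ is large (e.g.\ $N=10$, $p_-=2$, so $(p_-)^*=2.5$, and $p_+=3$), then $\tfrac{(p_-)^*}{r'}<p_+$ for every admissible $r$, the exponent in your recursion is strictly below $1$, and smallness of $Y_0$ does not force $Y_j\to 0$. Your closing paragraph acknowledges the $p_-$/$p_+$ flipping but resolves it in the wrong direction: once the norms are small it is precisely $p_+$ that sits on the left, and $r>N/p_-$ alone does not furnish $\delta>0$. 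A global Stampacchia iteration would need the unassumed gap condition $p_+<(p_-)^*$ (and $r$ large relative to it).

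This is exactly why the paper localizes. Its proof fixes $x_0\in\overline\Omega$ and a small ball $B_R$, sets $p^\pm$ to be the extrema of $p$ on $K_R=B_R\cap\Omega$, and uses continuity of $p$ to choose $R$ with $p^+<(p^-)^*$; then it tests with $\varphi^{p^+}(u-k)^+$ for cutoffs $\varphi$, derives a Caccioppoli inequality with the \emph{constant local} exponents $p^-$ and $(p^-)^*$ (the local gap condition is also used pointwise, e.g.\ to bound $(u-k)^{p(x)}$ by $((u-k)/(s-t))^{(p^-)^*}$ on the split level sets), and closes the iteration through Lemma \ref{fsb1}, which is an extension of the classical Ladyzhenskaya--Ural'tseva lemma accommodating the extra $k^\beta|A_{k,R}|^{p/p^*+\varepsilon}$ term generated by $hu^{q-1}$; compactness of $\overline\Omega$ then yields the global bound. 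To repair your argument you would have to insert this localization step (or an equivalent device controlling the local oscillation of $p$), since the purely global modular/norm framework cannot deliver the needed superlinearity under the stated hypotheses.
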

\noindent First we prove a regularity lemma.  
\begin{Lem}\label{fsb1}
Let $u\in W_0^{1,p}(\Omega)$ satisfying for any  $B_R$, $R<R_0$, and for all $\sigma\in (0,1)$, and
any $k\geq k_0>0$
\begin{equation*}
\begin{split}
\int_{A_{k,\sigma R}} |\nabla u|^p ~dx &\leq C\left[ \int_{A_{k, R}}
\left|\frac{u-k}{R(1-\sigma)}\right|^{p^*}~dx+k^\alpha|A_{k,R}|+ |A_{k,R}|^{\frac{p}{p^*}+\varepsilon}\right.\\
&\left.+k^\beta |A_{k,R}|^{\frac{p}{p^*}+\varepsilon}+\left(\int_{A_{k, R}} \left|\frac{u-k}{R(1-\sigma)}\right|^{p^*}~dx\right)^{\frac{p}{p^*}}|A_{k,R}|^\delta\right]
\end{split}
\end{equation*}
where $A_{k,R}=\{x\in B_R\cap \Omega\ |\ u(x)>k\}$, $0<\alpha<p^*=\frac{Np}{N-p}$, $\beta \in (1,p]$ and $\varepsilon,\,\delta>0$. Then $u\in L^\infty(\Omega)$.
\end{Lem}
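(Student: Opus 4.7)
\textbf{Proof plan for Lemma \ref{fsb1}.}
The natural strategy is a De Giorgi--Stampacchia level-set iteration, which produces a local $L^\infty$ bound on an interior ball of $\Omega$; the conclusion then follows by the standard covering of $\Omega$ (the boundary condition $u\in W^{1,p}_0(\Omega)$ plays no local role, so we may treat boundary and interior balls uniformly by zero-extension). Fix an interior ball $B_R$ with $R<R_0$, and introduce the coupled sequences
\begin{equation*}
k_n \;=\; k\bigl(2-2^{-n}\bigr),\qquad R_n \;=\; \tfrac{R}{2}\bigl(1+2^{-n}\bigr),\qquad \sigma_n \;=\; R_{n+1}/R_n,
\end{equation*}
so that $k_n\uparrow 2k$, $R_n\downarrow R/2$, and $R_n(1-\sigma_n)=R/2^{n+2}$. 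I will take as iteration quantity
\begin{equation*}
V_n \;\eqdef\; \int_{A_{k_n,R_n}} (u-k_n)^{p^*}\,dx .
\end{equation*}

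The core of the argument is to derive a nonlinear recurrence of the form $V_{n+1}\le C\,b^{n}\, V_n^{1+\mu}$ for some $\mu>0$ and $b>1$, valid whenever $k$ is larger than a threshold $k^*$ to be fixed. To that end I combine two ingredients. First, applying the Sobolev embedding $W^{1,p}_0(B_{R_n})\hookrightarrow L^{p^*}(B_{R_n})$ to the truncation $(u-k_{n+1})^+\eta_n$ with a standard cutoff $\eta_n$ supported in $B_{R_n}$, identically $1$ on $B_{R_{n+1}}$, $|\nabla \eta_n|\le C/(R_n-R_{n+1})$, one obtains
\begin{equation*}
V_{n+1}^{\,p/p^*}\le C\int_{A_{k_{n+1},R_n}}\!\!|\nabla u|^p\,dx + C\int_{A_{k_{n+1},R_n}}\!\!\Bigl|\tfrac{u-k_{n+1}}{R_n(1-\sigma_n)}\Bigr|^{p}\,dx .
\end{equation*}
Second, the hypothesis at level $k_{n+1}$, radius $R_n$ and parameter $\sigma_n$ bounds the $|\nabla u|^p$-integral by the five explicit terms on the right-hand side. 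The remaining pieces are then controlled by the basic Chebyshev-type observation
\begin{equation*}
|A_{k_{n+1},R_n}| \;\le\; \frac{1}{(k_{n+1}-k_n)^{p^*}}\int_{A_{k_n,R_n}}(u-k_n)^{p^*}\,dx \;=\; \frac{2^{(n+1)p^*}}{k^{p^*}}\,V_n,
\end{equation*}
together with $(u-k_{n+1})\le (u-k_n)$ on $A_{k_{n+1},R_n}$, which lets me replace the truncated $L^{p^*}$ integrals at height $k_{n+1}$ by $V_n$, while each factor $|A_{k_{n+1},R_n}|^{\theta}$ contributes a power $V_n^{\theta}$ (and a harmless $2^{n}$ factor). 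The terms $k^\alpha|A|$, $|A|^{p/p^*+\varepsilon}$, $k^\beta|A|^{p/p^*+\varepsilon}$ thus become powers of $V_n$ strictly greater than $V_n^{p/p^*}$, the critical exponent; since $\alpha<p^*$ and $\beta\le p$, each exponent of $k$ is absorbed by a compensating negative power of $k$ coming from the Chebyshev bound, and so the full estimate can be cast as $V_{n+1}\le C\, b^n\bigl(k^{-\gamma_1}V_n^{1+\mu_1}+\cdots+k^{-\gamma_5}V_n^{1+\mu_5}\bigr)$ with $\gamma_i,\mu_i>0$.

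At this point the classical iteration lemma (e.g.\ Lemma~7.1 in Giusti's book, or Lemma~4.7 in \cite{*10}) applies: there exists a threshold $k^*=k^*(V_0,C,b,\mu_i,\gamma_i)$ such that whenever $k\ge k^*$ one has $V_n\to 0$, hence $u\le 2k$ a.e.\ in $B_{R/2}$. Since $V_0\le \int_\Omega (u-k_0)_+^{p^*}\,dx\to 0$ as $k_0\to\infty$ because $u\in L^{p^*}_{\mathrm{loc}}$ (Sobolev embedding of $W^{1,p}_0$), the threshold condition can indeed be met. Finally I cover $\overline\Omega$ by finitely many balls $B_{R/2}$ to obtain $u\in L^\infty(\Omega)$. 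The main obstacle I anticipate is purely bookkeeping: one has to verify that each of the five terms produced by the hypothesis contributes to the recurrence with an exponent strictly greater than $1$ on $V_n$ \emph{and} a strictly positive negative power of $k$ (so that the threshold $k^*$ exists); this is where the constraints $\alpha<p^*$, $\beta\le p$, $\varepsilon>0$, $\delta>0$ enter decisively.
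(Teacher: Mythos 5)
Your plan is, in substance, exactly the paper's proof: a De Giorgi level-set iteration in which the Sobolev inequality applied to a cut-off truncation is combined with the structural hypothesis to absorb the gradient term, the Chebyshev bound $|A_{k_{n+1},R_n}|\le (k_{n+1}-k_n)^{-p^*}V_n$ converts the measure factors (and, since $\alpha<p^*$ and $\beta\le p$, the powers of $k$) into powers of $V_n$ with exponents strictly above the critical one, and the classical fast-convergence lemma (the paper invokes Lemma 4.7, Ch.~2 of Ladyzhenskaya--Ural'tseva) finishes, the starting smallness $V_0\to 0$ being obtained by taking $k$ large because $u\in L^{p^*}(\Omega)$; the boundary is handled as you say, via $u\in W^{1,p}_0$ and a finite covering of $\overline\Omega$.

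One bookkeeping step does not close as written. Your Sobolev display produces the gradient integral over $A_{k_{n+1},R_n}$ (the support of $\eta_n$), whereas the hypothesis ``at radius $R_n$ and parameter $\sigma_n$'' only controls $\int_{A_{k_{n+1},\sigma_n R_n}}|\nabla u|^p=\int_{A_{k_{n+1},R_{n+1}}}|\nabla u|^p$, i.e.\ the gradient on the \emph{smaller} ball, so the two ingredients do not interlock. The standard repair, and the one the paper uses (this is the role of the intermediate radius $\tilde r_j$ there), is to work with three nested radii per step: take $\eta_n\equiv 1$ on $B_{R_{n+1}}$ but supported in $B_{\rho_n}$ with $R_{n+1}<\rho_n<R_n$ (say the midpoint), and apply the hypothesis with the pair (big radius $R_n$, $\sigma=\rho_n/R_n$); the increments $R_n-\rho_n$ and $\rho_n-R_{n+1}$ stay comparable to $R\,2^{-n}$, so the rest of your computation is unchanged. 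Two smaller remarks: the recurrence you obtain is not of the form $\sum_i k^{-\gamma_i}V_n^{1+\mu_i}$ with \emph{all} $\gamma_i>0$ --- the term coming from $\int\bigl|\frac{u-k}{R(1-\sigma)}\bigr|^{p^*}$ and the cutoff term carry no negative power of $k$ --- but this is harmless since the smallness needed for the iteration comes from $V_0\to 0$ as $k\to\infty$, not from the factors $k^{-\gamma_i}$; and the cutoff term $\int_A\bigl(\frac{u-k_{n+1}}{R_n(1-\sigma_n)}\bigr)^{p}$ still needs a one-line treatment (H\"older against $|A|^{1-p/p^*}$, or splitting according to $u-k_{n+1}\gtrless 1$) to be expressed through $V_n$ and $|A|$, as in the paper.
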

\begin{proof}
A similar result exists in \cite{FS} or in \cite{GTW} without the term $k^\beta |A_{k,R}|^{\frac{p}{p^*}+\varepsilon}$. For the reader's convenience, we include the complete proof.\\
Let $x_0\in  \overline\Omega$, $B_R$ the ball centred in $x_0$. We define $K_R \eqdef  B_R\cap \Omega$ and we set 
$$r_j=\frac{R}{2}+\frac{R}{2^{j+1}}, \quad \tilde r_j=\frac{r_j+r_{j+1}}{2} \ \mbox{ and } \ k_j=k\left(1-\frac{1}{2^{j+1}} \right) \ \ \mbox{for any $j\in \N$}.$$
Define also
$$I_j=\int_{A_{k_j,r_j}} |u(x)-k_j|^{p^*}~dx \quad \mbox{and} \quad \varphi(t)=\left\{
\begin{array}{l l}
1 & \mbox{if}\ 0\leq t\leq \frac12,\\
0 & \mbox{if}\ t\geq \frac34
\end{array}
\right.\
$$
satisfying $\varphi\in C^1([0,+\infty))$ and $0\leq \varphi \leq 1$. We set $\varphi_j(x)=\varphi\left(\frac{2^{j+1}}{R}(|x|-\frac{R}{2})\right)$. Hence $\varphi_j=1$ on $B_{r_{j+1}}$ and $\varphi_j=0$ on $\R^N\backslash B_{\tilde r_{j+1}}$.\\
We have
\begin{equation*}
\begin{split}
I_{j+1}&=\int_{A_{k_{j+1},r_{j+1}}} |u(x)-k_{j+1}|^{p^*}~dx=\int_{A_{k_{j+1},r_{j+1}}} |u(x)-k_{j+1}|^{p^*}\varphi_j(x)^{p^*}~dx\\
&\leq \int_{K_R} (u(x)-k_{j+1})^+\varphi_j(x))^{p^*}~dx.
\end{split}
\end{equation*}
Since $u\in W^{1,p}_0(\Omega)$, $(u-k_{j+1})^+\varphi_j\in W^{1,p}_0(K_R)$,
\begin{equation*}
\begin{split}
I_{j+1}&{\lesssim} \left(\int_{K_R} |\nabla((u-k_{j+1})^+\varphi_j)|^{p}~dx\right)^{\frac{p^*}{p}}\\
&\lesssim \left(\int_{A_{k_{j+1},\tilde r_{j}}} |\nabla u|^{p}~dx+\int_{A_{k_{j+1},\tilde r_{j}}}(u-k_{j+1})^{p}~dx\right)^{\frac{p^*}{p}}
\end{split}
\end{equation*}
where we use the notation $f\lesssim g$ in the sense there exists a constant $c>0$ such that $f\leq c g$. Since $\tilde r_j<r_j$, we have
\begin{equation}\label{Jh1}
\begin{split}
I_{j+1}&\lesssim \left(2^{jp^*}\int_{A_{k_{j+1},r_{j}}}|u-k_{j+1}|^{p^*}~dx +k_{j+1}^\alpha|A_{k_{j+1},r_{j}}|+|A_{k_{j+1},r_{j}}|^{\frac{p}{p^*}+\varepsilon}\right.\\
&\left.\ \ \ \ \ \ \ +k_{j+1}^\beta|A_{k_{j+1},r_{j}}|^{\frac{p}{p^*}+\varepsilon}+2^{jp}\left(\int_{A_{k_{j+1},r_{j}}}|u-k_{j+1}|^{p^*}~dx\right)^{\frac{p}{p^*}}|A_{k_{j+1},r_{j}}|^\delta\right.\\
&\left.\ \ \ \ \ \ \ \ +{\int_{A_{k_{j+1},r_{j}}}|u-k_{j+1}|^{p^*}~dx}\right)^{\frac{p^*}{p}}.
\end{split}
\end{equation}
Moreover, for any $j$, $k_j\leq k_{j+1}$, this implies
\begin{equation*}
I_j\geq \int_{A_{k_{j+1},r_{j}}}|u-k_{j}|^{p^*}~dx\geq \int_{A_{k_{j+1},r_{j}}}|k_j-k_{j+1}|^{p^*}~dx=|A_{k_{j+1},r_{j}}\|k_{j+1}-k_j|^{p^*}.
\end{equation*}
Then, for any $k>k_0$ and $j\in \N$
$$  |A_{k_{j+1},r_{j}}|+k_{j+1}^{p^*}|A_{k_{j+1},r_{j}}|\lesssim 2^{jp^*}I_j$$
where the constant in the notation depends only on $k_0$, $p$ and $\alpha$. From the previous inequality, we deduce
$$k_{j+1}^{\beta}|A_{k_{j+1},r_{j}}|^{\frac{p}{p^*}+\varepsilon}\leq k_{j+1}^{p+\varepsilon p^*}|A_{k_{j+1},r_{j}}|^{\frac{p}{p^*}+\varepsilon}\lesssim 2^{j(p+\varepsilon p^*)}I_j^{\frac{p}{p^*}+\varepsilon}.$$
Replacing in \eqref{Jh1}, we obtain
\begin{equation}\label{Jh2}
I_{j+1} \lesssim \left(2^{jp^*}I_j+ 2^{j(p+\varepsilon p^*)}I_j^{\frac{p}{p^*}+\varepsilon}+2^{j(p+\delta p^*)}I_j^{\frac{p}{p^*}+\delta}\right)^{\frac{p^*}{p}}.
\end{equation}
Setting $M=\frac{p}{p^*}\max\{p^*,\, p+\varepsilon p^*,\, p+\delta p^*\}$ and $\theta=\min\{1-\frac{p}{p^*},\,\varepsilon,\,\delta\}$ and noting 
$$I_j\leq \int_{K_R} (|u-k_j|^+)^{p^*}~dx\leq \int_{K_R}|u|^{p*}\leq \|u\|_{W^{1,p}_0}^{p*},$$
\eqref{Jh2} becomes
\begin{equation*}
I_{j+1}\lesssim 2^{jM}I_j^{1+\frac{\theta p^*}{p}}
\end{equation*}
where the constant depends on $\|u\|_{W^{1,p}_0}$, $k_0$, $\alpha$ and $p$. We conclude with Lemma 4.7 in Chapter 2 of \cite{LU}.\\
For this it suffices to prove that $I_0$ is small enough. Indeed $u\in L^{p^*}(\Omega)$ implies
$$I_0=\int_{A_{\frac{k}{2},R}} |u-\frac{k}{2}|^{p^*}~dx \to 0 \quad\mbox{ as }\quad k\to \infty.$$
Hence for $k$ large enough, $I_0\leq C^{-\frac1\eta}(2^M)^{-\frac{1}{\eta^2}}$ with $\eta=\frac{\theta p^*}{p}$. Thus $I_j$ converges to $0$ as $j\to +\infty$ and 
$$\int_{A_{k,\frac{R}{2}}} |u-k|^{p^*}~dx=0.$$
We deduce that $u\leq k$ on $K_{\frac{R}{2}}$. In the same way, we prove that $-u\leq k$ on $K_{\frac{R}{2}}$. \\
Since $\overline \Omega$ is compact, we conclude that $u\in L^\infty(\Omega)$.
\end{proof}
\noindent \textbf{Proof of Proposition \ref{rg1}:} We follow the idea of the proof of Theorem 4.1 in \cite{*3}.\\
Let $x_0\in \overline\Omega$, $B_{R}$ the ball of radius $R$ centered in $x_0$ and $K_R \eqdef  \Omega\cap B_R$. We define
$$p^+ \eqdef  \max_{K_{R}} p(x) \quad \mbox{and} \quad p^- \eqdef  \min_{K_{R}} p(x)$$
and we choose $R$ small enough such that $p^+< (p^-)^*$
where $$(p^-)^* \eqdef \left\{\begin{array}{l l}
\frac{Np^-}{N-p^-} & \mbox{if } p^-< N,\\
p^+ +1 & \mbox{if }p^- \geq N.
\end{array}\right.$$ 
Fix $(s,t)\in (\R_+^*)^2$, $t<s<R$ then  $K_t\subset K_s\subset K_R$. Define $\varphi\in
C^\infty(\Omega)$, $0\leq \varphi\leq 1$ such that
$$\varphi=\left\{\begin{array}{l l}
1 & \mbox{in } B_t,\\
0 & \mbox{in }\R^N\backslash B_s
\end{array}\right.$$
{satisfying $|\nabla\varphi|\lesssim 1/(s-t)$}. Let $k\geq 1$, using the same notations as previously $A_{k,\lambda}=\{y\in K_\lambda \ | \ u(y)>k\}$ and taking $\Psi=\varphi^{p^+}(u-k)^+\in W^{1,p(x)}_0(\Omega)$ in \eqref{ws1}, we obtain
\begin{equation}\label{B3}
\begin{split}
\int_{A_{k,s}} |\nabla u|^{p(x)}\varphi^{p^+}\,~dx &+p^+\int_{A_{k,s}} |\nabla u|^{p(x)-2}\nabla u\cdot\nabla \varphi
\varphi^{p^+-1}(u-k)^+\,~dx\\
&\ \ \ \ \ =\int_{A_{k,s}} hu^{q-1}\varphi^{p^+}(u-k)\,~dx.
\end{split}
\end{equation}
{Hence} by Young inequality, for $\epsilon>0$, we have
\begin{equation*}
\begin{split}
p^+\int_{A_{k,s}} |\nabla u|^{p(x)-2}\nabla u\cdot\nabla \varphi \varphi^{p^+-1}(u-k)\,~dx&\leq \varepsilon \int_{A_{k,s}}
|\nabla
u|^{p(x)}\varphi^{(p^+-1)\frac{p(x)}{p(x)-1}}\,~dx\\
&+c\varepsilon^{-1}\int_{A_{k,s}}
(u-k)^{p(x)}|\nabla\varphi|^{p(x)}\,~dx.
\end{split}
\end{equation*}
Since $|\nabla\varphi|\leq c/(s-t)$ and for any $x\in K_R$, $p^+\leq (p^+-1)\frac{p(x)}{p(x)-1}$, we have
$\varphi^{(p^+-1)\frac{p(x)}{p(x)-1}}\leq \varphi^{p^+}$. This implies
\begin{equation}\label{est1}
\begin{split}
p^+\int_{A_{k,s}} |\nabla u|^{p(x)-2}\nabla u.\nabla \varphi \varphi^{p^+-1}(u-k)\,~dx &\leq \varepsilon \int_{A_{k,s}} |\nabla
u|^{p(x)}\varphi^{p^+}\,~dx \\
&+ c\varepsilon^{-1}\int_{A_{k,s}} \left(\frac{u-k}{s-t}\right)^{p(x)}\,~dx.
\end{split}
\end{equation}
Using H\"older inequality, we estimate the right-hand side of \eqref{B3} as follows:
\begin{equation*}
\int_{A_{k,s}} hu^{q-1}\varphi^{p^+}(u-k)\,~dx\leq \|h\|_{L^r}\left(\int_{A_{k,s}}
u^{\frac{r(q-1)}{r-1}}(u-k)^{\frac{r}{r-1}}\,~dx\right)^{\frac{r-1}{r}}.
\end{equation*}
Since $r>\frac{N}{p^-}$, we have $\frac{(p^-)^*}{p^-}\frac{r-1}{r}>1$, applying once again the H\"older inequality and the Young inequality, we
obtain
\begin{equation*}
\int_{A_{k,s}} hu^{q-1}\varphi^{p^+}(u-k)\,~dx\lesssim \left(\int_{A_{k,s}}
u^{\frac{q(p^-)^*}{p^-}}\,~dx +\int_{A_{k,s}}(u-k)^{\frac{q(p^-)^*}{p^-}}\,~dx\right)^{\frac{p^-}{(p^-)^*}}|A_{k,s}|^\delta
\end{equation*}
where $\delta= \frac{r-1}{r}-\frac{p^-}{(p^-)^*}>0$.\\
Set $A_{k,s,t}=\{x\in A_{k,s}\ |\ u(x)-k>s-t\}$ and
its complement as $A^c_{k,s,t}$. Now we split the integrals in the right-hand side of \eqref{est3} as follows:
\begin{equation}\label{est3}
\begin{split}
\int_{A_{k,s,t}} \left(\frac{u-k}{s-t}\right)^{\frac{q(p^-)^*}{p^-}}(s-t)^{\frac{q(p^-)^*}{p^-}}\,~dx &+
\int_{A^c_{k,s,t}} \left(\frac{u-k}{s-t}\right)^{\frac{q(p^-)^*}{p^-}}(s-t)^{\frac{q(p^-)^*}{p^-}}\,~dx\\
&\lesssim \int_{A_{k,s}} \left(\frac{u-k}{s-t}\right)^{(p^-)^*}\,~dx+|A_{k,s}| \eqdef \mathcal I
\end{split}
\end{equation}
since $q<p_-$ and we also have
\begin{equation*}
\int_{A_{k,s}} u^{\frac{q(p^-)^*}{p^-}}\,~dx\lesssim \int_{A_{k,s}} (u-k)^{\frac{q(p^-)^*}{p^-}}+k^{\frac{q(p^-)^*}{p^-}}\,~dx\lesssim \mathcal I+ k^{\frac{q(p^-)^*}{p^-}}|A_{k,s}|.
\end{equation*} 
In the same way, the second term in the right-hand side of \eqref{est1} can be estimated as follows:
\begin{equation}\label{est4}
\int_{A_{k,s}\cap A_{k,s,t}} \left(\frac{u-k}{s-t}\right)^{p(x)}\,~dx +\int_{A_{k,s}\cap A^c_{k,s,t}}
\left(\frac{u-k}{s-t}\right)^{p(x)}\,~dx \lesssim \mathcal I.
\end{equation}
Finally plugging \eqref{est1}-\eqref{est4}, we obtain for $\varepsilon$ small enough
\begin{equation*}
\int_{A_{k,s}} |\nabla u|^{p(x)}\varphi^{p^+}\,~dx\lesssim  \mathcal I + |A_{k,s}|^\delta(\mathcal I+ k^{\frac{q(p^-)^*}{p^-}}|A_{k,s}|)^{\frac{p^-}{(p^-)^*}}
\end{equation*}
where the constant depends on $p,\,R$ and $\varepsilon$. Moreover we have
\begin{align*}
(\mathcal I+k^{\frac{q(p^-)^*}{p^-}}|A_{k,s}|)^{\frac{p^-}{(p^-)^*}} &\lesssim \left(\int_{A_{k,s}}
\left(\frac{u-k}{s-t}\right)^{(p^-)^*}\,~dx\right)^{\frac{p^-}{(p^-)^*}}\\
&+|A_{k,s}|^{\frac{p^-}{(p^-)^*}} +k^{q}|A_{k,s}|^{\frac{p^-}{(p^-)^*}}.
\end{align*}
To conclude, using the Young inequality, we obtain the following estimate:
\begin{equation*}
\begin{split}
\int_{A_{k,t}} &|\nabla u|^{p^-}\,~dx\leq\int_{A_{k,s}} |\nabla u|^{p(x)}\varphi^{p^+}\,~dx
\lesssim \int_{A_{k,s}} \left(\frac{u-k}{s-t}\right)^{(p^-)^*}\,~dx + 2|A_{k,s}|\\
&+(1+k^q)|A_{k,s}|^{\frac{p^-}{(p^-)^*}+\delta} +|A_{k,s}|^\delta\left(\int_{A_{k,s}}
\left(\frac{u-k}{s-t}\right)^{(p^-)^*}\,~dx\right)^{\frac{p^-}{(p^-)^*}}.
\end{split}
\end{equation*}
By Lemma \ref{fsb1}, we deduce that $u$ bounded in $\Omega$. 
$\hfill \square$\ \\
\ \\
Combining Theorem 4.1 of \cite{*3} and Proposition \ref{rg1}, we have the following corollary:
\begin{cor}\label{reg3}
Let $p\in C(\bar\Omega)$ and $q\in (1,p_-]$. Assume $u\in \mathbf W$ and nonnegative satisfying for any $\Psi \in \mathbf W$, $\Psi\geq 0$,
\begin{equation*}
\int_\Omega u^{2q-1}\Psi ~dx+\int_\Omega |\nabla u|^{p(x)-2}\nabla u \cdot\nabla \Psi ~dx\leq\int_\Omega(f(x,u)+hu^{q-1}) \Psi ~dx 
\end{equation*}
where $f$ verifies for any $(x,t)\in \Omega\times \R^+$, $|f(x,t)|\leq c_1+c_2|t|^{s(x)-1}$ with $s\in C(\overline\Omega)$ such that for any $x\in\overline \Omega$, $1<s(x)<p^*(x)$ and $h\in L^2(\Omega)\cap L^{r}(\Omega)$ with $r>\max\{1,\frac{N}{p_-}\}$. Then $u\in L^\infty(\Omega)$.
\end{cor}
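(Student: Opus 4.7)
The plan is to reduce the proof to a direct combination of the arguments of Theorem 4.1 in \cite{*3} (which handles subcritical $f$-type nonlinearities) and of Proposition \ref{rg1} (which handles the $hu^{q-1}$ term through integrability of $h$). Since $u\geq 0$, the first thing I would do is drop the nonnegative term $\int_\Omega u^{2q-1}\Psi\,dx$ on the left-hand side, reducing the problem to the weaker inequality
$$\int_\Omega |\nabla u|^{p(x)-2}\nabla u\cdot\nabla\Psi\,dx\leq \int_\Omega \bigl(f(x,u)+hu^{q-1}\bigr)\Psi\,dx\qquad\forall\,\Psi\in\mathbf W,\ \Psi\geq 0.$$

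On this inequality I would run a De~Giorgi truncation exactly as in the proof of Proposition \ref{rg1}. Fix $x_0\in\overline\Omega$, a small ball $B_R$ around $x_0$ with $p^+=\max_{K_R}p$, $p^-=\min_{K_R}p$, and $R$ small enough that both $p^+$ and $\max_{\overline\Omega}s$ lie strictly below $(p^-)^*$. For $k\geq 1$ and a smooth cutoff $\varphi$ between two balls $B_t\subset B_s\subset B_R$, test with $\Psi=\varphi^{p^+}(u-k)^+\in\mathbf W$. The gradient term and its local absorption are controlled verbatim by the Young-inequality step from the proof of Proposition \ref{rg1}. The new feature is the right-hand side, which now has two contributions: the $f$-contribution, bounded via $|f(x,t)|\leq c_1+c_2 t^{s(x)-1}$ with $s(x)<p^*(x)$, is estimated exactly as in Theorem 4.1 of \cite{*3} and yields terms of the form $C\bigl(\mathcal{I}+k^{\alpha}|A_{k,s}|+|A_{k,s}|^{p^-/(p^-)^*+\varepsilon}\bigr)$ for some $\alpha<(p^-)^*$ and $\varepsilon>0$; the $hu^{q-1}$-contribution, bounded via H\"older with $h\in L^r$, $r>N/p_-$, and $q\leq p_-$, produces (as in Proposition \ref{rg1}) a term of the form $C|A_{k,s}|^\delta\bigl(\mathcal{I}+k^q|A_{k,s}|\bigr)^{p^-/(p^-)^*}$ with $\delta=\tfrac{r-1}{r}-\tfrac{p^-}{(p^-)^*}>0$, where $\mathcal{I}=\int_{A_{k,s}}\bigl|\tfrac{u-k}{s-t}\bigr|^{(p^-)^*}dx+|A_{k,s}|$.

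Summing the two estimates puts the resulting inequality in precisely the form required by Lemma \ref{fsb1}, with $\beta=\max\{\alpha,q\}$ and the exponents $\varepsilon$ and $\delta$ supplied respectively by the subcriticality of $s$ and the integrability of $h$. Lemma \ref{fsb1} then yields a local $L^\infty$ bound near $x_0$, and a finite covering of the compact set $\overline\Omega$ gives $u\in L^\infty(\Omega)$. The main obstacle is purely bookkeeping: one must check that both RHS contributions produce the exact functional shape accepted by Lemma \ref{fsb1}, and in particular that the $k$-exponents $\alpha$ and $q$ are both strictly smaller than $(p^-)^*$ — this is where the two hypotheses $s(x)<p^*(x)$ and $q\leq p_-<(p^-)^*$ intervene, and they ensure that no modification of the iteration machinery of Proposition \ref{rg1} is needed beyond the extra term accounting already incorporated in Lemma \ref{fsb1}.
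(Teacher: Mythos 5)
Your proposal is essentially the paper's own argument: the paper establishes the corollary precisely by combining Theorem 4.1 of \cite{*3} with Proposition \ref{rg1}, i.e.\ dropping the nonnegative $u^{2q-1}$ term, running the De Giorgi truncation of Proposition \ref{rg1} with the $f$-contribution controlled by the subcritical growth of $s$ and the $hu^{q-1}$-contribution controlled by $h\in L^r$, and concluding with Lemma \ref{fsb1}. Two bookkeeping corrections: choose $R$ so that the \emph{local} maximum $\max_{K_R}s$ (not $\max_{\overline\Omega}s$) is below $(p^-)^*$, and in Lemma \ref{fsb1} assign the exponents as $\alpha=\max_{K_R}s<(p^-)^*$ for the term $k^\alpha|A_{k,R}|$ and $\beta=q\leq p^-$ for the term $k^\beta|A_{k,R}|^{p/p^*+\varepsilon}$, rather than $\beta=\max\{\alpha,q\}$, which could violate the constraint $\beta\leq p$ when $\max_{K_R}s>p^-$.
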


\end{document}